\newtheorem{theo}{Theorem}[subsection]
\newtheorem*{thm}{Theorem}
\newtheorem{defi}[theo]{Definition}
\newtheorem{lem}[theo]{Lemma}
\newtheorem{rem}[theo]{Remark}
\newtheorem{prop}[theo]{Proposition}
\newtheorem{cor}[theo]{Corollary}
\newtheorem{ex}[theo]{Example}
\newcommand{\nc}{\newcommand}
\nc{\on}{\operatorname}
\nc{\C}{\mathbb{C}}
\nc{\R}{\mathbb{R}}
\nc{\Q}{\mathbb{Q}}
\nc{\Z}{\mathbb{Z}}
\nc{\N}{\mathbb{N}}
\nc{\bfa}{\mathbf{a}}
\nc{\bfi}{\mathbf{i}}
\nc{\clA}{\mathcal{A}}
\nc{\clB}{\mathcal{B}}
\nc{\clL}{\mathcal{L}}
\nc{\g}{\mathfrak{g}}
\nc{\frS}{\mathfrak{S}}
\nc{\frsl}{\mathfrak{sl}}
\nc{\aff}{\mathrm{aff}}
\nc{\inv}{^{-1}}
\nc{\qu}{\quad}
\nc{\qqu}{\qquad}
\nc{\la}{\langle}
\nc{\ra}{\rangle}
\nc{\Ker}{\on{Ker}}
\nc{\im}{\on{Im}}
\nc{\Hom}{\on{Hom}}
\nc{\End}{\on{End}}
\nc{\Span}{\on{Span}}
\nc{\id}{\on{id}}
\nc{\GT}{\on{GT}}
\nc{\AP}{\on{AP}}
\nc{\RE}{\on{RE}}
\nc{\wt}{\on{wt}}
\nc{\SST}{\on{SST}}
\nc{\Br}{\on{Br}}
\nc{\YB}{\on{YB}}
\nc{\str}{\on{string}}
\nc{\lex}{\on{lex}}
\nc{\ol}{\overline}
\nc{\ul}{\underline}
\nc{\hf}{\frac{1}{2}}
\nc{\vphi}{\varphi}
\nc{\vpi}{\varpi}
\nc{\vep}{\varepsilon}
\nc{\lm}{\lambda}
\nc{\Lm}{\Lambda}
\nc{\eps}{\epsilon}
\nc{\IF}{\text{ if }}
\nc{\AND}{\text{ and }}
\nc{\OW}{\text{ otherwise}}
\nc{\lowerterms}{\text{(lower terms)}}
\nc{\higherterms}{\text{(higher terms)}}
\nc{\Forall}{\text{ for all }}
\nc{\Forsome}{\text{ for some }}
\nc{\For}{\text{ for }}
\nc{\plim}[1][]{\mathop{\varprojlim}\limits_{#1}}
\nc{\ilim}[1][]{\mathop{\varinjlim}\limits_{#1}}
\nc{\Etil}{\widetilde{E}}
\nc{\Ftil}{\widetilde{F}}
\nc{\Itil}{\widetilde{I}}
\nc{\Ltil}{\widetilde{L}}
\title{Alcove paths and Gelfand-Tsetlin patterns}
\author[H. Watanabe and K. Yamamura]{Hideya Watanabe and Keita Yamamura}
\address{(H. Watanabe) Research Institute for Mathematical Sciences, Kyoto University, Kyoto 606-8052, Japan}
\email{hideya@kurims.kyoto-u.ac.jp}
\address{(K. Yamamura) Gifu Prefectural Kaizu Meisei High School, 11-1 Takasu, Kaizu, Gifu 503-0653, Japan}
\email{k-yamamura@ist.osaka-u.ac.jp}
\subjclass[2010]{Primary~05E10; Secondary~17B10}
\keywords{Alcove paths model, Gelfand-Tsetlin pattern, Crystal, String data}
\date{\today}
\begin{document}
\maketitle

\begin{abstract}
In their study of the equivariant K-theory of the generalized flag varieties $G/P$, where $G$ is a complex semisimple Lie group, and $P$ is a parabolic subgroup of $G$, Lenart and Postnikov introduced a combinatorial tool, called the alcove path model. It provides a model for the highest weight crystals with dominant integral highest weights, generalizing the model by semistandard Young tableaux. In this paper, we prove a simple and explicit formula describing the crystal isomorphism between the alcove path model and the Gelfand-Tsetlin pattern model for type $A$.
\end{abstract}


\section{Introduction}
Lenart and Postnikov \cite{LP07} provided a Chevalley-type formula for the equivariant $K$-theory of generalized flag varieties $G/P$, where $G$ is a complex semisimple Lie group, and $P$ is a parabolic subgroup of $G$. Their formula is based on a combinatorial model for the highest weight crystals, called the alcove path model, which was also introduced by them. To be more specific, let $\lm$ be a dominant integral weight, and $A_\circ$ the fundamental alcove. Then, Lenart-Postnikov's Chevalley-type formula tells us that the product of a Schubert class and the class of the line bundle $\clL_\lm$ corresponding to $-\lm$ in the equivariant $K$-theory of $G/P$ is determined by counting the number of certain subsequences (called the admissible subsets) of a fixed sequence of adjacent alcoves (called a reduced alcove path) from $A_\circ$ to $A_\circ - \lm$.

Their formula has applications in the representation theory of $G$, and its Lie algebra $\g$. For each dominant integral weight $\lm$, there exists a unique irreducible highest weight module $V(\lm)$ of highest weight $\lm$. For each integral weight $\mu$, let $V(\lm)_\mu$ denote the weight space of weight $\mu$. The characters $s_\lm := \sum_{\mu} (\dim V(\lm)_\mu) e^\mu$, where the sum runs over the integral weights $\mu$, and $e^\mu$ is the standard basis of the group algebra of the integral weight lattice, play important roles in the representation theory of $\g$. To each admissible subset $J$, an integral weight $\wt(J)$, called its weight, is assigned. Then, Lenart and Postnikov proved the following character formula:
$$
s_\lm = \sum_J e^{\wt(J)},
$$
where the sum runs over the admissible subsets. Since this formula is cancellation-free, the set of admissible subsets (resp., admissible subsets of weight $\mu$) is in one-to-one correspondence with any basis of $V(\lm)$ (resp., $V(\lm)_\mu$).

The irreducible highest weight module $V(\lm)$ has a distinguished basis $B(\lm)$; Lusztig's canonical basis \cite{Lu90}, or Kashiwara's global crystal basis \cite{K91}. In fact, the canonical basis, or global crystal basis is a basis of the irreducible highest weight module $V_q(\lm)$ of highest weight $\lm$ over the quantum group $U_q(\g)$. Taking the limit $q \rightarrow 0$, we obtain the crystal basis $\clB(\lm)$. Although $\clB(\lm)$ is no longer a basis of $V_q(\lm)$ nor $V(\lm)$, it parametrizes the basis elements of $V(\lm)$. Moreover, to each element $b \in \clB(\lm)$, an integral weight $\wt(b)$ is assigned, and we have $s_\lm = \sum_{b \in \clB(\lm)} e^{\wt(b)}$. Hence, there should exist a natural bijection between the set of admissible subsets and $\clB(\lm)$ which preserves the weights.

As we have mentioned above, the crystal basis $\clB(\lm)$ is the limit at $q \rightarrow 0$ of $B(\lm)$, which is a genuine basis of $V_q(\lm)$. Hence, for each $x \in B(\lm)$, the products $E_i x, F_i x \in V_q(\lm)$ make sense, where $E_i,F_i$, $i \in I$ are Chevalley generators of $U_q(\g)$. Kashiwara \cite{K90} defined operators $\Etil_i,\Ftil_i : \clB(\lm) \rightarrow \clB(\lm) \sqcup \{0\}$ which are, roughly speaking, the limits of the actions of $E_i,F_i$ at $q \rightarrow 0$. These maps equip the crystal basis $\clB(\lm)$ with a combinatorial structure, called a crystal structure.

In \cite{LP08}, Lenart and Postnikov defined operators $\Etil_i,\Ftil_i$ on the set of admissible subsets, and proved that these operators together with the map $\wt$ give rise to a structure of crystal isomorphic to $\clB(\lm)$. One feature of the crystal structure of $\clB(\lm)$ is the existence of the highest weight vector. Namely, there exists a special element $b_\lm \in \clB(\lm)$ such that
$$
\clB(\lm) = \{ \Ftil_{i_1} \cdots \Ftil_{i_l}(b_\lm) \mid l \in \Z_{\geq 0},\ i_1,\ldots,i_l \in I \} \setminus \{0\}.
$$
Hence, there exists a special admissible subset $J_\lm$ such that the map $\Ftil_{i_1} \cdots \Ftil_{i_l}(J_\lm) \mapsto \Ftil_{i_1} \cdots \Ftil_{i_l}(b_\lm)$ gives a weight preserving bijection between the set of admissible subsets and the crystal basis $\clB(\lm)$. In fact, it is an isomorphism of crystals.

Now, let us consider the case when $G = \mathrm{SL}_n$. It is well-known that the crystal basis $\clB(\lm)$ is modeled by the set $\SST(\lm)$ of semistandard Young tableaux of shape $\lm$ filled with letters in $\{ 1,\ldots,n \}$. Namely, $\SST(\lm)$ is equipped with a crystal structure in a way such that it is isomorphic to $\clB(\lm)$. Combining the discussion above, we obtain an isomorphism of crystals between the set of admissible subsets and $\SST(\lm)$. Then, it is natural to ask for an explicit description of this isomorphism.

The goal of this paper is to provide a simple and explicit formula describing such an isomorphism. In fact, our formula gives an isomorphism between the set of admissible subsets and the set of Gelfand-Tsetlin patterns of shape $\lm$, the latter of which is in a natural one-to-one correspondence with $\SST(\lm)$.

The crucial point in our strategy is to extend the fixed reduced alcove path. Recall that in the alcove path model, one has to fix a reduced alcove path from $A_\circ$ to $A_\circ - \lm$, and consider its admissible subsets. In this paper, we fix a reduced alcove path $\Pi = (A_\circ=A_0,A_1,\ldots,A_u = w_\circ A_\circ - \lm)$ from $A_\circ$ to $w_\circ A_\circ - \lm$, where $w_\circ$ denotes the longest element of the Weyl group of $G$. After modifying the definition of admissible subsets, we prove that the set of admissible subsets of $\Pi$ is equipped with a crystal structure isomorphic to $\clB(\lm)$.

When $G = \mathrm{SL}_n$, to each admissible subset $J$ of $\Pi$, we can associate a tuple $N(J) = (N_{i,j}(J))_{1 \leq i < j \leq n}$ of nonnegative integers. Here, we omit the precise definition of $N(J)$; see Subsection \ref{Almost decreasing subsets} instead. Then, our main result in this paper is the following:

\begin{thm}
Let $G = \mathrm{SL}_n$, $\lm$ be a dominant integral weight, $\Pi$ a reduced alcove path from $A_\circ$ to $w_\circ A_\circ - \lm$. Let $(\lm_1,\ldots,\lm_n)$ be the partition corresponding to $\lm$. Then, the assignment $J \mapsto (\lm_i-N_{i,j}(J))_{1 \leq i \leq j \leq n}$, where $N_{i,i}(J) := 0$, gives rise to an isomorphism of crystals from the set of admissible subsets and the set of Gelfand-Tsetlin patterns of shape $\lm$.
\end{thm}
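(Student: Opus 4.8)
The strategy is to combine two facts already available: the set $\clA(\Pi)$ of admissible subsets of $\Pi$ carries a crystal structure isomorphic to $\clB(\lm)$, and the set $\GT(\lm)$ of Gelfand--Tsetlin patterns of shape $\lm$ is isomorphic to $\clB(\lm)$ through its standard identification with $\SST(\lm)$. Since $\clB(\lm)$ is connected and generated from its highest weight element by the lowering operators, it is enough to check that the assignment $\Psi\colon J \mapsto (\lm_i - N_{i,j}(J))_{1\le i\le j\le n}$ (i) takes values in $\GT(\lm)$, (ii) sends the highest weight admissible subset $J_\lm$ to the highest weight pattern, (iii) is weight preserving, and (iv) intertwines the Kashiwara operators $\Ftil_i$ (and hence $\Etil_i$), including the matching of their domains of definition. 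Indeed, (ii) and (iv) make $\Psi$ surjective, $|\clA(\Pi)| = |\clB(\lm)| = |\GT(\lm)|$ forces it to be bijective (injectivity is in any case clear, as $J$ is recovered from the tuple $N(J)$), and a weight-preserving bijection intertwining the crystal operators is an isomorphism of crystals.

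For (i), I would first observe that the diagonal positions $(i,i)$ of the triangular array carry the entries $\lm_i - N_{i,i}(J) = \lm_i$, so they form the shape row and a pattern in the image automatically has shape $\lm$; what remains are the interlacing inequalities, which unwind to $0 \le N_{i,j-1}(J) \le N_{i,j}(J)$ and $N_{i,j}(J) \le N_{i+1,j}(J) + \lm_i - \lm_{i+1}$, and these I would read off directly from the description of the $N_{i,j}$ via almost decreasing subsets in Subsection~\ref{Almost decreasing subsets}. Point (ii) amounts to checking that the highest weight admissible subset has all $N_{i,j}$ equal to zero, which again is immediate from that description; and (iii) is the computation that $\wt(J)$, rewritten through $N(J)$, equals $(|\lm^{(1)}|,|\lm^{(2)}|-|\lm^{(1)}|,\dots)$, where $\lm^{(k)}$ denotes the $k$-th row of $\Psi(J)$, starting from $\wt(J) = \lm - \sum(\text{roots crossed by }J)$.

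The real work is (iv). On the alcove side $\Ftil_i$ is the Lenart--Postnikov root operator adapted to $\Pi$: it forms the $\pm$-signature produced by the crossings of $\alpha_i$-hyperplanes along $\Pi$, cancels adjacent $+\,-$ pairs, and alters $J$ at the position of the surviving sign; on $\GT(\lm)$, $\Ftil_i$ is given by an analogous signature rule and changes a single entry in the row with $i$ entries by $1$. I would show that, after passing through $\Psi$, the alcove rule changes exactly one of the $i$ quantities $N_{1,\,n-i+1}(J),\dots,N_{i,\,n}(J)$ --- precisely the $N$'s attached to that row --- by $1$, and that the position selected is governed by exactly the Gelfand--Tsetlin signature. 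Proving this coincidence of the two signature rules is the main obstacle: it forces one to unpack both root operators simultaneously and match their cancellation patterns. I expect this to be done by reducing to a convenient shape of $\Pi$, built by concatenating rank-one segments, together with an induction on $n$ (peeling off the last row), or else by identifying $N(J)$ with the string parametrization of $\clB(\lm)$ relative to a standard reduced word for $w_\circ$, from which the passage to Gelfand--Tsetlin patterns is classical. Granting (i)--(iv), $\Psi$ is the asserted isomorphism of crystals.
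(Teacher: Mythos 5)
Your plan rests on two assumptions that are actually substantial parts of what must be proved. First, the crystal structure on $\clA(\Pi)$ for a reduced alcove path ending at $w_\circ A_\circ-\lm$ is not ``already available'': Lenart--Postnikov treat paths ending at $A_\circ-\lm$, and extending their result to the present setting (via the embedding $\Psi$ into $\clA(\Pi'')$ for the weight $\lm+\rho$ and the Yang--Baxter moves, Corollary \ref{extended alcove path is B(lm)}) is itself a nontrivial step. Second, and more seriously, $N_{i,j}(J)$ is \emph{not} directly readable from a general admissible subset: an element of $\clA(\Pi)$ for $\Pi\in\widetilde{\AP}(\lm)$ need not use each positive root exactly once. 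For instance, in type $A_2$ with $\lm=(2,1,0)$, Proposition \ref{Characterization of extended chain} allows $\Gamma(\Pi)=(\alpha_1,\alpha_1+\alpha_2,\alpha_2,\alpha_1+\alpha_2,\alpha_1,\alpha_1+\alpha_2,\alpha_2)$, and $J=\{1,3,5\}$ is admissible with labels $\alpha_1,\alpha_2,\alpha_1$, so the root $\alpha_1+\alpha_2$ never occurs in $J$ and the count defining $N_{1,3}(J)$ makes no sense. In the paper, $N(J)$ is defined only after transporting $J$ by Yang--Baxter moves to an almost $\bfi_A$-decreasing representative $J'$, and the existence of $J'$ and the independence of $N(J')$ from the choices made are precisely parts \eqref{Main Theorem 1} and \eqref{Main Theorem 2} of Theorem \ref{Main Theorem}, not hypotheses. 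Consequently your claims that injectivity is ``clear, as $J$ is recovered from $N(J)$'', that the interlacing inequalities can be ``read off directly'', and that the highest-weight normalization is ``immediate'' do not get off the ground for a general $\Pi$.

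Beyond this, the step you yourself flag as the main obstacle --- matching the Lenart--Postnikov signature rule with the Gelfand--Tsetlin one in item (iv) --- is exactly where the content of the theorem lies, and your proposal only gestures at two possible strategies without carrying either out. The paper's route is essentially your second alternative made precise, but organized so as to avoid any direct comparison of the root operators: it constructs a specific path $\Pi(\lm)$, shows that every $J\in\clA(\Pi(\lm))$ is almost $\bfi_A$-decreasing and corresponds bijectively to a Gelfand--Tsetlin pattern, computes the $\bfi_A$-string datum of such $J$ purely in terms of $N(J)$ (Proposition \ref{String datum of J}, via the explicit formulas for $\beta^J_{a,b}$, $l^J_{a,b}$ in Proposition \ref{Properties of almost decreasing subset}), compares with the string datum of a pattern (Proposition \ref{String datum of GT}), and concludes by Lemma \ref{sufficient condition on isomorphisms}; the case of general $\Pi$ is then reduced to $\Pi(\lm)$ by Yang--Baxter moves. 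As written, your argument is an outline whose core step is unverified and whose input $N(J)$ is undefined for general $J$, so the proof is essentially incomplete.
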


In fact, the crystal isomorphism between the alcove path model and the Gelfand-Tsetlin pattern model, or the semistandard tableau model was constructed in \cite{LL15} in more generality. However, we do not use any results in \cite{LL15} in this paper. Also, our formula is easier and more explicit; the isomorphism can be calculated just by counting the root multiplicities in a given admissible subset.

This paper is organized as follows. In Section 2, we prepare basic notions concerning crystals. Especially, we recall the string data of Gelfand-Tsetlin patterns. Section 3 is devoted to reviewing Lenart-Postnikov's alcove path model. In Section 4, we introduce the extended alcove path model, and prove the main theorem by comparing the string data of the admissible subsets with those of the Gelfand-Tsetlin patterns.

\subsection*{Acknowledgement}
HW would like to thank Susumu Ariki for bringing his attention to this topic. The authors are grateful to the referees for careful readings and helpful comments.

\section{Crystals}
In this section, we recall basic notions of (abstract) crystals, especially the highest weight crystals, string data, and the Gelfand-Tsetlin patterns. For details, see e.g., \cite{BS17}. We also refer the readers to \cite{BB05} or \cite{H90} for basic knowledge about Coxeter groups.

\subsection{Finite root systems}
Let $\Phi$ be a finite root system in a Euclidean space $(E,(\cdot,\cdot))$, $\Delta = \{ \alpha_i \mid i \in I \}$ a set of simple roots, $\Phi^+$ the corresponding set of positive roots. For $\alpha \in \Phi$, we denote its coroot by $\alpha^\vee := \frac{2}{(\alpha,\alpha)}\alpha$. For $\alpha \in \Phi$, we denote by $s_\alpha$ the reflection with respect to $\alpha$, i.e.,
$$
s_\alpha(v) := v - (v,\alpha^\vee)\alpha \qu \text{ for } v \in E.
$$

Let
$$
\Lm := \{ v \in E \mid (v,\alpha^\vee) \in \Z \Forall \alpha \in \Phi \}
$$
be the weight lattice, and
$$
\Lm^+ := \{ v \in \Lm \mid (v,\alpha^\vee) \geq 0 \Forall \alpha \in \Phi \}
$$
the set of dominant integral weights.

For $\alpha \in \Phi$ and $k \in \Z$, consider the hyperplane
$$
H_{\alpha,k} := \{ v \in E \mid (v,\alpha^\vee) = k \},
$$
and the affine reflection
$$
s_{\alpha,k} : E \rightarrow E;\ v \mapsto v - ((v,\alpha^\vee)-k)\alpha = s_\alpha(v) + k\alpha.
$$
For each $\lm \in \Lm$, let $t_{\lm} : E \rightarrow E;\ v \mapsto v + \lm$ denote the translation by $\lm$. Then, from the definition of $s_{\alpha,k}$, we see that $s_{\alpha,k} = t_{k\alpha} s_\alpha$.

Let $W$ (resp., $W_\aff$) be the subgroup of the affine transformation group of $E$ generated by $\{ s_\alpha \mid \alpha \in \Phi^+ \}$ (resp., $\{ s_{\alpha,k} \mid \alpha \in \Phi^+,\ k \in \Z \}$). It is the Weyl group (resp., affine Weyl group) associated to the coroot system $\Phi^\vee := \{ \alpha^\vee \mid \alpha \in \Phi \}$. It is well-known that $W$ (resp., $W_\aff$) is generated by $S := \{ s_i := s_{\alpha_i} \mid i \in I \}$ (resp., $S_\aff := S \sqcup \{ s_0 := s_{\theta,1}\}$, where $\theta$ is such that $\theta^\vee$ is the highest coroot). Moreover, $(W,S)$ (resp., $(W_\aff,S_\aff)$) forms a Coxeter system.

\begin{ex}\label{Type A root system}\normalfont
Let $E = \{ \sum_{i=1}^n x_i \eps_i \in \R^n \mid \sum_{i=1}^n x_i = 0  \}$, where $\{ \eps_1,\ldots,\eps_n \}$ is the standard basis of $\R^n$. Then, $\Phi := \{ \eps_i-\eps_j \mid 1 \leq i \neq j \leq n \}$ forms the root system of type $A_{n-1}$ with simple roots $\Delta := \{ \alpha_i := \eps_i-\eps_{i+1} \mid 1 \leq i \leq n-1 \}$ and positive roots $\Phi^+ := \{ \eps_i-\eps_j \mid 1 \leq i < j \leq n \}$. The Weyl group $W$ is isomorphic to the symmetric group $\frS_n$; the reflection $s_{\eps_i-\eps_j}$ corresponds to the transposition $(i,j)$. Each $\lm \in \Lm^+$ is identified with a partition $\lm = (\lm_1,\ldots,\lm_n)$ by
$$
\lm_i-\lm_{i+1} = (\lm,\alpha_i), \qu \lm_n = 0.
$$
\end{ex}

\begin{lem}\label{reflection of hyperplanes}
Let $\alpha,\beta \in \Phi$, $k,l \in \Z$. Then,
$$
s_{\beta,l}(H_{\alpha,k}) = H_{s_\beta(\alpha), k - l (\beta,\alpha^\vee)}.
$$
\end{lem}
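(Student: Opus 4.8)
The plan is a direct pointwise computation, exploiting that $s_{\beta,l}$ is an affine involution. Since $s_{\beta,l}\inv = s_{\beta,l}$, a point $w \in E$ lies in $s_{\beta,l}(H_{\alpha,k})$ if and only if $s_{\beta,l}(w) \in H_{\alpha,k}$, i.e. $(s_{\beta,l}(w),\alpha^\vee) = k$. Using the formula $s_{\beta,l}(w) = s_\beta(w) + l\beta$ recorded just before the lemma, I would expand
$$
(s_{\beta,l}(w),\alpha^\vee) = (s_\beta(w),\alpha^\vee) + l(\beta,\alpha^\vee).
$$

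Next I would rewrite the term $(s_\beta(w),\alpha^\vee)$. Because $s_\beta$ is an orthogonal involution, $(s_\beta(w),\alpha^\vee) = (w,s_\beta(\alpha^\vee))$. The one step that is not purely formal is the coroot identity $s_\beta(\alpha^\vee) = s_\beta(\alpha)^\vee$: since $s_\beta$ preserves the inner product we have $(s_\beta(\alpha),s_\beta(\alpha)) = (\alpha,\alpha)$, hence $s_\beta(\alpha)^\vee = \frac{2}{(\alpha,\alpha)}s_\beta(\alpha) = s_\beta\!\left(\frac{2}{(\alpha,\alpha)}\alpha\right) = s_\beta(\alpha^\vee)$. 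Combining, $(s_{\beta,l}(w),\alpha^\vee) = (w,s_\beta(\alpha)^\vee) + l(\beta,\alpha^\vee)$, and this equals $k$ precisely when $(w,s_\beta(\alpha)^\vee) = k - l(\beta,\alpha^\vee)$, i.e. when $w \in H_{s_\beta(\alpha),\,k - l(\beta,\alpha^\vee)}$. This proves the asserted equality of hyperplanes.

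I do not expect any genuine obstacle here; the statement is bookkeeping about how affine reflections move reflection hyperplanes. The only point requiring care is not to conflate $s_\beta(\alpha)^\vee$ with a naively ``dualized'' expression — it is exactly the identity $s_\beta(\alpha^\vee) = s_\beta(\alpha)^\vee$ that makes the new index $k - l(\beta,\alpha^\vee)$ (rather than, say, something involving $(\beta^\vee,\alpha)$) come out. As an alternative one could instead conjugate: $s_{\beta,l}$ carries $H_{\alpha,k} = \mathrm{Fix}(s_{\alpha,k})$ to $\mathrm{Fix}(s_{\beta,l}\,s_{\alpha,k}\,s_{\beta,l}\inv)$, and identify the conjugate affine reflection on the right; but the direct computation above is shorter and self-contained.
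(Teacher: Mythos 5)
Your proof is correct and rests on the same inner-product computation as the paper's: the identity $(s_{\beta,l}(w),\alpha^\vee) = (w,s_\beta(\alpha)^\vee) + l(\beta,\alpha^\vee)$, together with $s_\beta(\alpha^\vee) = s_\beta(\alpha)^\vee$. The only (harmless) structural difference is that you invoke the involutivity of $s_{\beta,l}$ to get a single biconditional, whereas the paper proves one inclusion directly and obtains the reverse by reapplying the computation with $(\alpha,k)$ replaced by $(s_\beta(\alpha),k-l(\beta,\alpha^\vee))$.
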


\begin{proof}
Let $v \in H_{\alpha,k}$. Then, we have
$$
(s_{\beta,l}(v),s_\beta(\alpha)^\vee) = (s_\beta(v)+l\beta,s_\beta(\alpha^\vee)) = (v-l\beta,\alpha^\vee) = k-l(\beta,\alpha^\vee).
$$
This implies that $s_{\beta,l}(H_{\alpha,k}) \subset H_{s_\beta(\alpha),k-l(\beta,\alpha^\vee)}$. Replacing $(\alpha,k)$ by $(s_\beta(\alpha),k-l(\beta,\alpha^\vee))$, we obtain
$$
s_{\beta,l}(H_{s_\beta(\alpha),k-l(\beta,\alpha^\vee)}) \subset H_{\alpha,k-l(\beta,\alpha^\vee)-l(\beta,s_\beta(\alpha^\vee))} = H_{\alpha,k}.
$$
Hence, we conclude that $s_{\beta,l}(H_{\alpha,k}) = H_{s_\beta(\alpha),k-l(\beta,\alpha^\vee)}$.
\end{proof}

Let $w_\circ \in W$ denote the longest element. Set $N := \ell(w_\circ) = |\Phi^+|$.

\begin{defi}\normalfont
A reduced word (for $w_\circ$) is an $N$-tuple $\bfi = (i_1,\ldots,i_N) \in I^N$ such that $w_\circ = s_{i_1} \cdots s_{i_N}$.
\end{defi}

\begin{defi}\normalfont
A reflection order (also known as a convex order) is a total order $\leq$ on $\Phi^+$ satisfying the following: If $\alpha,\beta,\gamma \in \Phi^+$ is such that $\alpha < \beta$ and $\alpha + \beta = \gamma$, then we have $\alpha < \gamma < \beta$.
\end{defi}

To a reduced word $\bfi = (i_1,\ldots,i_N)$, we associate a sequence $(\beta_1,\ldots,\beta_N)$ of positive roots by
$$
\beta_1 := \alpha_{i_1}, \qu \beta_j := s_{i_1} \cdots s_{i_{j-1}}(\alpha_{i_j}) \For 2 \leq j \leq N.
$$
It is well-known that $\{ \beta_1,\ldots,\beta_N \} = \Phi^+$, and the total order $<_{\bfi}$ on $\Phi^+$ given by $\beta_1 <_{\bfi} \cdots <_{\bfi} \beta_N$ is a reflection order. Moreover, this assignment gives a bijection between the set of reduced words and the set of reflection orders.

\begin{ex}\normalfont
Consider the case when our root system is of type $A_2$. There are only two reduced words
$$
\bfi_1 := (1,2,1), \AND \bfi_2 := (2,1,2),
$$
and only two reflection orders
$$
\eps_1-\eps_2 <_{\bfi_1} \eps_1-\eps_3 <_{\bfi_1} \eps_2-\eps_3, \AND \eps_2-\eps_3 <_{\bfi_2} \eps_1-\eps_3 <_{\bfi_2} \eps_1-\eps_2.
$$
\end{ex}

\begin{rem}\normalfont\label{Remark on reflection order on coroots}
Let $\bfi = (i_1,\ldots,i_N)$ be a reduced word. Since the Weyl group for the root system $\Phi$ is the same as that for $\Phi^\vee$, the word $\bfi$ is also a reduced word for $\Phi^\vee$. Hence, the total order $\leq_{\bfi}^\vee$ on $(\Phi^\vee)^+ := \{ \alpha^\vee \mid \alpha \in \Phi^+ \}$, defined by the same way as the reflection order $\leq_\bfi$ on $\Phi^+$, is a reflection order. Note that we have $\alpha^\vee \leq_\bfi^\vee \beta^\vee$ if and only if $\alpha \leq_\bfi \beta$. Then, for each $\alpha,\beta,\gamma \in \Phi^+$ such that $\alpha <_\bfi \beta$ and $\gamma^\vee = \alpha^\vee + \beta^\vee$, we have $\alpha <_\bfi \gamma <_\bfi \beta$.
\end{rem}

\subsection{Crystals}

\begin{defi}\normalfont
A crystal is a set $B$ equipped with maps $\wt : B \rightarrow \Lm$, $\Etil_i,\Ftil_i : B \rightarrow B \sqcup \{0\}$ ($0$ is a formal symbol), $i \in I$ satisfying the following:
\begin{enumerate}
\item For each $b,b' \in B$ and $i \in I$, we have $\Ftil_i(b) = b'$ if and only if $b = \Etil_i(b')$.
\item For each $b,b' \in B$ and $i \in I$, if $\Ftil_i(b) = b'$, then $\wt(b') = \wt(b)-\alpha_i$.
\item For each $b \in B$ and $i \in I$, we have $\vphi_i(b) = \vep_i(b) + (\wt(b),\alpha_i^\vee)$, where
$$
\vphi_i(b) := \max\{ k \geq 0 \mid \Ftil_i^k(b) \neq 0 \}, \qu \vep_i(b) := \max\{ k \geq 0 \mid \Etil_i^k(b) \neq 0 \}.
$$
\end{enumerate}
\end{defi}

\begin{defi}\normalfont
Let $B_1,B_2$ be crystals. A morphism $\psi : B_1 \rightarrow B_2$ of crystals is a map $\psi : B_1 \sqcup \{0\} \rightarrow B_2 \sqcup \{0\}$ satisfying the following:
\begin{enumerate}
\item For each $b \in B_1$ and $i \in I$, if $\psi(b) \in B_2$, then we have $\wt(\psi(b)) = \wt(b)$, $\vphi_i(\psi(b)) = \vphi_i(b)$, and $\vep_i(\psi(b)) = \vep_i(b)$.
\item $\psi(0) = 0$.
\item For each $b \in B_1$ and $i \in I$, we have $\Ftil_i(\psi(b)) = \psi(\Ftil_i(b))$ and $\Etil_i(\psi(b)) = \psi(\Etil_i(b))$.
\end{enumerate}
A morphism $\psi$ is said to be an isomorphism if it is a bijection and if $\psi\inv$ is a morphism of crystals.
\end{defi}

In some literature, what we just defined are called seminormal crystals or semiregular crystals, and strict morphisms of crystals.

To a crystal $B$, we associate a colored directed graph as follows. The vertex set is $B$. For $b,b' \in B$, we put an arrow colored by $i \in I$ from $b$ to $b'$ if $b' = \Ftil_i(b)$. This graph is called the crystal graph of $B$.

The notion of crystals originates in the representation theory of complex semisimple Lie algebras (or, associated quantum groups). Given a finite-dimensional representation of the complex semisimple Lie algebra whose root system is isomorphic to our root system $\Phi$, one can obtain a crystal by extracting some information about its module structure. In particular, for each $\lm \in \Lm^+$, there exists a unique crystal $\clB(\lm)$ coming from the irreducible highest weight module $V(\lm)$ of highest weight $\lm$. One feature of $\clB(\lm)$ is the existence of the highest weight vector $b_\lm \in \clB(\lm)$; it satisfies $\wt(b_\lm) = \lm$, and
$$
\clB(\lm) = \{ \Ftil_{i_1} \cdots \Ftil_{i_l}(b_\lm) \mid l \geq 0,\ i_1,\ldots,i_l \in I \} \setminus \{0\}.
$$

\begin{ex}\normalfont
Suppose that our root system is of type $A_2$, and $\lm = (2,1,0)$. Then, the crystal graph of $\clB(\lm)$ is as follows:
$$
\xymatrix@R=10pt{
                      & \bullet \ar[dl]_1 \ar[dr]^2 & \\
\bullet \ar[d]_2 &                                     & \bullet \ar[d]^1 \\
\bullet \ar[d]_2 &                                     & \bullet \ar[d]^1 \\
\bullet \ar[dr]_1 &                                    & \bullet \ar[dl]^2 \\
                       &  \bullet                         &
}
$$
\end{ex}

Let $B$ be a crystal. For each $b \in B$ and $i \in I$, set
$$
\Etil_i^{\max}(b) := \Etil_i^{\vep_i(b)}(b).
$$

\begin{defi}\normalfont
Let $B$ be a crystal, $\bfi = (i_1,\ldots,i_N) \in I^N$ a reduced word. The $\bfi$-string datum $\str_{\bfi}(b)$ of $b \in B$ is an $N$-tuple of nonnegative integers given by
$$
\str_{\bfi}(b) := (\vep_{i_1}(b),\vep_{i_2}(\Etil_{i_1}^{\max}(b)),\ldots,\vep_{i_N}(\Etil_{i_{N-1}}^{\max} \cdots \Etil_{i_1}^{\max}(b))).
$$
\end{defi}

When $B \simeq \clB(\lm)$, it is known that the map $B \rightarrow \Z_{\geq 0}^N;\ b \mapsto \str_{\bfi}(b)$ is injective since $\Etil_{i_{N}}^{\max} \cdots \Etil_{i_1}^{\max}(b) = b_\lm$ for all $b \in \clB(\lm)$.

\begin{lem}\label{sufficient condition on isomorphisms}
Let $B_1,B_2$ be crystals isomorphic to $\clB(\lm)$, $\bfi$ a reduced word. Suppose that there exists a bijection $\psi : B_1 \rightarrow B_2$ such that $\str_\bfi(\psi(b)) = \str_\bfi(b)$ for all $b \in B_1$. Then, $\psi$ is an isomorphism of crystals.
\end{lem}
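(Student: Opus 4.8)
The plan is to reduce everything to the model crystal $\clB(\lm)$ and then invoke the injectivity of the string datum map recalled just above the statement. First I would fix isomorphisms of crystals $\phi_1 : B_1 \to \clB(\lm)$ and $\phi_2 : B_2 \to \clB(\lm)$, which exist by hypothesis. The key preliminary observation is that \emph{any} isomorphism of crystals preserves string data: if $\phi$ is a crystal isomorphism, then $\vep_i(\phi(b)) = \vep_i(b)$ and $\phi \circ \Etil_i = \Etil_i \circ \phi$, whence $\phi(\Etil_i^{\max}(b)) = \Etil_i^{\vep_i(b)}(\phi(b)) = \Etil_i^{\max}(\phi(b))$; iterating this identity along the entries of $\bfi = (i_1,\ldots,i_N)$ yields $\str_\bfi(\phi(b)) = \str_\bfi(b)$ for all $b$.

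Next I would form the composite bijection $\chi := \phi_2 \circ \psi \circ \phi_1\inv : \clB(\lm) \to \clB(\lm)$. Applying the preliminary observation to $\phi_1$ and to $\phi_2$, and combining it with the hypothesis $\str_\bfi(\psi(b)) = \str_\bfi(b)$, one gets $\str_\bfi(\chi(b)) = \str_\bfi(b)$ for every $b \in \clB(\lm)$. Since the map $\clB(\lm) \to \Z_{\geq 0}^N$, $b \mapsto \str_\bfi(b)$ is injective (as recalled in the excerpt, because $\Etil_{i_N}^{\max} \cdots \Etil_{i_1}^{\max}(b) = b_\lm$ for all $b$), it follows that $\chi(b) = b$ for all $b$, i.e. $\chi = \id_{\clB(\lm)}$.

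Finally, $\chi = \id$ means $\psi = \phi_2\inv \circ \phi_1$, a composition of isomorphisms of crystals (recall that $\phi_2\inv$ is again a crystal isomorphism by the definition of an isomorphism), and hence $\psi$ is itself an isomorphism of crystals, as desired.

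As for the difficulty: there is essentially no deep obstacle here. The only point demanding any care is the preliminary observation that crystal isomorphisms preserve the string datum, which is a short but genuinely necessary unwinding of the definition of a crystal morphism — in particular the compatibility of $\vep_i$ with $\Etil_i^{\max}$. Everything else is a formal manipulation of bijections together with an appeal to the already-stated injectivity of the string datum map on $\clB(\lm)$.
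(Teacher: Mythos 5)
Your proof is correct and follows essentially the same route as the paper: the paper simply says "without loss of generality we may assume $B_1 = B_2 = \clB(\lm)$" and then invokes the injectivity of $\str_\bfi$ to conclude $\psi = \id$, while you spell out the content of that reduction by checking that crystal isomorphisms preserve string data and transporting $\psi$ to $\chi = \phi_2 \circ \psi \circ \phi_1\inv$ on $\clB(\lm)$. Same argument, with the implicit step made explicit.
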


\begin{proof}
Without loss of generality, we may assume that $B_1 = B_2 = \clB(\lm)$. Then, the injectivity of $\str_\bfi : \clB(\lm) \rightarrow \Z_{\geq 0}^N$ implies that $\psi(b) = b$ for all $b \in \clB(\lm)$. Hence, $\psi$ is the identity map on $\clB(\lm)$, which is an isomorphism of crystals. This completes the proof.
\end{proof}

\subsection{Gelfand-Tsetlin patterns}\label{GT patterns}
In this subsection, assume that our root system is of type $A_{n-1}$. In particular, we identify the dominant integral weight $\lm$ with the partition $(\lm_1,\ldots,\lm_n)$ as in Example \ref{Type A root system}.

\begin{defi}\normalfont
A Gelfand-Tsetlin pattern of shape $\lm$ is a tuple $\bfa = (a_{i,j})_{1 \leq i \leq j \leq n}$ of nonnegative integers satisfying the following:
\begin{itemize}
\item $a_{i,i} = \lm_i$ for all $i = 1,\ldots,n$.
\item $a_{i+1,j} \leq a_{i,j} \leq a_{i,j-1}$ for all $1 \leq i < j \leq n$.
\end{itemize}
Let $\GT(\lm)$ denote the set of Gelfand-Tsetlin patterns of shape $\lm$.
\end{defi}

$\GT(\lm)$ is in a natural bijection with the set $\SST(\lm)$ of semistandard Young tableaux of shape $\lm$ filled with letters in $\{ 1,\ldots,n \}$. The bijection is given as follows. Let $T \in \SST(\lm)$. We denote by $T(i,j)$ the entry of the box in the $i$-th row and the $j$-th column. Then, the corresponding Gelfand-Tsetlin pattern $\bfa = (a_{i,j})_{1 \leq i \leq j \leq n}$ is given as follows; for each $j = 1,\ldots,n$, the tuple $(a_{1,j},a_{2,j+1},\ldots,a_{n-j+1,n})$ is the partition representing the shape of the tableau obtained from $T$ by deleting the boxes whose entries are greater than $n-j+1$. In other words,
$$
a_{i,j} = \sharp \{ (k,l) \mid k = i \AND T(k,l) \leq n-j+i \}.
$$
Via this bijection, $\GT(\lm)$ is equipped with a crystal structure isomorphic to $\clB(\lm)$.

\begin{ex}\label{Example 232}\normalfont
Suppose that $n = 3$, and $\lm = (2,1,0)$. The following are the crystal graphs of $\SST(\lm)$ and $\GT(\lm)$:
$$
\xymatrix@R=10pt{
                      & \text{\tiny{$\ytableausetup{centertableaux} \begin{ytableau} 1 & 1 \\ 2 \end{ytableau}$}} \ar[dl]_1 \ar[dr]^2 & \\
\text{\tiny{$\ytableausetup{centertableaux} \begin{ytableau} 1 & 2 \\ 2 \end{ytableau}$}} \ar[d]_2 &                                     & \text{\tiny{$\ytableausetup{centertableaux} \begin{ytableau} 1 & 1 \\ 3 \end{ytableau}$}} \ar[d]^1 \\
\text{\tiny{$\ytableausetup{centertableaux} \begin{ytableau} 1 & 3 \\ 2 \end{ytableau}$}} \ar[d]_2 &                                     & \text{\tiny{$\ytableausetup{centertableaux} \begin{ytableau} 1 & 2 \\ 3 \end{ytableau}$}} \ar[d]^1 \\
\text{\tiny{$\ytableausetup{centertableaux} \begin{ytableau} 1 & 3 \\ 3 \end{ytableau}$}} \ar[dr]_1 &                                    & \text{\tiny{$\ytableausetup{centertableaux} \begin{ytableau} 2 & 2 \\ 3 \end{ytableau}$}} \ar[dl]^2 \\
                       &  \text{\tiny{$\ytableausetup{centertableaux} \begin{ytableau} 2 & 3 \\ 3 \end{ytableau}$}}                       &
} \qu 
\xymatrix@R=10pt{
                      & \text{{\tiny $\begin{pmatrix} 2 \qu 1 \qu 0 \\ 2 \qu 1 \\ 2  \end{pmatrix}$}} \ar[dl]_1 \ar[dr]^2 & \\
\text{{\tiny $\begin{pmatrix} 2 \qu 1 \qu 0 \\ 2 \qu 1 \\ 1 \end{pmatrix}$}} \ar[d]_2 &                                     & \text{{\tiny $\begin{pmatrix} 2 \qu 1 \qu 0 \\ 2 \qu 0 \\ 2  \end{pmatrix}$}} \ar[d]^1 \\
\text{{\tiny $\begin{pmatrix} 2 \qu 1 \qu 0 \\ 1 \qu 1 \\ 1  \end{pmatrix}$}} \ar[d]_2 &                                     & \text{{\tiny $\begin{pmatrix} 2 \qu 1 \qu 0 \\ 2 \qu 0 \\ 1  \end{pmatrix}$}} \ar[d]^1 \\
\text{{\tiny $\begin{pmatrix} 2 \qu 1 \qu 0 \\ 1 \qu 0 \\ 1  \end{pmatrix}$}} \ar[dr]_1 &                                    & \text{{\tiny $\begin{pmatrix} 2 \qu 1 \qu 0 \\ 2 \qu 0 \\ 0  \end{pmatrix}$}} \ar[dl]^2 \\
                       &  \text{{\tiny $\begin{pmatrix} 2 \qu 1 \qu 0 \\ 1 \qu 0 \\ 0  \end{pmatrix}$}}                        &
}
$$
here, we display $(a_{i,j})_{1 \leq i \leq j \leq 3} \in \GT(\lm)$ as $\begin{pmatrix} a_{11} \qu a_{22} \qu a_{33} \\ a_{12} \qu a_{23} \\ a_{13} \end{pmatrix}$.
\end{ex}

Let us consider the following $N$-tuple:
$$
\bfi_A := (1,2,1,3,2,1,\ldots,n-1,n-2,\ldots,1) \in I^N.
$$
As is well-known, this is a reduced word. Let us write $\Phi^+ = \{ \gamma_1, \ldots, \gamma_N \}$ in a way such that $\gamma_1 <_{\bfi_A} \cdots <_{\bfi_A} \gamma_N$. Also, for each $1 \leq i < j \leq n$ and $1 \leq k < l \leq n$, we write $(i,j) <_{\bfi_A} (k,l)$ if $\eps_i-\eps_j <_{\bfi_A} \eps_k-\eps_l$. Explicitly, we have $(i,j) <_{\bfi_A} (k,l)$ if and only if either $(1)$ $j < l$ or $(2)$ $j = l$ and $i < k$. For example, we have
$$
(1,2) <_{\bfi_A} (1,3) <_{\bfi_A} (2,3) <_{\bfi_A} (1,4) <_{\bfi_A} (2,4) <_{\bfi_A}  (3,4) <_{\bfi_A} (1,5) <_{\bfi_A} \cdots.
$$

In the sequel, we consider the $\bfi_A$-string datum of various crystals. Let $B$ be a crystal, $b \in B$. Let us write $\str_{\bfi_A}(b) = (d_1,\ldots,d_N)$. It is convenient to write $\str_{\bfi_A}(b) = (d_{i,j})_{1 \leq i < j \leq n}$, where $d_{i,j} = d_k$ if $\gamma_k = \eps_i-\eps_j$.

\begin{prop}\label{String datum of GT}
Let $\bfa = (a_{i,j})_{1 \leq i \leq j \leq n} \in \GT(\lm)$. Then, the $\bfi_A$-string datum of $\bfa$ is given by $\str_{\bfi_A}(\bfa) = (d_{i,j}(\bfa))_{1 \leq i < j \leq n}$, where
$$
d_{i,j}(\bfa) = \sum_{m=1}^{j-i} (a_{m,m+n-j} - a_{m,m+n-j+1}).
$$
\end{prop}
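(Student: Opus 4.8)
The plan is to compute $\str_{\bfi_A}(\bfa)$ directly, running through the word $\bfi_A$ one letter at a time. The first step is the standard reformulation of a Gelfand--Tsetlin pattern $\bfa = (a_{i,j})_{1\le i\le j\le n}$ as a chain of interlacing partitions $\mu^{(1)} \subseteq \mu^{(2)} \subseteq \cdots \subseteq \mu^{(n)} = \lm$, where $\mu^{(k)} = \mu^{(k)}(\bfa)$ is the length-$k$ partition with $\mu^{(k)}_m := a_{m,m+n-k}$; under this dictionary the defining inequalities of $\GT(\lm)$ become the interlacing relations $\mu^{(k)}_m \ge \mu^{(k-1)}_m \ge \mu^{(k)}_{m+1}$, whence in particular $\mu^{(k)}_m \le \mu^{(k-1)}_{m-1}$ for $m \ge 2$. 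I will then invoke the explicit crystal structure on $\GT(\lm)$ inherited from the tableau model (cf.\ \cite{BS17}): for $k \in I$ the operator $\Etil_k$ alters only the $k$-th row of the chain, $\Etil_k^{\max}$ replaces the $k$-th row by the coordinatewise-largest length-$k$ partition that interlaces both the $(k-1)$-th and $(k+1)$-th rows --- its $m$-th entry is the minimum of the $m$-th entry of the $(k+1)$-th row and the $(m-1)$-th entry of the $(k-1)$-th row, a missing entry being read as $+\infty$ --- and $\vep_k$ of a pattern is the size of that maximal row minus the size of its actual $k$-th row. Finally, $\bfi_A$ is the concatenation of the blocks $B_t := (t, t-1, \ldots, 2, 1)$ for $t = 1, \ldots, n-1$; the $s$-th letter of $B_t$ equals $t+1-s$ and carries the positive root $\eps_s - \eps_{t+1}$, so the coordinate $d_{i,j}(\bfa)$ of $\str_{\bfi_A}(\bfa)$ is the value $\vep_{j-i}$ read off at the $i$-th step within the block $B_{j-1}$.

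The core of the argument is an induction on $t$ establishing the invariant: \emph{after the blocks $B_1, \ldots, B_{t-1}$ have been processed, the current pattern has $k$-th row equal to $(\mu^{(t)}_1, \ldots, \mu^{(t)}_k)$ for $1 \le k \le t$ and equal to $\mu^{(k)}$ for $t \le k \le n$} (these agree at $k = t$; for $t = 1$ the statement is just that the pattern is $\bfa$). For the inductive step I traverse $B_t$; at its $s$-th step the active index is $k := t+1-s$, so $\Etil_k^{\max}$ acts on the $k$-th row. Because the rows are visited in strictly decreasing order, at this moment the $(k+1)$-th row (the upper neighbour) has already been reset to $(\mu^{(t+1)}_1, \ldots, \mu^{(t+1)}_{k+1})$ --- it is the original $\mu^{(t+1)}$ when $s = 1$, and the output of the preceding step when $s \ge 2$ --- whereas the $(k-1)$-th row (the lower neighbour, absent when $k = 1$) is still untouched, equal to $(\mu^{(t)}_1, \ldots, \mu^{(t)}_{k-1})$, as is the $k$-th row itself, $(\mu^{(t)}_1, \ldots, \mu^{(t)}_k)$. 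Hence the maximal compatible $k$-th row has $m$-th entry $\min(\mu^{(t+1)}_m, \mu^{(t)}_{m-1})$, which equals $\mu^{(t+1)}_m$ by the interlacing inequality $\mu^{(t+1)}_m \le \mu^{(t)}_{m-1}$ of the \emph{original} pattern $\bfa$; thus that maximal row is $(\mu^{(t+1)}_1, \ldots, \mu^{(t+1)}_k)$, and since the $k$-th row currently equals $(\mu^{(t)}_1, \ldots, \mu^{(t)}_k)$ the value recorded at this step is
\[
\vep_{t+1-s} \;=\; \sum_{m=1}^{t+1-s}\bigl(\mu^{(t+1)}_m - \mu^{(t)}_m\bigr).
\]
After this step the $k$-th row becomes $(\mu^{(t+1)}_1, \ldots, \mu^{(t+1)}_k)$, and running over all $t$ steps of $B_t$ yields the invariant with $t$ replaced by $t+1$.

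Specializing $t = j-1$ and $s = i$ in the displayed identity gives $d_{i,j}(\bfa) = \sum_{m=1}^{j-i}\bigl(\mu^{(j)}_m - \mu^{(j-1)}_m\bigr)$, and since $\mu^{(j)}_m = a_{m,m+n-j}$ and $\mu^{(j-1)}_m = a_{m,m+n-j+1}$, this is exactly the asserted formula. I expect the difficulties to be organizational rather than conceptual: pinning down the index dictionary $a_{i,j} \leftrightarrow \mu^{(k)}_m$, correctly invoking the descriptions of $\Etil_k$, $\Etil_k^{\max}$ and $\vep_k$ on $\GT(\lm)$, and checking the invariant carefully through the inductive step. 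The single load-bearing point is that traversing each block $B_t$ in \emph{decreasing} order of its index leaves the lower neighbour of the row being modified untouched exactly when it is needed, which is what makes the $\min$ in the formula for the maximal row collapse --- via an interlacing inequality already satisfied by $\bfa$ itself.
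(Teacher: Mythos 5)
Your reduction of the proposition to a sweep through the word $\bfi_A$, and the surrounding bookkeeping (the dictionary $\mu^{(k)}_m = a_{m,m+n-k}$, the block structure of $\bfi_A$, the final telescoping), are fine; the problem is the crystal-theoretic input you invoke, which is false. It is true that $\Etil_k$ changes only the $k$-th row of the chain, but $\Etil_k^{\max}$ does not in general replace that row by the coordinatewise-largest partition interlacing the two neighbouring rows, and $\vep_k$ is not the size of that maximal row minus the size of the actual row: fixing all rows other than the $k$-th, the resulting set of patterns is not a single $k$-string but splits into several $k$-strings, and $\Etil_k^{\max}$ only climbs to the top of the string containing the given pattern, which can have strictly smaller row sum than the coordinatewise maximum. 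A counterexample sits in Example \ref{Example 232} of the paper itself: for $n=3$, $\lm=(2,1,0)$, take the pattern with $a_{12}=2$, $a_{23}=0$, $a_{13}=1$, i.e.\ $\mu^{(1)}=(1)$, $\mu^{(2)}=(2,0)$, $\mu^{(3)}=(2,1,0)$ (the tableau with rows $1\,2$ and $3$). For $k=2$ the largest admissible middle row is $(2,1)$, of size $3$, so your rule predicts $\vep_2=1$, whereas this vertex carries no $2$-arrows in the displayed crystal graph, so $\vep_2=0$. (Your rule gives $\vep_2=1$ at both weight-zero vertices, which is impossible under any reading convention, so this is not a matter of conventions.)

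Consequently the inductive step is unsupported: what you actually need is that the size-difference rule holds at the particular hybrid patterns your sweep produces (row $k+1$ already updated to an initial segment of $\mu^{(t+1)}$, rows $\leq k$ still initial segments of $\mu^{(t)}$), i.e.\ that at those special patterns the vertex does lie on the $k$-string through the size-maximal row. That may well be true --- your final formula is the correct one --- but it is exactly the point that requires proof (the genuine expression for $\vep_k$ on $\GT(\lm)$ is a maximum of partial sums coming from the bracketing rule, and you would have to show this maximum is attained by the full sum in your configurations), and nothing in your argument addresses it. The paper sidesteps all of this: it passes to the tableau $T$ corresponding to $\bfa$, observes that the telescoping sum $\sum_{m=1}^{j-i}(a_{m,m+n-j}-a_{m,m+n-j+1})$ counts the entries equal to $j$ in the first $j-i$ rows of $T$, and quotes \cite[Proposition 11.2 (1)]{BS17}, which states that this count is precisely the $(i,j)$-entry of $\str_{\bfi_A}(T)$.
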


\begin{proof}
Let $T$ denote the semistandard tableau of shape $\lm$ corresponding to $\bfa$. Recall that we have
$$
a_{i,j} = \sharp \{ (k,l) \mid k = i \AND T(k,l) \leq n-j+i \}.
$$
Then, we see that
$$
a_{m,m+n-j} - a_{m,m+n-j+1} = \sharp \{ (k,l) \mid k = m \AND T(k,l) = j \},
$$
and hence,
$$
\sum_{m=1}^{j-i} (a_{m,m+n-j} - a_{m,m+n-j+1}) = \sharp \{ (k,l) \mid 1 \leq k \leq j-i \AND T(k,l) = j \}.
$$
By \cite[Proposition 11.2 (1)]{BS17}, this is nothing but the $(i,j)$-th entry of $\str_{\bfi_A}(T)$. Thus, the assertion follows.
\end{proof}

\section{Alcove paths model}
In this section, we review basic results from \cite{LP07}, \cite{LP08}, \cite{L07} concerning the alcove path model.

\subsection{Alcove paths and admissible subsets}
\begin{defi}\normalfont
An alcove is a connected component of $E \setminus \bigcup_{\substack{\alpha \in \Phi^+ \\ k \in \Z}} H_{\alpha,k}$. The fundamental alcove $A_\circ$ is the alcove defined by
$$
A_\circ = \{ v \in E \mid 0 < (v,\alpha^\vee) < 1 \Forall \alpha \in \Phi^+ \}.
$$
\end{defi}

Two alcoves $A,B$ are said to be adjacent if $A \neq B$ and if their closures have a common facet (face of codimension $1$). Such a common facet $F$ is unique. In general, if $F$ is a facet of an alcove, then there exist unique $\beta \in \Phi^+$ and $l \in \Z$ such that $F \subset H_{\beta,l}$. In this case, we set $s_F := s_{\beta,l}$.

\begin{defi}\normalfont
An alcove path is a sequence $\Pi := (A_0,A_1,\ldots,A_s)$ of alcoves such that $A_{i-1}$ and $A_i$ are adjacent for all $i = 1,\ldots,s$. An alcove path is said to be reduced if it has minimal length among all alcove paths from $A_0$ to $A_s$. The sequence of positive roots associated to $\Pi$ is $\Gamma(\Pi) = (\beta_1,\ldots,\beta_s)$, where $\beta_i$ is the positive root such that the common facet $F_i$ of $A_{i-1}$ and $A_i$ lies in $H_{\beta_i,l_i}$ for some $l_i \in \Z$.
\end{defi}

\begin{lem}[{\cite[Lemma 5.3]{LP07}}]\label{reduced alcove path and reduced expression}
Let $v \in W_\aff$. Then, there exists a one-to-one correspondence between the set of reduced expressions of $v$ and the set of reduced alcove paths from $A_\circ$ to $vA_\circ$.
\end{lem}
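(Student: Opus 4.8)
The plan is to translate the statement completely into the combinatorics of the Coxeter system $(W_\aff,S_\aff)$ via the simply transitive action of $W_\aff$ on alcoves, at which point the assertion reduces to the very definition of the length function $\ell$ on $W_\aff$. So the whole argument will be a dictionary between the geometric side (alcoves, adjacency, walls) and the group-theoretic side (the action of $W_\aff$, its generators $S_\aff$, words in $S_\aff$).

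First I would recall the two standard geometric inputs (see, e.g., \cite{H90} or \cite{BB05}): (i) $W_\aff$ acts simply transitively on the set of alcoves, so $w \mapsto wA_\circ$ is a bijection from $W_\aff$ onto the set of alcoves; and (ii) the bounding hyperplanes of $A_\circ$ are exactly $H_{\alpha_i,0}$ ($i \in I$) and $H_{\theta,1}$, whose reflections are precisely the elements of $S_\aff$, and the alcove adjacent to $A_\circ$ across such a wall $H$ is $s_H A_\circ$. Applying $w$ to (ii) and using $s_{wH} = w s_H w\inv$, I get the globalized adjacency criterion: for $w,w' \in W_\aff$, the alcoves $wA_\circ$ and $w'A_\circ$ are adjacent if and only if $w\inv w' \in S_\aff$.

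Next I would use this to build the key bijection. Given an alcove path $\Pi = (A_\circ = A_0,A_1,\ldots,A_s)$, write $A_i = w_i A_\circ$; the globalized criterion forces $w_i = w_{i-1} t_i$ with $t_i := w_{i-1}\inv w_i \in S_\aff$ uniquely determined by $\Pi$, so $\Pi$ determines a word $(t_1,\ldots,t_s) \in S_\aff^s$, and conversely any such word yields an alcove path starting at $A_\circ$ by setting $A_i := (t_1\cdots t_i)A_\circ$ (consecutive alcoves are adjacent by (ii)). This is a genuine bijection between alcove paths starting at $A_\circ$ and words in $S_\aff$, and such a path ends at $vA_\circ$ exactly when $t_1\cdots t_s = v$, i.e., when the word is an expression of $v$. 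Finally, since every expression of $v$ has length $\geq \ell(v)$ with equality precisely for the reduced ones, the minimal length of an alcove path from $A_\circ$ to $vA_\circ$ equals $\ell(v)$, and the alcove paths of minimal length — the reduced ones — correspond under the above bijection exactly to the reduced expressions of $v$. I expect the only real content to be the two geometric facts (i) and (ii) about the simply transitive action and the walls of $A_\circ$; once those are granted, everything else is bookkeeping and the lemma becomes a restatement of the definition of $\ell$.
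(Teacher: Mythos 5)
Your argument is correct. The paper does not prove this lemma itself---it simply cites \cite[Lemma 5.3]{LP07}---and your proof is exactly the standard dictionary underlying that reference: simple transitivity of $W_\aff$ on alcoves together with the identification of the walls of $A_\circ$ with the reflections in $S_\aff$ turns alcove paths starting at $A_\circ$ into words in $S_\aff$, after which reduced paths correspond to reduced expressions because the minimal path length is $\ell(v)$ by the definition of the length function.
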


From now on, we fix $\lm \in \Lm^+$. Let $\AP(\lm)$ denote the set of reduced alcove paths from $A_\circ$ to $A_\circ - \lm$. Let $\Pi = (A_0,\ldots,A_s) \in \AP(\lm)$ with $\Gamma(\Pi) = (\beta_1,\ldots,\beta_s)$.

\begin{lem}
Let $i \in \{ 1,\ldots,s \}$. Then, we have
$$
l_i = - \sharp \{ j < i \mid \beta_j = \beta_i \}.
$$
\end{lem}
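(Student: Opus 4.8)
The plan is to understand, for each positive root $\beta$, how $\Pi$ crosses the hyperplanes $H_{\beta,k}$, $k\in\Z$. For an alcove $A$ and $\beta\in\Phi^+$, write $\ell_\beta(A)$ for the unique integer $k$ with $k<(v,\beta^\vee)<k+1$ for $v\in A$; this makes sense because $A$ meets no $H_{\beta,k}$. The definition of $A_\circ$ gives $\ell_\beta(A_0)=0$ for all $\beta$, while a point of $A_s=A_\circ-\lm$ has the form $v-\lm$ with $0<(v,\beta^\vee)<1$, so $\ell_\beta(A_s)=-(\lm,\beta^\vee)$. Along $\Pi$, the value of $\ell_\beta$ can change only at a step $j$ with $\beta_j=\beta$: the common facet $F_j$ lies on the single hyperplane $H_{\beta_j,l_j}$, so $A_{j-1}$ and $A_j$ lie on the same side of every hyperplane other than $H_{\beta_j,l_j}$, and when $\beta_j=\beta$ the value $\ell_\beta$ changes by exactly $\pm1$, with $F_j\subset H_{\beta,\max(\ell_\beta(A_{j-1}),\ell_\beta(A_j))}$.

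Next I would count the $\beta$-crossings. Since $\ell_\beta$ runs from $0$ to $-(\lm,\beta^\vee)$ in steps of size $\pm1$, there are at least $(\lm,\beta^\vee)$ indices $j$ with $\beta_j=\beta$; summing over $\beta\in\Phi^+$, the length $s$ of $\Pi$ is at least $\sum_{\beta\in\Phi^+}(\lm,\beta^\vee)$. On the other hand $\Pi$ is reduced, i.e.\ of minimal length among alcove paths from $A_\circ$ to $A_\circ-\lm$, and there is such a path of length exactly $\sum_{\beta\in\Phi^+}(\lm,\beta^\vee)$ (this is the length of the reduced expression attached to $\Pi$ by Lemma~\ref{reduced alcove path and reduced expression}; equivalently it is the number of hyperplanes separating $A_\circ$ from $A_\circ-\lm$, which for fixed $\beta$ are precisely the $H_{\beta,k}$ with $1-(\lm,\beta^\vee)\le k\le 0$, giving $(\lm,\beta^\vee)$ of them). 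Hence both inequalities are equalities: for every $\beta\in\Phi^+$ there are exactly $(\lm,\beta^\vee)$ indices $j$ with $\beta_j=\beta$, and therefore the subsequence of values of $\ell_\beta$ at the successive $\beta$-crossings decreases by exactly $1$ at each step, since a walk of $(\lm,\beta^\vee)$ steps of size $\pm1$ from $0$ to $-(\lm,\beta^\vee)$ has all steps equal to $-1$.

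To finish, fix $i$ and set $\beta:=\beta_i$, $r:=\sharp\{j<i\mid\beta_j=\beta_i\}$. By the previous paragraph, immediately after the $r$-th $\beta$-crossing the value of $\ell_\beta$ equals $-r$, and it does not change between that crossing and step $i$; thus $\ell_\beta(A_{i-1})=-r$ and $\ell_\beta(A_i)=-(r+1)$. Consequently $F_i\subset H_{\beta,-r}$, that is, $l_i=-r=-\sharp\{j<i\mid\beta_j=\beta_i\}$, which is the assertion.

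The only nonroutine ingredient is the matching of the two length bounds in the second paragraph — equivalently, the standard fact that a reduced alcove path crosses each hyperplane at most once and crosses exactly the hyperplanes separating its endpoints. In the write-up this can be invoked directly from the theory of minimal galleries in the Coxeter complex of $W_\aff$, or obtained from Lemma~\ref{reduced alcove path and reduced expression} together with the translation-length formula $\ell(t_{-\lm})=\sum_{\beta\in\Phi^+}(\lm,\beta^\vee)$; everything else is bookkeeping with the functions $\ell_\beta$.
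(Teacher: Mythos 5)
Your argument is correct, and at bottom it rests on the same standard fact as the paper's own two-line proof: a reduced alcove path crosses precisely the hyperplanes separating its endpoints, each exactly once. The paper applies this directly to the prefix $(A_0,\ldots,A_i)$ (a prefix of a reduced path is reduced): the hyperplanes $H_{\beta_i,k}$ lying between $A_\circ$ and $A_i$ are exactly those with $l_i\le k\le 0$, and each accounts for a unique index $j\le i$ with $\beta_j=\beta_i$, which gives the formula immediately. You instead argue globally: the level walk of $\ell_\beta$ from $0$ to $-(\lm,\beta^\vee)$ forces at least $(\lm,\beta^\vee)$ crossings for each root, and matching the resulting lower bound $\sum_{\beta\in\Phi^+}(\lm,\beta^\vee)$ against the minimal possible length shows that every crossing steps down by exactly $1$, after which $l_i=-\sharp\{j<i\mid\beta_j=\beta_i\}$ is read off. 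This is a bit more self-contained, since the monotonicity is derived from a counting argument rather than quoted, but it still requires identifying the minimal length with the number of separating hyperplanes, which is the same minimal-gallery input the paper uses silently. One small imprecision: $t_{-\lm}$ need not lie in $W_\aff$ when $\lm$ is not in the root lattice, so the cleaner justification is the other one you offer, namely the direct count of separating hyperplanes (equivalently the length of the element $v_\lm\in W_\aff$ with $v_\lm A_\circ=A_\circ-\lm$).
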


\begin{proof}
Since $\Pi$ is reduced, a hyperplane of the form $H_{\beta_i,k}$ lies between $A_0 = A_\circ$ and $A_i$ if and only if $l_i \leq k \leq 0$. Then, for each such $k$, there is a unique $j \leq i$ such that $\beta_j = \beta_i$ and $l_j = k$. Thus, the assertion follows.
\end{proof}

The sequence of positive roots $\Gamma(\Pi)$ is characterized by the following conditions:

\begin{prop}[{\cite[Proposition 4.4]{LP08}}]\label{Characterization of chain}
Let $\beta_1,\ldots,\beta_s \in \Phi^+$. Then, there exists $\Pi \in \AP(\lm)$ such that $\Gamma(\Pi) = (\beta_1,\ldots,\beta_s)$ if and only if the following two conditions are satisfied:
\begin{itemize}
\item For each $\beta \in \Phi^+$, we have $\sharp \{ i \mid \beta_i = \beta \} = (\lm,\beta^\vee)$,
\item For each $\alpha,\beta,\gamma \in \Phi^+$ such that $\gamma^\vee = \alpha^\vee + \beta^\vee$, consider the subsequence $(\beta_{i_1},\ldots,\beta_{i_k})$ of $(\beta_1,\ldots,\beta_s)$ consisting of $\alpha,\beta,\gamma$. Then, $\beta_{i_m} \in \{ \alpha,\beta \}$ if $m$ is odd, while $\beta_{i_m} = \gamma$ if $m$ is even.
\end{itemize}
\end{prop}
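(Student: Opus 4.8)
The plan is to route everything through Lemma~\ref{reduced alcove path and reduced expression}, which identifies $\AP(\lm)$ with the set of reduced expressions of $t_{-\lm}$ in $W_\aff$, and then to extract both conditions from the combinatorics of the affine hyperplane arrangement, reducing the second condition to rank-two sub-root-systems.

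For the ``only if'' direction, let $\Pi = (A_0,\ldots,A_s) \in \AP(\lm)$ with $\Gamma(\Pi) = (\beta_1,\ldots,\beta_s)$. Since $\Pi$ is reduced it crosses each affine hyperplane at most once, and the hyperplanes it crosses are exactly those separating $A_\circ$ from $A_\circ - \lm$. Fixing $\beta \in \Phi^+$ and comparing the ranges of $(\cdot,\beta^\vee)$ on $A_\circ$ (namely $(0,1)$) and on $A_\circ - \lm$ (namely $(-(\lm,\beta^\vee),\,1-(\lm,\beta^\vee))$), one sees that $H_{\beta,k}$ separates the two alcoves precisely when $1-(\lm,\beta^\vee) \le k \le 0$; there are $(\lm,\beta^\vee)$ such $k$, which is the first condition. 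For the second, fix $\alpha,\beta,\gamma \in \Phi^+$ with $\gamma^\vee = \alpha^\vee + \beta^\vee$; these span a rank-two sub-root-system $\Psi$ whose positive part is contained in $\{\alpha,\beta,\gamma\}$ (of type $A_2$ in type $A$). Projecting $E$ onto the span of $\Psi$, the hyperplanes $H_{\delta,k}$ with $\delta \in \Psi^+$ become the affine arrangement of $\Psi$ in that plane, and two adjacent alcoves of $E$ project either onto the same $\Psi$-alcove (if the crossed wall is not a $\Psi$-hyperplane) or onto two adjacent ones. Discarding the stationary steps, $\Pi$ projects onto an alcove path in the $\Psi$-arrangement that crosses each $\Psi$-hyperplane separating the images of $A_\circ$ and $A_\circ - \lm$ exactly once, hence is reduced there. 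Thus it suffices to verify the second condition when $\Phi$ itself has rank two, which is a finite case check: inspecting a geodesic gallery in the rank-two affine arrangement (the triangular lattice in type $A_2$) shows that the crossings of the three directions necessarily alternate as $X,\gamma,X,\gamma,\ldots$, beginning with some $X \in \{\alpha,\beta\}$, which is exactly the asserted parity pattern.

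For the ``if'' direction, suppose $(\beta_1,\ldots,\beta_s)$ satisfies the two conditions. The first condition forces the levels $l_i := -\sharp\{ j < i \mid \beta_j = \beta_i \}$ (cf.\ the lemma preceding Proposition~\ref{Characterization of chain}), so the putative gallery must cross $H_{\beta_i,l_i}$ at step $i$; the task is to show these $s$ distinct separating hyperplanes can be crossed \emph{in this order} by a reduced gallery. My approach is to connect $(\beta_1,\ldots,\beta_s)$ to the $\Gamma$-sequence of a fixed reference path $\Pi_0 \in \AP(\lm)$ (coming, say, from the reduced word of $\bfi_A$-type) through ``$d$-moves'': either interchanging two consecutive terms $\beta_i,\beta_{i+1}$ for which the reflections $s_{\beta_i,l_i},s_{\beta_{i+1},l_{i+1}}$ commute, or reversing a maximal contiguous block whose roots all lie in one rank-two sub-root-system. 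Each such move corresponds to a braid relation among reduced expressions of $t_{-\lm}$, so the Tits/Matsumoto connectedness theorem for reduced expressions closes the argument once the moves are available.

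The main obstacle is precisely the last point: showing that conditions (1) and (2) are strong enough to force $d$-move equivalence with $\Pi_0$. I would argue by an exchange/induction on the first position at which $(\beta_i)$ and $\Gamma(\Pi_0)$ disagree: condition (2) is what guarantees that the term which ``should'' occupy that position can be carried there by legal moves, and that the only obstructions to interchanging consecutive terms are exactly the rank-two blocks, which reduces the verification once more to the rank-two picture used in the ``only if'' direction. (One can also try a direct induction on $s$, peeling off $\beta_1$---which condition (2) forces to be a simple root $\alpha_i$ in type $A$---and setting $A_1 = s_i A_\circ$; but then the residual path from $A_1$ to $A_\circ - \lm$ is no longer of the shape ``$A_\circ$ to $A_\circ - \mu$'', so one must formulate and track a shifted problem, and pinning down the induction hypothesis is itself the delicate step.)
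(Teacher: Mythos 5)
First, note that the paper itself offers no proof of this proposition: it is imported verbatim from \cite[Proposition 4.4]{LP08}, so your argument has to stand entirely on its own. Your ``only if'' direction is essentially sound: a reduced gallery crosses exactly the hyperplanes separating $A_\circ$ from $A_\circ-\lm$, once each, which gives the counting condition, and the interlacing condition does reduce, by projecting along $\operatorname{span}(\alpha,\beta)^{\perp}$, to a statement about reduced galleries in a rank-two affine arrangement. Two caveats, though: the proposition is stated for a general finite root system, and the rank-two subsystem $\Phi\cap\operatorname{span}(\alpha,\beta)$ need not have positive part $\{\alpha,\beta,\gamma\}$ --- for a triple with $\gamma^\vee=\alpha^\vee+\beta^\vee$ inside a $B_2$ or $G_2$ subsystem there are additional positive roots whose hyperplanes the projected gallery also crosses --- and the rank-two verification is not literally a ``finite case check'', since the number of separating hyperplanes grows with $\lm$; you still owe a uniform argument covering the non-simply-laced rank-two types, where the condition must be read on the coroot side.

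The genuine gap is the ``if'' direction, which is the substantive half of the statement. Your plan --- connect a sequence satisfying the two conditions to a reference $\Gamma(\Pi_0)$ by swaps of consecutive commuting reflections and reversals of rank-two blocks, then invoke Matsumoto --- hinges on proving that the two conditions force such a chain of moves to exist (and that each move preserves the two conditions and corresponds to a braid/Yang--Baxter move of reduced alcove paths). You explicitly leave this connectivity unproved (``the main obstacle'', ``pinning down the induction hypothesis is itself the delicate step''), but it is precisely the content of the proposition: nothing in the sketch explains how the interlacing condition produces the required legal moves at the first position of disagreement, nor why the procedure terminates. The fallback induction (peel off $\beta_1$, which the interlacing condition does force to be simple) runs into exactly the problem you acknowledge: after crossing $H_{\beta_1,0}$ the remaining path joins $s_{\beta_1}A_\circ$ to $A_\circ-\lm$, so the induction must be formulated for reduced alcove paths with general endpoints of the form $wA_\circ-\mu$, together with a correspondingly generalized interlacing condition --- this is what \cite[Propositions 10.2--10.3]{LP08}, cited later in this paper for the extended model, actually supply. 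Without formulating and proving that generalized statement (or the move-connectivity claim), the proposal establishes only the easy direction.
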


\begin{rem}\normalfont\label{Remark on betaik 1}
In the second condition, we have $k = (\lm,\alpha^\vee) + (\lm,\beta^\vee) + (\lm,\gamma^\vee) = 2(\lm,\gamma^\vee)$. In particular, $\beta_{i_k} = \gamma$.
\end{rem}

\begin{defi}\normalfont
Let $\Pi \in \AP(\lm)$ with $\Gamma(\Pi) = (\beta_1,\ldots,\beta_s)$. An admissible subset associated to $\Pi$ is a subset $J = \{ j_1,\ldots, j_t \}$ of $\{ 1,\ldots,s \}$ such that $j_1 < \cdots < j_t$ and that there exists a saturated chain
$$
e \rightarrow s_{\beta_{j_1}} \rightarrow s_{\beta_{j_1}} s_{\beta_{j_2}} \rightarrow \cdots \rightarrow s_{\beta_{j_1}} s_{\beta_{j_2}} \cdots s_{\beta_{j_t}}
$$
in the Bruhat graph of $W$, i.e., for each $k \in \{ 1,\ldots,t \}$, we have $\ell(s_{\beta_{j_1}} \cdots s_{\beta_{j_k}}) = k$. We understand that the empty set is an admissible subset. Let $\clA(\Pi)$ denote the set of admissible subsets associated to $\Pi$.
\end{defi}

\begin{rem}\normalfont
When we consider a subset $J = \{ j_1,\ldots,j_t \} \subset \{ 1,\ldots,s \}$, we always assume that $j_1 < \cdots < j_t$.
\end{rem}

\begin{ex}\normalfont
Suppose that our root system is of type $A_2$, and $\lm = (2,1,0)$. Let
\begin{align}
\begin{split}
&\Pi_1 = (A_\circ,s_1 A_\circ, s_1s_2 A_\circ,s_1s_2s_1 A_\circ, s_1s_2s_1s_0A_\circ), \\
&\Pi_2 = (A_\circ, s_2 A_\circ, s_2s_1 A_\circ, s_2s_1s_2 A_\circ, s_2s_1s_2s_0A_\circ).
\end{split} \nonumber
\end{align}
Then, these are elements of $\AP(\lm)$. We have
\begin{align}
\begin{split}
&\Gamma(\Pi_1) = (\alpha_1,\alpha_1+\alpha_2,\alpha_2,\alpha_1+\alpha_2), \\
&\Gamma(\Pi_2) = (\alpha_2,\alpha_1+\alpha_2,\alpha_1,\alpha_1+\alpha_2).
\end{split} \nonumber
\end{align}
The admissible subsets are the following:
\begin{align}
\begin{split}
&\clA(\Pi_1) = \{ \emptyset, \{1\}, \{3\}, \{1,2\}, \{1,3\}, \{1,4\}, \{3,4\}, \{1,2,3\} \}, \\
&\clA(\Pi_2) = \{ \emptyset, \{3\}, \{1\}, \{1,3\}, \{1,2\}, \{3,4\}, \{1,4\}, \{1,2,3\} \}.
\end{split} \nonumber
\end{align}
\end{ex}

Let $\Pi = (A_0,\ldots,A_s) \in \AP(\lm)$, $J = \{ j_1, \ldots, j_t \} \in \clA(\Pi)$. For $1 \leq k < l \leq t$, set
$$
w_{k,l}(J) := s_{F_{j_k}} s_{F_{j_{k+1}}} \cdots s_{F_{j_l}} \in W_{\aff}, \qu \ol{w}_{k,l}(J) := s_{\beta_{j_k}} s_{\beta_{j_{k+1}}} \cdots s_{\beta_{j_l}} \in W.
$$
When $(k,l) = (1,t)$, we abbreviate $w_{1,t}(J)$ and $\ol{w}_{1,t}(J)$ as $w(J)$ and $\ol{w}(J)$, respectively.

\subsection{Galleries}
For our purposes, it is convenient to rewrite the admissible subsets in terms of galleries, which we recall now.

\begin{defi}\normalfont
A gallery is a sequence $\gamma = (A_0,F_1,A_1,F_2,A_2,\ldots,F_s,A_s,\mu)$ satisfying the following:
\begin{itemize}
\item $A_0,A_1,\ldots,A_s$ are alcoves.
\item $F_i$ is a common facet of $A_{i-1}$ and $A_i$.
\item $\mu \in \Lm$ is a vertex of (the closure of) $A_s$.
\end{itemize}
Given a gallery $\gamma$, set
$$
J(\gamma) := \{ i \mid A_{i-1} = A_i \}.
$$
\end{defi}

\begin{ex}\normalfont
Let $\Pi = (A_0,\ldots,A_s)$ be an alcove path. Let $F_i$ denote the unique common facet of $A_{i-1}$ and $A_i$. Then, for each vertex $\mu \in \Lm$ of $A_s$, the sequence $\gamma(\Pi;\mu) := (A_0,F_1,A_1,\ldots,F_s,A_s,\mu)$ is a gallery. In this case, we have $J(\gamma(\Pi;\mu)) = \emptyset$.
\end{ex}

\begin{defi}\normalfont
Let $\gamma = (A_0,F_1,A_1,\ldots,F_s,A_s,\mu)$ be a gallery, and $j \in \{ 1,\ldots,s \}$.
\begin{enumerate}
\item Let $\phi_j(\gamma) = (A'_0,F'_1,A'_1,\ldots,F'_s,A'_s,\mu')$ be the gallery defined by
\begin{enumerate}
\item $A'_i := \begin{cases}
A_i \qu & \IF 0 \leq i < j, \\
s_{F_j}(A_i) \qu & \IF j \leq i \leq s.
\end{cases}$
\item $F'_i := \begin{cases}
F_i \qu & \IF 1 \leq i < j, \\
s_{F_j}(F_i) \qu & \IF j \leq i \leq s.
\end{cases}$
\item $\mu' := s_{F_j}(\mu)$.
\end{enumerate}
\item For a subset $J = \{ j_1, \ldots, j_t \} \subset \{ 1,\ldots,s \}$ with $j_1 < \cdots < j_t$, set
$$
\phi_J(\gamma) := \phi_{j_1} \cdots \phi_{j_t}(\gamma).
$$
\end{enumerate}
Here, we understand that $\phi_J(\gamma) = \gamma$ if $J = \emptyset$.
\end{defi}

For each alcove path $\Pi = (A_0,A_1,\ldots,A_s)$ and a vertex $\mu$ of $A_s$, let $G(\Pi;\mu)$ denote the set of galleries of the form $\phi_J(\gamma(\Pi;\mu))$, $J \subset \{ 1,\ldots,s \}$. Since $J(\phi_J(\gamma(\Pi;\mu))) = J$, we may identify $\phi_J(\gamma(\Pi;\mu))$ with $J$.

\subsection{Crystal structure}
Throughout this subsection, we fix an alcove path $\Pi = (A_0,A_1,\ldots,A_s)$ and a vertex $\mu$ of $A_s$. We identify $\phi_J(\gamma(\Pi;\mu)) \in G(\Pi;\mu)$ with $J \subset \{ 1,\ldots,s \}$. Now, we define maps $\wt : G(\Pi;\mu) \rightarrow \Lm$ and $\Etil_p,\Ftil_p : G(\Pi;\mu) \rightarrow G(\Pi;\mu) \sqcup \{0\}$ (here, $0$ is a formal symbol) for each $p \in I$. Let $J \in G(\Pi;\mu)$ and $p \in I$. Let us write $J = (A^J_0,F^J_1,A^J_1,\ldots,F^J_s,A^J_s,\mu^J)$. First, we define $\wt : G(\Pi;\mu) \rightarrow \Lm$ by
$$
\wt(J) := -\mu^J.
$$

Let $\beta^J_i \in \Phi^+$ and $l^J_i \in \Z$ be such that $F^J_i \subset H_{\beta^J_i,l^J_i}$. Set
\begin{itemize}
\item $I(J,p) := \{ i \in \{ 1,\ldots,s \} \mid \beta^J_i = \alpha_p \}$.
\item $L(J,p) := \{ l^J_i \}_{i \in I(J,p)} \cup \{ (\mu^J,\alpha_p^\vee) \}$.
\item $M(J,p) := \min L(J,p)$.
\end{itemize}
We define $\Ftil_p(J) \in G(\Pi;\mu) \sqcup \{0\}$ by
$$
\Ftil_p(J) := \begin{cases}
0 \qu & \IF M(J,p) \geq 0, \\
(J \setminus \{m_F\}) \cup \{k_F\} \qu & \IF M(J,p) < 0 \AND \{ i \in I(J,p) \mid l^J_i = M(J,p) \} \neq \emptyset, \\
J \sqcup \{k'\} \qu & \IF M(J,p) < 0 \AND \{ i \in I(J,p) \mid l^J_i = M(J,p) \} = \emptyset,
\end{cases}
$$
where
\begin{align}
\begin{split}
m_F &:= \min \{ i \in I(J,p) \mid l^J_i = M(J,p) \}, \\
k_F &:= \max(I(J,p) \cap \{ 1,\ldots,m_F-1 \}), \\
k' &:= \max I(J,p).
\end{split} \nonumber
\end{align}

Also, we define $\Etil_p(J) \in G(\Pi;\mu) \sqcup \{0\}$ by
$$
\Etil_p(J) := \begin{cases}
0 \qu & \IF M(J,p) = (\mu^J,\alpha_p^\vee), \\
(J \setminus \{k_E\}) \cup \{m_E\} \qu & \IF M(J,p) < (\mu^J,\alpha_p^\vee) \AND k_E \neq k', \\
J \setminus \{k'\} \qu & \IF M(J,p) < (\mu^J,\alpha_p^\vee) \AND k_E = k',
\end{cases}
$$
where
\begin{align}
\begin{split}
k_E &:= \max \{ i \in I(J,p) \mid l^J_i = M(J,p) \}, \\
m_E &:= \min(I(J,p) \cap \{ k_E+1,\ldots,s \}).
\end{split} \nonumber
\end{align}
Note that by definition, it always holds that $M(J,p) \leq (\mu^J,\alpha_p^\vee)$.

\begin{rem}\normalfont
When $\Pi \in \AP(\lm)$, $J \in \clA(\Pi)$, and $\mu = -\lm$, the maps $\wt,\Etil_p,\Ftil_p$ just defined above are the same as those defined in \cite[Section 3.7]{L07} (note that our $l^J_i$ and $M(J,p)$ are the negative of those in \cite{L07}).
\end{rem}

\begin{theo}[{\cite[Corollary 4.9]{L07}}]
Let $\lm \in \Lm^+$, and $\Pi \in \AP(\lm)$. Then $\clA(\Pi)$, regarded as a subset of $G(\Pi;-\lm)$, is closed under $\Etil_p,\Ftil_p$, $p \in I$. Namely, for each $J \in \clA(\Pi)$, we have $\Etil_p(J),\Ftil_p(J) \in \clA(\Pi) \sqcup \{0\}$. Moreover, $\clA(\Pi)$ equipped with the maps $\wt,\Etil_p,\Ftil_p$, $p \in I$ is a crystal isomorphic to $\clB(\lm)$ in a way such that $\emptyset \in \clA(\Pi)$ corresponds to $b_\lm \in \clB(\lm)$.
\end{theo}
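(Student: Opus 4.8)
The statement bundles three assertions: that $\clA(\Pi)$ is stable under $\Etil_p$ and $\Ftil_p$; that $(\clA(\Pi),\wt,\Etil_p,\Ftil_p)$ satisfies the crystal axioms; and that the resulting crystal is isomorphic to $\clB(\lm)$ with $\emptyset$ playing the role of $b_\lm$. I would prove them in this order. The organizing observation is that, for a fixed $p\in I$, the operators $\Etil_p,\Ftil_p$ see only the positions $i$ with $\beta^J_i=\alpha_p$ together with the boundary quantity $\la\mu^J,\alpha_p^\vee\ra$. So the first step is to repackage this local data: reading along the gallery, the levels $l^J_i$ for $i\in I(J,p)$, followed by $\la\mu^J,\alpha_p^\vee\ra$, form a finite sequence on $\Z$ whose consecutive terms differ by at most one and whose minimum is $M(J,p)$; in this picture $\Ftil_p$ is the standard bracketing rule---lower by one unit the last term that attains the minimum, or adjoin a new, lower term at the end if the minimum is realized only at the front---and $\Etil_p$ is its inverse. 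This reduces closure and the crystal axioms to rank-one bookkeeping on this sequence.

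For closure one must show that the fold/unfold prescribed by $\Ftil_p$---replacing $J$ by $(J\setminus\{m_F\})\cup\{k_F\}$ or by $J\sqcup\{k'\}$---carries $\phi_J(\gamma(\Pi;-\lm))$ to $\phi_{J'}(\gamma(\Pi;-\lm))$ with $J'\in\clA(\Pi)$, i.e.\ $\ell(\ol{w}_{1,l}(J'))=l$ for every $l$. Reflecting the tail of the gallery by $s_{\alpha_p}$ induces an elementary move on the Bruhat graph of $W$, and Proposition~\ref{Characterization of chain}---in particular the condition that along each subsequence of $\Gamma(\Pi)$ indexed by a triple $\alpha,\beta,\gamma$ with $\gamma^\vee=\alpha^\vee+\beta^\vee$ the odd entries lie in $\{\alpha,\beta\}$ and the even ones equal $\gamma$---is precisely what makes this move carry a saturated chain to a saturated chain; one controls how $\ell(\ol{w}_{1,l})$ behaves as $l$ crosses $k_F$ and $m_F$ via the exchange and lifting properties of the Coxeter system $(W,S)$. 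This is the technical heart of the proof. Granted closure, the crystal axioms follow from the sequence picture: axiom~(1) because $\Etil_p$ and $\Ftil_p$ are mutually inverse bracketing moves; axiom~(2) because a single fold or unfold at a hyperplane $H_{\alpha_p,\bullet}$ changes $\mu^J$ by $\mp\alpha_p$, hence $\wt$ by $\pm\alpha_p$; and axiom~(3) by reading $\vphi_p(J)$ and $\vep_p(J)$ off the sequence and using $\wt(J)=-\mu^J$.

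It remains to identify the crystal with $\clB(\lm)$. First, $\emptyset$ is a highest weight element of weight $\lm$: one has $\wt(\emptyset)=\lm$, and since $l^\emptyset_i=-\sharp\{j<i\mid\beta_j=\beta_i\}$ the levels with $\beta_i=\alpha_p$ are $0,-1,\dots,-(\la\lm,\alpha_p^\vee\ra-1)$, so that $M(\emptyset,p)=-\la\lm,\alpha_p^\vee\ra=\la\mu^\emptyset,\alpha_p^\vee\ra$ and hence $\Etil_p(\emptyset)=0$ for all $p$. To upgrade the crystal axioms to an isomorphism I would verify Stembridge's local axioms (see, e.g., \cite{BS17}) for the crystal graph of $\clA(\Pi)$; the two-color relations translate---again via the sequence picture, and governed by the triple condition of Proposition~\ref{Characterization of chain} applied to pairs $\alpha_p,\alpha_q$ and their positive integer combinations---into exactly the required local identities. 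Then each connected component of $\clA(\Pi)$ is isomorphic to some $\clB(\nu)$, and the cancellation-free character formula $\sum_{J\in\clA(\Pi)}e^{\wt(J)}=s_\lm$ of Lenart--Postnikov~\cite{LP08} forces a single component, isomorphic to $\clB(\lm)$, in which $\emptyset$ is then the highest weight vertex corresponding to $b_\lm$. In the non-simply-laced case one instead matches $\Etil_p,\Ftil_p$ with Littelmann's root operators on the piecewise-linear path naturally attached to a folded gallery, deducing the isomorphism from the path model; alternatively, one invokes \cite{LP08, L07} directly. The main obstacle throughout is the closure step: verifying that the elementary Bruhat-graph move concealed in $\Ftil_p$ preserves admissibility, for which the precise combinatorial description of $\Gamma(\Pi)$ in Proposition~\ref{Characterization of chain} is indispensable.
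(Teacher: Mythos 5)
This theorem is not proved in the paper at all: it is imported verbatim from Lenart \cite{L07} (Corollary 4.9 there, with the operators of Section 3.7 of that paper), so there is no in-paper argument to measure your sketch against. Judged on its own terms, your outline runs along the same general lines as the original proofs in \cite{LP08} and \cite{L07}: reduce to the rank-one data attached to $\alpha_p$ (the levels $l^J_i$, $i \in I(J,p)$, together with $(\mu^J,\alpha_p^\vee)$), prove closure by showing that the fold/unfold move preserves saturated chains in the Bruhat graph using the triple condition of Proposition \ref{Characterization of chain} together with Bruhat-order lifting (this is where lemmas of the type in \cite{BFP99} enter), and then identify the resulting crystal with $\clB(\lm)$. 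Your observation that $\emptyset$ has weight $\lm$ and is annihilated by every $\Etil_p$ is correct, and using the cancellation-free character formula of \cite{LP07, LP08} to pin down the decomposition is legitimate and non-circular, since that formula is established independently of the crystal operators.

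As a proof, however, the proposal has genuine gaps rather than omitted routine details. (i) The claim that the level sequence has consecutive differences at most one is itself a lemma (it requires the observation that crossing a wall $H_{\beta,l}$ with $\beta \neq \alpha_p$ does not change the $\alpha_p$-strip containing the alcove), and your paraphrase of $\Ftil_p$ as ``lower the last term attaining the minimum'' does not match the definition, in which $m_F$ is the \emph{first} attainment of $M(J,p)$ (and $k_E$ the last, for $\Etil_p$); this matters if the bracketing picture is to carry the verification of the crystal axioms. (ii) The closure step --- that $(J\setminus\{m_F\})\cup\{k_F\}$ and $J\sqcup\{k'\}$ are again admissible --- is exactly the technical heart, and ``one controls $\ell(\ol{w}_{1,l})$ via the exchange and lifting properties'' is a pointer, not an argument; the precise diamond-type statements about the Bruhat graph must be formulated and proved (or cited precisely from \cite{BFP99}). (iii) The identification with $\clB(\lm)$ via Stembridge's local axioms is only available in simply-laced type, and verifying those axioms for the two-color subgraphs of $\clA(\Pi)$ is again a substantial computation that is merely asserted; moreover, your non-simply-laced fallback ``invoke \cite{LP08, L07} directly'' is circular here, since the statement to be proved \emph{is} the result of \cite{L07}. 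So the sketch is a plausible roadmap consistent with the literature, but the two load-bearing steps (closure and the local-axiom verification) remain unproved.
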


Now, we collect basic properties of the crystal structure of $\clA(\Pi)$ which are needed for later argument; see \cite{L07} and \cite{LP08} for details.

\begin{prop}\label{Properties of alcove paths model}
Let $\Pi = (A_0,\ldots,A_s) \in \AP(\lm)$, $J = \{ j_1,\ldots, j_t \} \in \clA(\Pi)$, $p \in I$. Then, the following hold:
\begin{enumerate}
\item\label{Properties of alcove paths model 1} $\wt(J) = -w(J)(-\lm)$.
\item\label{Properties of alcove paths model 2} $M(J,p) \leq 0$.
\item\label{Properties of alcove paths model 3} $M(J,p) = \min(\{ l^J_i \mid i \in \Itil(J,p) \} \cup \{ (-\wt(J),\alpha_p^\vee) \})$, where $\Itil(J,p) := I(J,p) \cap J$.
\item\label{Properties of alcove paths model 4} $\vphi_p(J) = -M(J,p)$.
\item\label{Properties of alcove paths model 5} $\vep_p(J) = (-\wt(J),\alpha_p^\vee) - M(J,p)$.
\item\label{Properties of alcove paths model 6} If $M(J,p) < 0 \AND \{ i \in I(J,p) \mid l^J_i = M(J,p) \} \neq \emptyset$, then $m_F \in J$, $k_F \notin J$, and $\ol{w}(\Ftil_p(J)) = \ol{w}(J)$.
\item\label{Properties of alcove paths model 7} If $M(J,p) < 0 \AND \{ i \in I(J,p) \mid l^J_i = M(J,p) \} = \emptyset$, then $k' \notin J$ and $\ol{w}(\Ftil_p(J)) = s_p\ol{w}(J) > \ol{w}(J)$.
\item\label{Properties of alcove paths model 8} If $M(J,p) < (\mu^J,\alpha_p^\vee) \AND k_E \neq k'$, then $k_E \in J$, $m_E \notin J$, and $\ol{w}(\Etil_p(J)) = \ol{w}(J)$.
\item\label{Properties of alcove paths model 9} If $M(J,p) < (\mu^J,\alpha_p^\vee) \AND k_E = k'$, then $k' \in J$ and $\ol{w}(\Etil_p(J)) = s_p\ol{w}(J) < \ol{w}(J)$.
\end{enumerate}
\end{prop}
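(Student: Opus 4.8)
For Proposition~\ref{Properties of alcove paths model}, the plan is to establish (1) directly, to deduce (5) from (4) via the crystal axioms, and to obtain (2), (3), (4) and (6)--(9) from the combinatorics of the folded gallery $\phi_J(\gamma(\Pi;-\lm))$ as developed in \cite{L07} and \cite{LP08}; below I indicate the shape of each argument.

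For (1), I would induct on $t=|J|$. Writing $J=\{j_1\}\sqcup J'$ with $J'=\{j_2,\dots,j_t\}$, we have $\phi_J(\gamma(\Pi;-\lm))=\phi_{j_1}\bigl(\phi_{J'}(\gamma(\Pi;-\lm))\bigr)$, and since $j_1<j_2<\dots<j_t$ each of $\phi_{j_2},\dots,\phi_{j_t}$ leaves the positions below $j_2$ untouched, so the facet of $\phi_{J'}(\gamma(\Pi;-\lm))$ at position $j_1$ is still $F_{j_1}$. Hence the outermost fold $\phi_{j_1}$ reflects the terminal vertex by $s_{F_{j_1}}$, and the inductive hypothesis gives $\mu^J=s_{F_{j_1}}s_{F_{j_2}}\cdots s_{F_{j_t}}(-\lm)=w(J)(-\lm)$, so that $\wt(J)=-\mu^J=-w(J)(-\lm)$. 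Granting (4), property (5) is forced by the crystal axiom $\vphi_p(J)=\vep_p(J)+(\wt(J),\alpha_p^\vee)$ (valid since the Theorem, from \cite[Corollary 4.9]{L07}, says $\clA(\Pi)$ is a crystal): indeed $\vep_p(J)=\vphi_p(J)-(\wt(J),\alpha_p^\vee)=-M(J,p)+(-\wt(J),\alpha_p^\vee)$.

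The remaining items (2), (3), (4) I would read off from the ``$\alpha_p$-level sequence'' of $\phi_J(\gamma(\Pi;-\lm))$: list, in increasing order of $i$, the levels $l^J_i$ at the positions $i\in I(J,p)$, followed by the terminal value $(\mu^J,\alpha_p^\vee)=(-\wt(J),\alpha_p^\vee)$. A position $i\in I(J,p)\setminus J$ is a genuine crossing of a hyperplane $H_{\alpha_p,l^J_i}$, so the level changes by $\pm 1$ there, whereas a position $i\in\Itil(J,p)=I(J,p)\cap J$ is a fold on such a hyperplane. Analysing how the level evolves along the gallery shows that $M(J,p)\le 0$ (property (2)), that $M(J,p)$ is already attained on $\Itil(J,p)\cup\{(-\wt(J),\alpha_p^\vee)\}$ (property (3)), and that $\Ftil_p$ is well defined precisely when $M(J,p)<0$, in which case $M(\Ftil_p(J),p)=M(J,p)+1$, while $\Etil_p$ is well defined precisely when $M(J,p)<(\mu^J,\alpha_p^\vee)$, in which case $M(\Etil_p(J),p)=M(J,p)-1$. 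Iterating $\Ftil_p$ then yields $\vphi_p(J)=\max\{k\ge 0\mid \Ftil_p^k(J)\ne 0\}=-M(J,p)$, which is (4).

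For (6)--(9) I would run the same analysis while also tracking $w_{1,t}(J)\in W_\aff$, hence its image $\ol{w}(J)\in W$, under the fold operations. When $M(J,p)<0$ and the minimum is attained at a facet position, $\Ftil_p$ merely slides a fold from $m_F$ to $k_F$ --- and $m_F\in J$, $k_F\notin J$ are precisely the conditions making this legal --- so, both positions carrying the direction $\alpha_p$, the element $\ol{w}(J)$ is unchanged; this is (6), and (8) is the mirror statement for $\Etil_p$. When instead the minimum is attained only at the terminal value, $\Ftil_p$ creates a new fold at $k'=\max I(J,p)$, whose lift to $W_\aff$ has linear part $s_p$, so $\ol{w}(\Ftil_p(J))=s_p\ol{w}(J)$; since $\Ftil_p(J)=J\sqcup\{k'\}$ has one more element than $J$ while admissibility forces $\ell(\ol{w}(K))=|K|$ for every $K\in\clA(\Pi)$, the length goes up, which is (7), and (9) follows likewise by removing the last fold. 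All the detailed fold bookkeeping behind (2), (3) and (6)--(9) I would simply take from \cite{L07} and \cite{LP08}. The step I expect to be the real obstacle is the increment $M(\Ftil_p(J),p)=M(J,p)+1$, which (4) rests on: one must verify that folding at the leftmost position achieving $M(J,p)$ does not let the folded gallery dip to some new, strictly lower $\alpha_p$-level further along.
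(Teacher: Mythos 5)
The paper itself gives no proof of this proposition: it is stated as a list of known facts with a pointer to \cite{L07} and \cite{LP08}, which is exactly where your sketch also sends the detailed fold bookkeeping for (2)--(4) and (6)--(9), so your route is essentially the paper's. The parts you argue yourself are correct: the unfolding induction for (1) (since $j_1<j_2<\cdots<j_t$, the folds $\phi_{j_2},\ldots,\phi_{j_t}$ leave position $j_1$ untouched, so the terminal vertex is $w(J)(-\lm)$), and the derivation of (5) from (4) via the crystal axiom $\vphi_p(J)=\vep_p(J)+(\wt(J),\alpha_p^\vee)$, which is legitimate once one grants the cited theorem that $\clA(\Pi)$ is a crystal.
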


\subsection{Yang-Baxter moves}\label{subsection, Yang-Baxter move}
Let $\Pi_1,\Pi_2 \in \AP(\lm)$. As we have seen above, both $\clA(\Pi_1)$ and $\clA(\Pi_2)$ are equipped with crystal structures isomorphic to $\clB(\lm)$. In particular, there exists a unique isomorphism $\clA(\Pi_1) \rightarrow \clA(\Pi_2)$ of crystals. Such an isomorphism can be realized as a sequence of Yang-Baxter moves, which we briefly explain now.

Recall from Lemma \ref{reduced alcove path and reduced expression} that each $\Pi \in \AP(\lm)$ corresponds to a reduced expression of $v_\lm$, where $v_\lm \in W_{\aff}$ is such that $v_\lm A_\circ = A_\circ - \lm$. By Matsumoto's theorem, any two reduced expressions of $v_\lm$ can be transformed from one into the other by a sequence of braid moves. The Yang-Baxter moves are the translations of the braid moves in the language of alcove paths.

A sequence of Yang-Baxter moves which transforms $\Pi_1$ into $\Pi_2$ induces an isomorphism $Y : \clA(\Pi_1) \rightarrow \clA(\Pi_2)$ of crystals. For the precise definition of this isomorphism, see \cite[Section 4]{L07}.

\section{Extended alcove path model}
\subsection{Extended alcove path model}\label{subsection, Extended alcove paths model}
In this subsection, we introduce the notion of extended alcove path model, which also gives a combinatorial realization of the highest weight crystals.

Let $\widetilde{\AP}(\lm)$ denote the set of reduced alcove paths from $A_\circ$ to $w_\circ A_\circ - \lm$. Let $\Pi = (A_0,\ldots,A_u) \in \widetilde{\AP}(\lm)$ with $\Gamma(\Pi) = (\beta_1,\ldots,\beta_u)$. Let $l_i \in \Z$ be such that the common facet of $A_{i-1}$ and $A_i$ is contained in the hyperplane $H_{\beta_i,l_i}$. As in the ordinary alcove path model, we have
\begin{align}\label{Level function for extended alcove path}
l_i = - \sharp \{ j < i \mid \beta_j = \beta_i \}
\end{align}
for all $i = 1,\ldots,u$.

By the arguments in \cite[Propositions 10.2--10.3]{LP08}, the sequences $\Gamma(\Pi)$ of positive roots associated to reduced alcove paths $\Pi \in \widetilde{\AP}(\lm)$ are characterized as follows (compare with Proposition \ref{Characterization of chain}).

\begin{prop}\label{Characterization of extended chain}
Let $\beta_1,\ldots,\beta_u \in \Phi^+$. Then, there exists $\Pi \in \widetilde{\AP}(\lm)$ such that $\Gamma(\Pi) = (\beta_1,\ldots,\beta_u)$ if and only if the following two conditions are satisfied:
\begin{itemize}
\item For each $\beta \in \Phi^+$, we have $\sharp \{ i \mid \beta_i = \beta \} = (\lm,\beta^\vee) + 1$.
\item For each $\alpha,\beta,\gamma \in \Phi^+$ such that $\gamma^\vee = \alpha^\vee + \beta^\vee$, consider the subsequence $(\beta_{i_1},\ldots,\beta_{i_k})$ of $(\beta_1,\ldots,\beta_u)$ consisting of $\alpha,\beta,\gamma$. Then, $\beta_{i_m} \in \{ \alpha,\beta \}$ if $m$ is odd, while $\beta_{i_m} = \gamma$ if $m$ is even.
\end{itemize}
\end{prop}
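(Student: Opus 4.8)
\emph{Proof strategy.}
The plan is to run the argument of \cite[Propositions 10.2--10.3]{LP08}, which establishes the analogue of Proposition \ref{Characterization of chain} with the target alcove $A_\circ - \lm$ replaced by $w_\circ A_\circ - \lm$, and to isolate the single ingredient that genuinely changes. By Lemma \ref{reduced alcove path and reduced expression}, $\widetilde{\AP}(\lm)$ is in bijection with the set of reduced expressions of the element of $W_\aff$ carrying $A_\circ$ to $w_\circ A_\circ - \lm$; along any reduced alcove path, each hyperplane separating $A_\circ$ from $w_\circ A_\circ - \lm$ is crossed exactly once and no other hyperplane is crossed, so $\Gamma(\Pi) = (\beta_1,\ldots,\beta_u)$, together with the levels $l_i$ recorded by (\ref{Level function for extended alcove path}), is merely a listing of this multiset of separating hyperplanes. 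Thus the first bullet amounts to counting these hyperplanes, while the second records the combinatorial constraints governing the order in which they can be crossed.

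For the first bullet I would compute the multiset directly. From
$$
A_\circ = \{ v \mid 0 < (v,\alpha^\vee) < 1 \Forall \alpha \in \Phi^+ \},
$$
and, using that $w_\circ$ interchanges $\Phi^+$ and $-\Phi^+$ (so that $w_\circ A_\circ = \{ v \mid -1 < (v,\beta^\vee) < 0 \Forall \beta \in \Phi^+ \}$), from
$$
w_\circ A_\circ - \lm = \{ v \mid -1-(\lm,\beta^\vee) < (v,\beta^\vee) < -(\lm,\beta^\vee) \Forall \beta \in \Phi^+ \},
$$
one reads off that the hyperplanes of direction $\beta$ separating $A_\circ$ from $w_\circ A_\circ - \lm$ are exactly $H_{\beta,k}$ for $-(\lm,\beta^\vee) \leq k \leq 0$. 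This yields $(\lm,\beta^\vee)+1$ of them, which is the first bullet, and also forces them to be crossed in the order $k = 0, -1, \ldots, -(\lm,\beta^\vee)$, in agreement with (\ref{Level function for extended alcove path}). This is the one point where the computation departs from Proposition \ref{Characterization of chain}: there the same reasoning with target $A_\circ - \lm = \{ v \mid -(\lm,\beta^\vee) < (v,\beta^\vee) < 1-(\lm,\beta^\vee) \Forall \beta \in \Phi^+ \}$ produces only $(\lm,\beta^\vee)$ hyperplanes of direction $\beta$.

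The second bullet is a purely local statement about how a reduced alcove path meets the rank $\leq 2$ sub-arrangement formed by the hyperplanes of directions $\alpha,\beta,\gamma$, where $\gamma^\vee = \alpha^\vee + \beta^\vee$; it involves neither the global endpoint nor the count just obtained, so I would reuse the argument of \cite{LP08}, whose mechanism is to project onto the plane spanned by $\alpha,\beta,\gamma$, exploit the incidence $H_{\alpha,k} \cap H_{\beta,l} \subset H_{\gamma,k+l}$, and combine the resulting local picture with the fact that a reduced path never immediately recrosses a hyperplane, thereby forcing the stated alternation. (Unlike in Remark \ref{Remark on betaik 1}, the length of this subsequence is now $k = (\lm,\alpha^\vee)+(\lm,\beta^\vee)+(\lm,\gamma^\vee)+3 = 2(\lm,\gamma^\vee)+3$, which is odd, so the last term $\beta_{i_k}$ lies in $\{\alpha,\beta\}$ rather than equalling $\gamma$.) For the converse, given $\beta_1,\ldots,\beta_u \in \Phi^+$ satisfying the two bullets, one sets $l_i := -\sharp\{ j < i \mid \beta_j = \beta_i \}$ and considers the sequence of hyperplanes $(H_{\beta_i,l_i})_i$: the second bullet supplies the local compatibility and convexity conditions needed for this to be the wall sequence of an alcove path, while the first bullet guarantees that every hyperplane separating $A_\circ$ from $w_\circ A_\circ - \lm$ occurs exactly once and no other occurs, whence the path is reduced and ends at $w_\circ A_\circ - \lm$; this reconstruction is again exactly the one performed in \cite{LP08}. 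I expect the main obstacle to be bookkeeping rather than conceptual: one must check carefully that the local arguments and the reconstruction in \cite{LP08} make no essential use of the particular target alcove, so that only the elementary count above needs to be redone.
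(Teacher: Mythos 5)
Your proposal is correct and follows essentially the same route as the paper, which gives no argument beyond invoking \cite[Propositions 10.2--10.3]{LP08}: you adapt exactly that machinery, and the one genuinely new ingredient you supply --- the count of $(\lm,\beta^\vee)+1$ separating hyperplanes $H_{\beta,k}$, $-(\lm,\beta^\vee)\leq k\leq 0$, between $A_\circ$ and $w_\circ A_\circ-\lm$ --- is computed correctly and is consistent with equation \eqref{Level function for extended alcove path} and Remark \ref{Remark on betaik 2}.
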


\begin{rem}\normalfont\label{Remark on betaik 2}
In the second condition, we have $k = ((\lm,\alpha^\vee)+1) + ((\lm,\beta^\vee)+1) + ((\lm,\gamma^\vee)+1) = 2(\lm,\gamma^\vee)+3$. In particular, $\beta_{i_k} \in \{ \alpha,\beta \}$.
\end{rem}

\begin{defi}\normalfont
An admissible subset associated to $\Pi = (A_0,\ldots,A_u) \in \widetilde{\AP}(\lm)$ is a subset $J = \{ j_1, \ldots, j_N \}$ of $\{ 1,\ldots,u \}$ such that $j_1 < \cdots < j_N$ and that there exists a saturated chain
$$
e \rightarrow s_{\beta_{j_1}} \rightarrow s_{\beta_{j_1}} s_{\beta_{j_2}} \rightarrow \cdots \rightarrow s_{\beta_{j_1}} s_{\beta_{j_2}} \cdots s_{\beta_{j_N}} = w_\circ
$$
in the Bruhat graph of $W$. Let $\clA(\Pi)$ denote the set of admissible subsets associated to $\Pi$. For $J = \{ j_1, \ldots, j_N \} \in \clA(\Pi)$ and $1 \leq k < l \leq N$, we define $\ol{w}_{k,l}(J)$, $w_{k,l}(J)$, $\ol{w}(J)$, and $w(J)$ by the same way as in the ordinary alcove paths model.
\end{defi}

\begin{rem}\normalfont\label{Remark on w(J)}
Let $\Pi \in \widetilde{\AP}(\lm)$, $J \in \clA(\Pi)$. Opposed to the ordinary alcove path model, the size of $J$ and the Weyl group element $\ol{w}(J)$ is independent of $J$; we have $|J| = N$ and $\ol{w}(J) = w_\circ$. However, $w(J)$ depends on $J$; there exists $\nu = \nu(J) \in \Lm$ such that $w(J) = t_\nu w_\circ$.
\end{rem}

\begin{ex}\label{Example 415}\normalfont
Suppose that our root system is of type $A_2$, and $\lm = (2,1,0)$. Let $\Pi \in \widetilde{\AP}(\lm)$ be such that
$$
\Gamma(\Pi) = (\alpha_2,\alpha_1+\alpha_2,\alpha_2,\alpha_1+\alpha_2,\alpha_1,\alpha_1+\alpha_2,\alpha_1).
$$
The admissible subsets are the following:
$$
\clA(\Pi) = \{ \{1,2,5\}, \{1,2,7\}, \{1,4,5\}, \{1,4,7\}, \{1,6,7\}, \{3,4,5\}, \{3,4,7\}, \{3,6,7\} \}.
$$
\end{ex}

\begin{lem}\label{extendable of alcove path}
Let $\Pi = (A_0,A_1,\ldots,A_s) \in \AP(\lm)$ with $\Gamma(\Pi) = (\beta_1,\ldots,\beta_s)$. Let $\bfi = (i_1,\ldots,i_N)$ be a reduced word, and consider the corresponding reflection order $\leq_{\bfi}$. Let us write $\Phi^+  = \{ \gamma_1,\ldots,\gamma_N \}$ in a way such that $\gamma_1 <_\bfi \cdots <_\bfi \gamma_N$. Set
$$
\Gamma := (\beta_1,\ldots,\beta_s,\beta_{s+1},\ldots,\beta_{s+N}), \qu \beta_{s+i} := \gamma_i \For 1 \leq i \leq N.
$$
Then, there exists $\widetilde{\Pi} \in \widetilde{\AP}(\lm)$ such that
$$
\widetilde{\Pi} = (A_0,A_1,\ldots,A_s,A_{s+1},\ldots,A_{s+N})
$$
for some alcoves $A_{s+1},\ldots,A_{s+N}$, and that $\Gamma(\widetilde{\Pi}) = \Gamma$.
\end{lem}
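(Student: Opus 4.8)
The plan is to verify that the sequence $\Gamma$ satisfies the two conditions of Proposition~\ref{Characterization of extended chain}, and then to arrange that the path it produces actually prolongs $\Pi$. The first condition is immediate: for $\beta\in\Phi^+$, Proposition~\ref{Characterization of chain} applied to $\Pi\in\AP(\lm)$ shows that $\beta$ occurs $(\lm,\beta^\vee)$ times among $\beta_1,\ldots,\beta_s$, and it occurs exactly once more among $\beta_{s+1},\ldots,\beta_{s+N}=\gamma_1,\ldots,\gamma_N$ since $\{\gamma_1,\ldots,\gamma_N\}=\Phi^+$; hence $\sharp\{i\mid\beta_i=\beta\}=(\lm,\beta^\vee)+1$ in $\Gamma$.

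The second condition is the substantive point. Fix $\alpha,\beta,\gamma\in\Phi^+$ with $\gamma^\vee=\alpha^\vee+\beta^\vee$ and, say, $\alpha<_\bfi\beta$; then $\alpha<_\bfi\gamma<_\bfi\beta$ by Remark~\ref{Remark on reflection order on coroots}, so the subsequence of $(\gamma_1,\ldots,\gamma_N)$ picking out the entries in $\{\alpha,\beta,\gamma\}$ is precisely $(\alpha,\gamma,\beta)$. By Proposition~\ref{Characterization of chain} and Remark~\ref{Remark on betaik 1}, the corresponding subsequence $(\beta_{i_1},\ldots,\beta_{i_k})$ of $(\beta_1,\ldots,\beta_s)$ has even length $k=2(\lm,\gamma^\vee)$, with $\beta_{i_m}\in\{\alpha,\beta\}$ for $m$ odd and $\beta_{i_m}=\gamma$ for $m$ even. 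Concatenating, the subsequence of $\Gamma$ picking out $\{\alpha,\beta,\gamma\}$ is $(\beta_{i_1},\ldots,\beta_{i_k},\alpha,\gamma,\beta)$; as $k$ is even, positions $k+1,k+2,k+3$ have parities odd, even, odd and hold $\alpha\in\{\alpha,\beta\}$, $\gamma$, $\beta\in\{\alpha,\beta\}$, so the alternating condition holds throughout (the degenerate case $k=0$ being included). Thus Proposition~\ref{Characterization of extended chain} produces some $\widetilde{\Pi}_0=(B_0,\ldots,B_{s+N})\in\widetilde{\AP}(\lm)$ with $B_0=A_\circ$ and $\Gamma(\widetilde{\Pi}_0)=\Gamma$.

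It remains to see that $\widetilde{\Pi}_0$ extends $\Pi$. An initial segment of a reduced alcove path is again reduced, so $(B_0,\ldots,B_s)$ is a reduced alcove path from $A_\circ$ with root sequence $(\beta_1,\ldots,\beta_s)=\Gamma(\Pi)$; and a reduced alcove path starting at $A_\circ$ is determined by its root sequence, since the level $l_i=-\sharp\{j<i\mid\beta_j=\beta_i\}$ is recovered from the $\beta_i$'s as in \eqref{Level function for extended alcove path} and then $B_i=s_{\beta_i,l_i}(B_{i-1})$ is forced because adjacent alcoves are mirror images across their common hyperplane. Hence $B_i=A_i$ for $i\leq s$, and we may take $\widetilde{\Pi}=\widetilde{\Pi}_0$. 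I expect the parity bookkeeping in the second condition to be the main obstacle, together with the uniqueness statement invoked at the end; alternatively, one can avoid the latter by constructing $\widetilde{\Pi}$ directly as $\Pi$ followed by the $t_{-\lm}$-translate of the reduced alcove path from $A_\circ$ to $w_\circ A_\circ$ attached to $\bfi$, and then invoking Proposition~\ref{Characterization of extended chain} (via the conditions just checked) only to conclude that this length-$(s+N)$ path is reduced.
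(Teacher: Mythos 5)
Your proposal is correct and follows essentially the same route as the paper: verify the two conditions of Proposition \ref{Characterization of extended chain} for $\Gamma$, with the occurrence count handled via Proposition \ref{Characterization of chain} and the interlacing condition handled via Remark \ref{Remark on reflection order on coroots} together with the parity count of Remark \ref{Remark on betaik 1}. The only difference is that you spell out explicitly why the resulting reduced path must agree with $\Pi$ on its first $s$ alcoves (uniqueness of a reduced path from $A_\circ$ with a given root sequence via \eqref{Level function for extended alcove path}), a point the paper leaves implicit; this is a legitimate and welcome extra step, not a different method.
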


\begin{proof}
By Proposition \ref{Characterization of extended chain}, it suffices to show that $\Gamma$ satisfies the conditions there. First, let $\beta \in \Phi^+$. Since $\{ \beta_{s+1},\ldots,\beta_{s+N} \} = \Phi^+$, we have
$$
\sharp\{ i \in \{ 1,\ldots,s+N \} \mid \beta_i = \beta \} = \sharp\{ i \in \{ 1,\ldots,s \} \mid \beta_i = \beta \} + 1.
$$
By Proposition \ref{Characterization of chain}, we see that
$$
\sharp\{ i \in \{ 1,\ldots,s \} \mid \beta_i = \beta \} = (\lm,\beta^\vee).
$$
Therefore, we obtain the first condition.

Next, let $\alpha,\beta,\gamma \in \Phi^+$ be such that $\gamma^\vee = \alpha^\vee + \beta^\vee$, and consider the subsequence $(\beta_{i_1},\ldots,\beta_{i_k})$ of $(\beta_1,\ldots,\beta_{s+N})$ consisting of $\alpha,\beta,\gamma$. Since $\{ \beta_{s+1},\ldots,\beta_{s+N} \} = \Phi^+$, only the last three terms are of the form $\beta_{s+i}$, $i = 1,\ldots,N$. By remark \ref{Remark on reflection order on coroots}, $(\beta_{i_{k-2}}, \beta_{i_{k-1}}, \beta_{i_k})$ is either $(\alpha,\gamma,\beta)$ or $(\beta,\gamma,\alpha)$. On the other hand, by Proposition \ref{Characterization of chain}, for each $m = 1,\ldots,k-3$, we have $\beta_{i_m} \in \{ \alpha,\beta \}$ if $m$ is odd, and $\beta_{i_m} = \gamma$ if $m$ is even. Also, by remark \ref{Remark on betaik 1}, we see that $k-2$ is even. By above, we conclude that for each $m = 1,\ldots,k$, we have $\beta_{i_m} \in \{ \alpha,\beta \}$ if $m$ is odd, and $\beta_{i_m} = \gamma$ if $m$ is even. This proves the second condition. Thus, the proof completes.
\end{proof}

\begin{lem}\label{added facet contains lm}
Let $\Pi = (A_0,A_1,\ldots,A_s) \in \AP(\lm)$ and $\widetilde{\Pi} = (A_0,A_1,\ldots,A_s,A_{s+1},\ldots,A_{s+N}) \in \widetilde{\AP}(\lm)$ be as before. Let $l_i \in \Z$ be such that the common facet $F_i$ of $A_{i-1}$ and $A_i$ is contained in the hyperplane $H_{\beta_i,l_i}$. Then, we have $l_{s+k} = -(\lm,\beta_{s+k}^\vee)$, and consequently, $F_{s+k}$ contains $-\lm$ for all $k = 1,\ldots,N$.
\end{lem}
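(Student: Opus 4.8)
The plan is to prove the two assertions in turn: first the explicit value $l_{s+k} = -(\lm,\beta_{s+k}^\vee)$ by a counting argument, and then to deduce from it, geometrically, that $-\lm$ lies in the closure of $F_{s+k}$. For the value of $l_{s+k}$, I would start from the level formula \eqref{Level function for extended alcove path} applied to $\widetilde{\Pi}$, which gives $l_{s+k} = -\sharp\{ j < s+k \mid \beta_j = \beta_{s+k} \}$, and split this index set according to whether $j \leq s$ or $s < j < s+k$. Write $\beta_{s+k} = \gamma_k \in \Phi^+$. On the one hand, the root sequence associated to $\Pi \in \AP(\lm)$ is $(\beta_1,\ldots,\beta_s)$, so Proposition \ref{Characterization of chain} shows that the number of indices $j \leq s$ with $\beta_j = \gamma_k$ equals $(\lm,\gamma_k^\vee)$. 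On the other hand, for $s < j < s+k$ we have $\beta_j = \gamma_{j-s}$ with $1 \leq j-s \leq k-1$, and since $\gamma_1,\ldots,\gamma_N$ enumerate $\Phi^+$ (and $|\Phi^+| = N$) they are pairwise distinct, so none of these equals $\gamma_k$. Hence $l_{s+k} = -(\lm,\gamma_k^\vee) = -(\lm,\beta_{s+k}^\vee)$.

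For the ``consequently'' part, note that $l_{s+k} = -(\lm,\beta_{s+k}^\vee) = (-\lm,\beta_{s+k}^\vee)$, so $-\lm$ lies on the hyperplane $H_{\beta_{s+k},l_{s+k}}$ carrying $F_{s+k}$; in particular the affine reflection $s_{F_{s+k}} = s_{\beta_{s+k},l_{s+k}}$ fixes $-\lm$. Since $\Pi$ ends at $A_s = A_\circ - \lm$ and the origin is a vertex of $\ol{A_\circ}$, we have $-\lm \in \ol{A_s}$. Iterating the identity $A_i = s_{F_i}(A_{i-1})$ --- valid because the reflection in the hyperplane of the common facet interchanges two adjacent alcoves --- we obtain $A_{s+k} = s_{F_{s+k}} s_{F_{s+k-1}} \cdots s_{F_{s+1}}(A_s)$, and since each factor fixes $-\lm$ it follows that $-\lm \in \ol{A_{s+k}}$ for every $k = 1,\ldots,N$. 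Finally, for two adjacent alcoves the intersection of their closures is the closure of their common facet (the two alcoves sit on opposite sides of the hyperplane carrying that facet, and the closure of an alcove meets each of its bounding hyperplanes in exactly the closure of the corresponding facet); applying this to $A_{s+k-1}$ and $A_{s+k}$, whose closures both contain $-\lm$, yields $-\lm \in \ol{A_{s+k-1}} \cap \ol{A_{s+k}} = \ol{F_{s+k}}$.

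The counting part is routine bookkeeping, so I expect the last step to be where the most care is needed: upgrading ``$F_{s+k}$ is contained in a hyperplane through $-\lm$'' to ``$\ol{F_{s+k}}$ actually contains $-\lm$''. This is not a formality --- a facet of a simplex may well meet a hyperplane through one of the simplex's vertices without containing that vertex --- and the argument above circumvents it precisely by using that \emph{all} the reflections appearing in the appended tail fix $-\lm$ simultaneously, so that $-\lm$ persists in the closure of every alcove of the tail, hence in every shared facet.
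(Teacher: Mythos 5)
Your proof is correct and takes essentially the same route as the paper: the value $l_{s+k}=-(\lm,\beta_{s+k}^\vee)$ is obtained exactly as there, from the level formula \eqref{Level function for extended alcove path} together with Proposition \ref{Characterization of chain} and the fact that $\gamma_1,\ldots,\gamma_N$ are pairwise distinct. The only difference is that you spell out the ``consequently'' clause (that $-\lm$ lies in the closed facet and not merely on its hyperplane), which the paper treats as immediate; your geometric justification is sound, and indeed the only fact used later is that $s_{F_{s+k}}$ fixes $-\lm$, which already follows from $-\lm\in H_{\beta_{s+k},l_{s+k}}$.
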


\begin{proof}
By equation \eqref{Level function for extended alcove path}, we have
$$
l_{s+k} = -\sharp\{ i \in \{ 1,\ldots,s \} \mid \beta_i = \beta_{s+k} \} = -(\lm,\beta_{s+k}^\vee)
$$
for all $k = 1,\ldots,N$, as desired.
\end{proof}

Let $\Pi = (A_0,A_1,\ldots,A_s) \in \AP(\lm)$ and $\widetilde{\Pi} = (A_0,A_1,\ldots,A_s,A_{s+1},\ldots,A_{s+N}) \in \widetilde{\AP}(\lm)$ be as before. Let $J \in \clA(\Pi)$. By \cite[Theorem 6.4]{BFP99}, there exists a unique saturated chain
$$
\ol{w}(J) \rightarrow \ol{w}(J) s_{\gamma_{i_1}} \rightarrow \cdots \rightarrow \ol{w}(J) s_{\gamma_{i_1}} \cdots s_{\gamma_{i_{N-t}}} = w_\circ
$$
such that $1 \leq i_1 < \cdots < i_{N-t} \leq N$. Thus, we obtain a bijection
$$
\Phi : \clA(\Pi) \rightarrow \clA(\widetilde{\Pi});\ \{ j_1, \ldots, j_t \} \mapsto \{ j_1, \ldots j_t, s + i_1, s + i_2 \ldots, s + i_{N-t} \}.
$$
The inverse map is given by $\tilde{J} \mapsto \tilde{J} \cap \{ 1,\ldots,s \}$.

\begin{prop}
Let $\Pi, \widetilde{\Pi}$ be as above. Then, the bijection $\Phi : \clA(\Pi) \rightarrow \clA(\widetilde{\Pi})$ commutes with $\wt,\Etil_p,\Ftil_p$, $p \in I$; here we understand $\Phi(0) = 0$. Consequently, $\clA(\widetilde{\Pi})$ equipped with the maps $\wt,\Etil_p,\Ftil_p$, $p \in I$ is a crystal isomorphic to $\clB(\lm)$, and $\Phi : \clA(\Pi) \rightarrow \clA(\widetilde{\Pi})$ is an isomorphism of crystals.
\end{prop}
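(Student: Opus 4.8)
The plan is to prove that $\Phi$ intertwines $\wt$ and the operators $\Etil_p,\Ftil_p$, $p\in I$ (computed on $\clA(\widetilde{\Pi})\subseteq G(\widetilde{\Pi};-\lm)$ by the formulas of the previous subsection); the rest of the statement then follows by transporting the crystal structure along $\Phi$, since $\clA(\Pi)\simeq\clB(\lm)$ is already known (recalled above) and $\Phi$ is a bijection. Everything hinges on a description of the folded gallery $\gamma^{\Phi(J)}:=\phi_{\Phi(J)}(\gamma(\widetilde{\Pi};-\lm))$. Writing $J=\{j_1,\dots,j_t\}$ and $\Phi(J)=\{j_1,\dots,j_t\}\cup\{s+i_1,\dots,s+i_{N-t}\}$ with $i_1<\cdots<i_{N-t}$, one has $\gamma^{\Phi(J)}=\phi_{j_1}\cdots\phi_{j_t}\,\phi_{s+i_1}\cdots\phi_{s+i_{N-t}}(\gamma(\widetilde{\Pi};-\lm))$. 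By Lemma~\ref{added facet contains lm} every appended facet contains $-\lm$; since $\phi_{s+i_r}$ only alters positions $\geq s+i_r$, the facet it uses is unchanged by the earlier folds $\phi_{s+i_{r'}}$ with $r'>r$ and still contains $-\lm$, so $\phi_{s+i_r}$ fixes $-\lm$ and carries facets through $-\lm$ to facets through $-\lm$. By downward induction on $r$, after $\phi_{s+i_1}\cdots\phi_{s+i_{N-t}}$ the gallery agrees with $\gamma(\Pi;-\lm)$ on positions $0,\dots,s$, has endpoint $-\lm$, and all of its tail facets (positions $s+1,\dots,s+N$) pass through $-\lm$. Applying then $\phi_{j_1}\cdots\phi_{j_t}$ (all $j_r\leq s$), a straightforward induction shows these folds act on positions $0,\dots,s$ exactly as on $\gamma(\Pi;-\lm)$, while on the tail facets and on the endpoint each $\phi_{j_r}$ acts by the single reflection $s_{F_{j_r}}$. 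Hence $\beta^{\Phi(J)}_i=\beta^{J}_i$ and $l^{\Phi(J)}_i=l^{J}_i$ for $i\leq s$; $\mu^{\Phi(J)}=\mu^{J}$, so $\wt(\Phi(J))=\wt(J)$; and every tail facet of $\gamma^{\Phi(J)}$ passes through $\mu^{J}=-\wt(J)$, whence $l^{\Phi(J)}_i=(-\wt(J),(\beta^{\Phi(J)}_i)^\vee)$ for $s<i\leq s+N$.

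From these facts, $I(\Phi(J),p)\cap\{1,\dots,s\}=I(J,p)$, and each $i\in I(\Phi(J),p)$ with $i>s$ has $l^{\Phi(J)}_i=(-\wt(J),\alpha_p^\vee)=(\mu^{J},\alpha_p^\vee)$. Therefore $L(\Phi(J),p)=L(J,p)$ and $M(\Phi(J),p)=M(J,p)$; in particular $\vep_p$ and $\vphi_p$ are preserved, the conditions ``$\Etil_p(\cdot)=0$'' and ``$\Ftil_p(\cdot)=0$'' transfer between $J$ and $\Phi(J)$, and whenever the minimum $M(J,p)$ is realized by some position in $I(J,p)$ strictly below $(\mu^{J},\alpha_p^\vee)$, the distinguished indices in the formulas for $\Etil_p(\Phi(J))$ and $\Ftil_p(\Phi(J))$ are computed among positions $\leq s$ and coincide with those for $J$; since moreover $\ol{w}(\Etil_p(J))=\ol{w}(J)$ (resp. $\ol{w}(\Ftil_p(J))=\ol{w}(J)$) by Proposition~\ref{Properties of alcove paths model}(\ref{Properties of alcove paths model 8}) (resp. (\ref{Properties of alcove paths model 6})), $\Phi$ appends the same tail indices, and the identities $\Etil_p\circ\Phi=\Phi\circ\Etil_p$, $\Ftil_p\circ\Phi=\Phi\circ\Ftil_p$ hold in all these cases.

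The remaining cases — which I expect to be the technical heart — are the two level-changing ones: $\Ftil_p(J)=J\sqcup\{k'\}$ with $\ol{w}(\Ftil_p(J))=s_p\ol{w}(J)>\ol{w}(J)$, which by Proposition~\ref{Properties of alcove paths model}(\ref{Properties of alcove paths model 7}) forces $M(J,p)=(\mu^{J},\alpha_p^\vee)<0$ with no position in $I(J,p)$ realizing it, and dually $\Etil_p(J)=J\setminus\{k'\}$ with $\ol{w}(\Etil_p(J))=s_p\ol{w}(J)<\ol{w}(J)$ (Proposition~\ref{Properties of alcove paths model}(\ref{Properties of alcove paths model 9})); here $k'=\max I(J,p)\leq s$. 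In these situations the index appearing in the formula for $\Ftil_p(\Phi(J))$ (resp. $\Etil_p(\Phi(J))$) is a tail position $m>s$ of $\gamma^{\Phi(J)}$ whose folded root is $\alpha_p$, and one must show: (a) such a tail position exists, and the smallest one, $m$, is the position at which the unique (with respect to the reflection order underlying the appended segment) increasing Bruhat chain from $\ol{w}(J)$ to $w_\circ$ performs the cover $u\mapsto s_p u$; (b) deleting $m$ from (resp. inserting $m$ into) $\Phi(J)$ replaces the increasing chain from $\ol{w}(J)$ by the increasing chain from $s_p\ol{w}(J)$. Granting (a)--(b), one checks $\Ftil_p(\Phi(J))=(\Phi(J)\setminus\{m\})\cup\{k'\}$, whose intersection with $\{1,\dots,s\}$ is $J\sqcup\{k'\}=\Ftil_p(J)$, so $\Ftil_p(\Phi(J))=\Phi(\Ftil_p(J))$; the $\Etil_p$ case is symmetric. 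Proving (a)--(b) should combine the standard description of the roots of a folded gallery through its underlying Bruhat chain, the fact that a simple reflection changes the length of a Weyl group element by exactly $1$, and the uniqueness of increasing Bruhat chains relative to a reflection order (\cite[Theorem~6.4]{BFP99}), so as to control how the increasing chains from $\ol{w}(J)$ and from $s_p\ol{w}(J)$ to $w_\circ$ differ by a single step.

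Once $\wt\circ\Phi=\wt$ and the commutation of $\Phi$ with all $\Etil_p,\Ftil_p$ is established, closure of $\clA(\widetilde{\Pi})$ under these operators is automatic (since $\Etil_p(\Phi(J))=\Phi(\Etil_p(J))\in\Phi(\clA(\Pi))\sqcup\{0\}=\clA(\widetilde{\Pi})\sqcup\{0\}$), and since $\Phi$ is a bijection from the crystal $\clA(\Pi)\simeq\clB(\lm)$ preserving weights, the operators, and hence $\vep_p,\vphi_p$, the set $\clA(\widetilde{\Pi})$ equipped with $\wt,\Etil_p,\Ftil_p$ is a crystal isomorphic to $\clB(\lm)$ and $\Phi$ is an isomorphism of crystals, as claimed.
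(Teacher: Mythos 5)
Your overall strategy coincides with the paper's: use Lemma \ref{added facet contains lm} to see that the appended folds fix $-\lm$, hence $\mu^{\Phi(J)}=\mu^J$ and $\wt\circ\Phi=\wt$; deduce $L(\Phi(J),p)=L(J,p)$ and $M(\Phi(J),p)=M(J,p)$ because every tail position in $I(\Phi(J),p)$ sits at level $(\mu^J,\alpha_p^\vee)$; and observe that in the cases where the minimum is attained on $I(J,p)$ (or where the operator vanishes) the distinguished indices $m_F,k_F$ (resp.\ $k_E,m_E$) are the same for $J$ and $\Phi(J)$, and $\ol{w}$ is unchanged by Proposition \ref{Properties of alcove paths model} \eqref{Properties of alcove paths model 6}, \eqref{Properties of alcove paths model 8}, so $\Phi$ appends the same tail. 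Up to that point your argument is correct and is essentially the paper's.

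The problem is the case you yourself single out as the technical heart: $\Ftil_p(J)=J\sqcup\{k'\}$ with $\ol{w}(\Ftil_p(J))=s_p\ol{w}(J)>\ol{w}(J)$ (and its dual for $\Etil_p$). There your claims (a) and (b) are precisely what must be proved, and you do not prove them; ``should combine the standard description \dots so as to control how the increasing chains differ by a single step'' is a plan, not an argument, and the ingredient you name (``a simple reflection changes the length by exactly $1$'') is too weak to carry it. The tool the paper uses, twice, is the lifting property of the Bruhat order, \cite[Corollary 2.2.8(i)]{BB05}: first, its contrapositive, applied at the minimal index $k$ along the appended increasing chain where left multiplication by $s_p$ decreases length (such $k$ exists because $s_p\ol{w}(J)>\ol{w}(J)$ while $s_pw_\circ<w_\circ$), shows that the reflection performed at that step is $s_{\alpha_p}$, i.e.\ the tail genuinely contains a position of $I(\Phi(J),p)$ — your (a); second, an induction with the same lifting property shows that deleting the relevant tail position and multiplying the initial segment of the chain by $s_p$ on the left still produces a saturated chain, which, concatenated with the chain from $e$ to $s_p\ol{w}(J)$ through $\Ftil_p(J)$ and the remaining portion of the old chain and compared with the unique increasing chain of \cite[Theorem 6.4]{BFP99}, gives your (b). Without these two steps the commutation $\Ftil_p\circ\Phi=\Phi\circ\Ftil_p$ (and likewise for $\Etil_p$) in the level-changing case is not established, and with it neither is the closedness of $\clA(\widetilde{\Pi})$ under the operators nor the isomorphism; so, as written, the proposal has a genuine gap exactly where the difficulty lies.
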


\begin{proof}
Let $J = \{ j_1, \ldots, j_t \} \in \clA(\Pi)$, $p \in I$. We use the notation above. In particular, we write $\Phi(J) = \{ j_1, \ldots, j_t, s+i_1, s+i_2, \ldots, s+i_{N-t} \}$. In terms of galleries, let us write
$$
J = (A^J_0,F^J_1,A^J_1,\ldots,F^J_s,A^J_s,\lm^J).
$$
Then, $\Phi(J)$ is of the form
$$
(A^J_0,F^J_1,A^J_1,\ldots,F^J_s,A^J_s,F^J_{s+1},A^J_{s+1},\ldots,F^J_{s+N},A^J_{s+N},\mu^J).
$$
For each $i \in \{ 1,\ldots,s+N \}$, let $\beta^J_i \in \Phi^+$ and $l^J_i \in \Z$ be such that $F_i^J \subset H_{\beta^J_i,l^J_i}$.

First, we compute $\wt(\Phi(J))$. By definition, we have
$$
\wt(\Phi(J)) = -w(\Phi(J))(-\lm) = -w(J) s_{F_{s+i_1}} \cdots s_{F_{s+i_{N-t}}}(-\lm).
$$
Recall from Lemma \ref{added facet contains lm} that the facets $F_{s+i_1},\ldots,F_{s+i_{N-t}}$ contain $-\lm$. Therefore, the corresponding affine reflections stabilize $-\lm$. Hence, we obtain
$$
\wt(\Phi(J)) = -w(J)(-\lm) = \wt(J),
$$
as desired.

Next, we compute $\Ftil_p(\Phi(J))$. Obviously, $I(\Phi(J),p) = I(J,p) \sqcup I'$ for some subset $I'$ of $\{ s+1,\ldots,s+N \}$. Since the facets $F_{s+k}$, $k = 1,\ldots,N$ contain $-\lm$, the facets $F^J_{s+k}$ contain $\mu^J$. Hence, for each $i' \in I'$, we have
$$
l^J_{i'} = (\mu^J,\alpha_p^\vee) = (\lm^J,\alpha_p^\vee).
$$
Here, we used $\mu^J = -\wt(\Phi(J)) = -\wt(J) = \lm^J$. Therefore, we obtain
$$
L(\Phi(J),p) = L(J,p), \AND M(\Phi(J),p) = M(J,p).
$$

Now, we have three possibilities:
\begin{enumerate}
\item $M(\Phi(J),p) = M(J,p) = 0$. In this case, we have
$$
\Ftil_p(\Phi(J)) = 0, \qu \Ftil_p(J) = 0,
$$
and hence $\Ftil_p(\Phi(J)) = \Phi(\Ftil_p(J))$.
\item $M(\Phi(J),p) = M(J,p) < 0$ and $\{ i \in I(J,p) \mid l^J_i = M(J,p) \} \neq \emptyset$. In this case, we have $\{ i \in I(\Phi(J),p) \mid l^J_i = M(\Phi(J),p) \} \neq \emptyset$, and therefore, $m_F := m_F(\Phi(J),p) = m_F(J,p)$, $k_F := k_F(\Phi(J),p) = k_F(J,p)$. Hence,
$$
\Ftil_p(\Phi(J)) = (\Phi(J) \setminus \{m_F\}) \sqcup \{k_F\} = \Ftil_p(J) \sqcup \{ s+i_1,\ldots,s+i_{N-t} \}.
$$
Since $\ol{w}(\Ftil_p(J)) = \ol{w}(J)$ (by Proposition \ref{Properties of alcove paths model} \eqref{Properties of alcove paths model 6}), it follows that $\Ftil_p(J) \sqcup \{ s+i_1,\ldots,s+i_{N-t} \} = \Phi(\Ftil_p(J))$. This shows $\Ftil_p(\Phi(J)) = \Phi(\Ftil_p(J))$.
\item $M(\Phi(J),p) = M(J,p) < 0$ and $\{ i \in I(J,p) \mid l^J_i = M(J,p) \} = \emptyset$. In this case, we have $M(J,p) = (\lm^J,\alpha_p^\vee)$, and $\Ftil_p(J) = J \sqcup \{ k' \}$, where $k' := \max I(J,p) \in I(J,p) \cap \{ j_t+1,j_t+2,\ldots,s \}$.

Let us show that $I'$ is not empty. Recall that we have a saturated chain
$$
\ol{w}(J) \rightarrow \ol{w}(J)s_{\gamma_{i_1}} \rightarrow \cdots \rightarrow \ol{w}(J) s_{\gamma_{i_1}} \cdots s_{\gamma_{i_{N-t}}} = w_\circ.
$$
By Proposition \ref{Properties of alcove paths model} \eqref{Properties of alcove paths model 7}, we have $s_p \ol{w}(J) > \ol{w}(J)$. Since $s_p w_\circ < w_\circ$, we can take the minimal $k \in \{ 1,\ldots,N-t \}$ such that $s_p \ol{w}(J) s_{\gamma_{i_1}} \cdots s_{\gamma_{i_k}} < \ol{w}(J) s_{\gamma_{i_1}} \cdots s_{\gamma_{i_k}}$. Then, applying (the contraposition of) \cite[Corollary 2.2.8 (i)]{BB05} to $s = s_p$, $t = s_{\ol{w}(J)s_{\gamma_{i_1}} \cdots s_{\gamma_{i_{k-1}}}(\gamma_{i_k})}$, and $w = \ol{w}(J)s_{\gamma_{i_1}} \cdots s_{\gamma_{i_{k-1}}}$, we obtain $\ol{w}(J) s_{\gamma_{i_1}} \cdots s_{\gamma_{i_{k-1}}}(\gamma_{i_k}) = \alpha_p$. This implies that $i_k \in I'$.

Set $m' := \min I'$. Since $l^J_{i'} = (\lm^J,\alpha_p^\vee) = M(\Phi(J),p)$ for all $i' \in I'$, we see that
\begin{align}
\begin{split}
k_F &:= \max(I(\Phi(J),p) \cap \{ 1,\ldots,m'-1 \}) \\
&= \max I(J,p) = k',
\end{split} \nonumber
\end{align}
and hence,
\begin{align}
\begin{split}
\Ftil_p(\Phi(J)) &= (\Phi(J) \setminus \{ m' \}) \sqcup \{ k' \} \\
&= \Ftil_p(J) \sqcup (\{ s+i_1,\ldots,s+i_{N-t} \} \setminus \{ m' \}).
\end{split} \nonumber
\end{align}

Let us write $\Ftil_p(J) = \{ j'_1 < \cdots < j'_t < j'_{t+1} \}$ and $m' = s+i_m$ for some $1 \leq m \leq N-t$. Then,
$$
e \rightarrow s_{\beta_{j'_1}} \rightarrow \cdots \rightarrow s_{\beta_{j'_1}} \cdots s_{\beta_{j'_{t+1}}} = \ol{w}(\Ftil_p(J)) = s_p \ol{w}(J)
$$
is a saturated chain from $e$ to $s_p\ol{w}(J)$. Now, we show that
$$
s_p\ol{w}(J) \rightarrow s_p\ol{w}_J s_{\gamma_{i_1}} \rightarrow \cdots \rightarrow s_p \ol{w}(J)s_{\gamma_{i_1}} \cdots s_{\gamma_{i_{m-1}}} = \ol{w}(J)s_{\gamma_{i_1}} \cdots s_{\gamma_{i_m}}
$$
is a saturated chain from $s_p \ol{w}(J)$ to $\ol{w}(J)s_{\gamma_{i_1}} \cdots s_{\gamma_{i_m}}$. Let $1 \leq k \leq m-1$ and set $w_k := \ol{w}(J)s_{\gamma_{i_1}} \cdots s_{\gamma_{i_{k-1}}}$. Assume that $s_p\ol{w}(J) \rightarrow \cdots \rightarrow s_p \ol{w}(J) s_{\gamma_{i_1}} \cdots s_{\gamma_{i_{k-1}}} = s_p w_k$ is a saturated chain. We know that
$$
w_k \rightarrow w_k s_{\gamma_{i_k}} \AND w_k \rightarrow s_p w_k
$$
are saturated. Then, by \cite[Corollary 2.2.8 (i)]{BB05} again, both
$$
w_k \rightarrow w_k s_{\gamma_{i_k}} \rightarrow s_p w_k s_{\gamma_{i_k}} \AND w_k \rightarrow s_p w_k \rightarrow s_p w_k s_{\gamma_{i_k}}
$$
are saturated. This implies that $s_p\ol{w}(J) \rightarrow \cdots \rightarrow s_p \ol{w}(J) s_{\gamma_{i_1}} \cdots s_{\gamma_{i_{k}}} = s_p w_{k+1}$ is saturated. Then, by induction on $k$, one can prove the claim.

This far, we have obtained saturated chains from $e$ to $s_p \ol{w}(J)$, from $s_p \ol{w}(J)$ to $\ol{w}(J)s_{\gamma_{i_1}} \cdots s_{\gamma_{i_m}}$, and from $\ol{w}(J)s_{\gamma_{i_1}} \cdots s_{\gamma_{i_m}}$ to $\ol{w}(J)s_{\gamma_{i_1}} \cdots s_{\gamma_{i_{N-t}}} = w_\circ$. Concatenating these chains, we obtain a saturated chain from $e$ to $w_\circ$, which implies
$$
\Ftil_p(\Phi(J)) = \Phi(\Ftil_p(J)).
$$
\end{enumerate}

The assertion concerning $\Etil_p$ is proved similarly.
\end{proof}

\begin{cor}\label{Nonempty}
Let $\Pi \in \AP(\lm)$, $J \in \clA(\widetilde{\Pi})$, $p \in I$. If $\Ftil_p(J) \neq 0$ (resp., $\Etil_p(J) \neq 0$), then $\{ i \in I(J,p) \mid l^J_i = M(J,p) \} \neq \emptyset$ (resp., $k_E \neq k'$). Also, we have $\vphi_p(J) = M(J,p)$ and $\vep_p(J) = (-\wt(J),\alpha_p^\vee)-M(J,p)$.
\end{cor}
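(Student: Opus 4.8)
The plan is to derive Corollary \ref{Nonempty} from the preceding Proposition, which realizes $\clA(\widetilde{\Pi})$ as a crystal isomorphic to $\clA(\Pi)$ via $\Phi$, together with Proposition \ref{Properties of alcove paths model} and the fact, recorded in Remark \ref{Remark on w(J)}, that every admissible subset of $\widetilde{\Pi}$ has cardinality exactly $N$. The key point is that this rigidity of the cardinality rules out, inside $\clA(\widetilde{\Pi})$, the ``length-changing'' alternatives in the definitions of $\Ftil_p$ and $\Etil_p$; once those are excluded, everything else is a transport along $\Phi$.

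First I would establish the two nonemptiness statements. Suppose $\Ftil_p(J) \neq 0$. By the preceding Proposition, $\clA(\widetilde{\Pi})$ is closed under $\Ftil_p$, so $\Ftil_p(J) \in \clA(\widetilde{\Pi})$ and hence $|\Ftil_p(J)| = N = |J|$ by Remark \ref{Remark on w(J)}. The definition of $\Ftil_p$ splits $J$ into three mutually exclusive cases: the first yields $\Ftil_p(J) = 0$, which is excluded; the third yields $\Ftil_p(J) = J \sqcup \{ k' \}$, of cardinality $|J| + 1 = N+1$, which is excluded by the cardinality count; therefore $J$ falls under the second case, and this case is by definition the condition $\{ i \in I(J,p) \mid l^J_i = M(J,p) \} \neq \emptyset$. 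The argument for $\Etil_p$ is identical: if $\Etil_p(J) \neq 0$ then $\Etil_p(J) \in \clA(\widetilde{\Pi})$ has cardinality $N = |J|$, so $J$ cannot fall under the first case in the definition of $\Etil_p$ (which yields $0$) nor under the third (which yields $J \setminus \{ k' \}$, of cardinality $N-1$); hence it falls under the second case, i.e.\ $k_E \neq k'$.

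For the two formulas I would transport the corresponding statements for the ordinary model along $\Phi$. Write $J' := J \cap \{ 1,\ldots,s \} \in \clA(\Pi)$, so that $J = \Phi(J')$. In the proof of the preceding Proposition it was shown that $\wt(\Phi(J')) = \wt(J')$ and $M(\Phi(J'),p) = M(J',p)$, while $\vphi_p$ and $\vep_p$ are preserved by $\Phi$ because $\Phi$ is an isomorphism of crystals. Applying Proposition \ref{Properties of alcove paths model}\eqref{Properties of alcove paths model 4} and \eqref{Properties of alcove paths model 5} to $J'$ then gives
$$
\vphi_p(J) = \vphi_p(J') = -M(J',p) = -M(J,p)
$$
and
$$
\vep_p(J) = \vep_p(J') = (-\wt(J'),\alpha_p^\vee) - M(J',p) = (-\wt(J),\alpha_p^\vee) - M(J,p).
$$

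I do not expect a genuine obstacle here: the substantive content is already contained in the preceding Proposition, and the only subtlety is the observation that the length-increasing branch of $\Ftil_p$ and the length-decreasing branch of $\Etil_p$ are never used on $\clA(\widetilde{\Pi})$. The cleanest justification is the cardinality argument above; alternatively, one can reread the case analysis in the proof of the preceding Proposition, where it was shown that the third case for an element of $\clA(\Pi)$ is always sent by $\Phi$ into the second case for the corresponding element of $\clA(\widetilde{\Pi})$.
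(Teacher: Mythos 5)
Your proposal is essentially correct, and for the second half (the formulas for $\vphi_p$ and $\vep_p$) it is exactly the intended argument: the paper states the corollary without proof, the content being the identities $M(\Phi(J'),p)=M(J',p)$ and $\wt(\Phi(J'))=\wt(J')$ established inside the proof of the preceding proposition, combined with Proposition \ref{Properties of alcove paths model} \eqref{Properties of alcove paths model 4}--\eqref{Properties of alcove paths model 5} applied to $J'=J\cap\{1,\ldots,s\}$ and the fact that the isomorphism $\Phi$ preserves $\vphi_p,\vep_p$. (Note that what you actually prove is $\vphi_p(J)=-M(J,p)$; this silently corrects an evident sign slip in the statement, since $\vphi_p(J)\geq 0$ while $M(J,p)\leq 0$.) For the nonemptiness statements your primary route differs from the paper's: the paper's implicit argument is the one you offer as an alternative, namely rereading the case analysis in the proof of the proposition (in its case (3) the set $I'$ is shown to be nonempty and to consist of indices realizing $M(\Phi(J),p)$, so the image always lands in the second branch; similarly for $\Etil_p$), whereas you argue from the rigidity $|J|=N$ for every $J\in\clA(\widetilde{\Pi})$ (Remark \ref{Remark on w(J)}) together with closedness of $\clA(\widetilde{\Pi})$ under the operators. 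Your route is a nice observation, as it uses only the statement of the proposition rather than its proof.

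The one step in the cardinality route that needs justification is the count in the third branches: to say that $J\sqcup\{k'\}$ has cardinality $N+1$ you need $k'\notin J$, and to say that $J\setminus\{k'\}$ has cardinality $N-1$ you need $k'\in J$. These membership facts are precisely the kind of statements the paper records as nontrivial properties rather than reading them off the definition (Proposition \ref{Properties of alcove paths model} \eqref{Properties of alcove paths model 6}--\eqref{Properties of alcove paths model 9}), and they are stated there only for the ordinary model; for $J\in\clA(\widetilde{\Pi})$ they are not yet available at this point. The patch is immediate, though: if the membership fails, the third branch returns the set $J$ itself, so $\Ftil_p(J)=J\neq 0$ (resp.\ $\Etil_p(J)=J$), contradicting $\wt(\Ftil_p(J))=\wt(J)-\alpha_p$ (resp.\ $\wt(\Etil_p(J))=\wt(J)+\alpha_p$), which holds on $\clA(\widetilde{\Pi})$ by the crystal structure furnished by the preceding proposition. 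With that one sentence added (or by simply invoking your fallback, which is the paper's own argument), the proof is complete.
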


Let $\Pi = (A_0,\ldots,A_u) \in \widetilde{\AP}(\lm)$. We aim to show that $\clA(\Pi)$ admits a crystal structure isomorphic to $\clB(\lm)$ by relating $\clA(\Pi)$ with $\clA(\Pi'')$ for a certain alcove path $\Pi'' \in \AP(\lm+\rho)$, where $\rho \in \Lm^+$ denotes half the sum of positive roots. To do so, we need the following.

\begin{lem}
There exists an alcove path $\Pi' = (A'_0,A'_1,\ldots,A'_r) \in \AP(\rho)$ such that $A'_N = w_\circ A_\circ$.
\end{lem}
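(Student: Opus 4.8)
\emph{Proof proposal.}
The plan is to obtain $\Pi'$ as the concatenation of a reduced alcove path from $A_\circ$ to $w_\circ A_\circ$ with a reduced alcove path from $w_\circ A_\circ$ to $A_\circ - \rho$, and then to check that this concatenation is again reduced; its prefix of length $N = \ell(w_\circ)$ ends at $w_\circ A_\circ$, which is exactly what is required. Since $W_\aff$ acts transitively on alcoves, write $A_\circ - \rho = v_\rho A_\circ$ with $v_\rho \in W_\aff$. By Lemma~\ref{reduced alcove path and reduced expression}, producing such a $\Pi'$ is equivalent to producing a reduced expression of $v_\rho$ that begins with a reduced expression of $w_\circ$, i.e.\ to proving the length identity $\ell(v_\rho) = \ell(w_\circ) + \ell(w_\circ^{-1}v_\rho)$. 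Via the standard dictionary between the weak order on $W_\aff$ and the sets of hyperplanes separating pairs of alcoves, this identity is equivalent to the inclusion: every hyperplane $H_{\alpha,k}$ ($\alpha \in \Phi^+$, $k \in \Z$) separating $A_\circ$ from $w_\circ A_\circ$ also separates $A_\circ$ from $A_\circ - \rho$. Once this holds, the two reduced paths above cross disjoint sets of hyperplanes whose union is precisely the set separating $A_\circ$ from $A_\circ - \rho$, so their concatenation is reduced.

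To verify the inclusion I would first describe $w_\circ A_\circ$ explicitly. Since $w_\circ$ sends every positive root to a negative root, applying $w_\circ$ to the defining inequalities of $A_\circ$ gives $w_\circ A_\circ = \{ v \in E \mid -1 < (v, \alpha^\vee) < 0 \text{ for all } \alpha \in \Phi^+\}$. Comparing with $A_\circ = \{ v \in E \mid 0 < (v,\alpha^\vee) < 1 \text{ for all } \alpha \in \Phi^+\}$, one reads off that the hyperplanes separating $A_\circ$ from $w_\circ A_\circ$ are exactly the $H_{\alpha,0}$, $\alpha \in \Phi^+$ (and there are $|\Phi^+| = N$ of them, consistently with $\ell(w_\circ) = N$). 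It then remains to check that each $H_{\alpha,0}$ also separates $A_\circ$ from $A_\circ - \rho$: for $v \in A_\circ$ we have $(v - \rho, \alpha^\vee) = (v,\alpha^\vee) - (\rho,\alpha^\vee) < 1 - (\rho,\alpha^\vee) \le 0$, because $(\rho,\alpha^\vee) \ge 1$ for every $\alpha \in \Phi^+$ (indeed $(\rho,\alpha_i^\vee) = 1$ for simple $\alpha_i$, and $\alpha^\vee$ is a nonzero sum of simple coroots with nonnegative integer coefficients). Hence $A_\circ - \rho$ lies on the side $(v,\alpha^\vee) < 0$ of $H_{\alpha,0}$, the opposite side from $A_\circ$, so $H_{\alpha,0}$ is separating, and the inclusion is proved.

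With the inclusion in hand the length identity follows, so concatenating a reduced word for $w_\circ$ with a reduced word for $w_\circ^{-1}v_\rho$ yields a reduced word for $v_\rho$; the reduced alcove path $\Pi' = (A'_0, \ldots, A'_r) \in \AP(\rho)$ attached to it by Lemma~\ref{reduced alcove path and reduced expression} satisfies $A'_N = w_\circ A_\circ$, as desired. I expect no serious obstacle here: the only point needing care is the weak-order/separating-hyperplane bookkeeping — in particular, being precise that a reduced word for $v_\rho$ having a reduced prefix spelling $w_\circ$ really does translate, under Lemma~\ref{reduced alcove path and reduced expression}, into a reduced alcove path passing through $w_\circ A_\circ$ at step $N$ — while everything else is a direct manipulation of the defining inequalities of the fundamental alcove.
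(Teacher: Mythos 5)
Your argument is correct, but it is genuinely different from the paper's. The paper constructs $\Pi'$ explicitly as the lexicographic chain of Lenart--Postnikov: it enumerates the pairs $(\alpha,l)$ with $\alpha\in\Phi^+$, $-(\rho,\alpha^\vee)<l\leq 0$ according to the lexicographic order on the vectors $v(\alpha,l)$, whose first coordinate is $-l$ divided by the height of $\alpha^\vee$; since that coordinate vanishes exactly when $l=0$, the first $N$ walls crossed are precisely the hyperplanes $H_{\alpha,0}$, so $A'_N=wA_\circ$ for some $w\in W$ with $\ell(w)=N$, forcing $w=w_\circ$. You instead prove the length additivity $\ell(v_\rho)=\ell(w_\circ)+\ell(w_\circ^{-1}v_\rho)$, where $A_\circ-\rho=v_\rho A_\circ$, by the separating-hyperplane criterion: the hyperplanes separating $A_\circ$ from $w_\circ A_\circ$ are exactly the $H_{\alpha,0}$ (via $w_\circ A_\circ=\{v\mid -1<(v,\alpha^\vee)<0 \ \forall\alpha\in\Phi^+\}$), and each of these also separates $A_\circ$ from $A_\circ-\rho$ because $(\rho,\alpha^\vee)\geq 1$; concatenating reduced words then yields a reduced path through $w_\circ A_\circ$ at step $N$. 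Both proofs are valid; the paper's buys an explicit chain (which is in the spirit of its later construction of $\Gamma(\lm)$) at the cost of invoking the construction of \cite[Section 4]{LP08}, while yours is self-contained modulo standard Coxeter-theoretic facts (length equals the number of separating hyperplanes, and that the correspondence of Lemma \ref{reduced alcove path and reduced expression} sends prefixes of a reduced word to initial segments of the path --- the one bookkeeping point you rightly flag, which is immediate from $A_i=s_{j_1}\cdots s_{j_i}A_\circ$), and it proves slightly more: every reduced alcove path from $A_\circ$ to $w_\circ A_\circ$ extends to an element of $\AP(\rho)$, not just one particular path.
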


\begin{proof}
Let $R := \{ (\alpha,l) \mid \alpha \in \Phi^+,\ -(\rho,\alpha^\vee) < l \leq 0 \} \in \Phi^+ \times \Z$. Fix a total order on $I$, and identify $I$ with $\{ 1,\ldots,|I| \}$. Consider the map $v : R \rightarrow \Q^{|I|+1}$ defined by
$$
v(\alpha,l) := \frac{1}{\sum_{i \in I} c_i}(-l,c_1,\ldots,c_{|I|}),
$$
where $c_i \in \Z$ is such that $\alpha^\vee = \sum_{i \in I} c_i \alpha_i^\vee$. By the argument in \cite[Section 4]{LP08}, this map is injective, and there exists a reduced alcove path $\Pi' = (A'_0,A'_1,\ldots,A'_r) \in \AP(\rho)$ such that if we define $(\beta_i,l_i) \in \Phi^+ \times \Z$ by the condition that the common facet of $A'_{i-1}$ and $A'_i$ lies in the hyperplane $H_{\beta_i,l_i}$, then $\{ (\beta_i,l_i) \mid 1 \leq i \leq r \} = R$, and $j < k$ if and only if $v(\beta_j,l_j) <_{\lex} v(\beta_k,l_k)$, where $\leq_{\lex}$ denotes the lexicographic order on $\Q^{|I|+1}$.

Note that $\{ (\beta_i,l_i) \mid 1 \leq i \leq N \} = \{ (\alpha,0) \mid \alpha \in \Phi^+ \}$. This implies that $w := s_{\beta_1} \cdots s_{\beta_N} \in W$ and $\ell(w) = N = \ell(w_\circ)$. Therefore, it follows that $A'_N = w_\circ A_\circ$, as desired.
\end{proof}

Now, we define an alcove path $\Pi''$ by
$$
\Pi'' := (A_0,\ldots,A_u, A'_{N+1} - \lm, A'_{N+2} - \lm, \ldots, A'_{r} - \lm).
$$
Note that we have $A_u = w_\circ A-\lm = A'_N -\lm$.

\begin{lem}
$\Pi''$ is a reduced alcove path from $A_\circ$ to $A_\circ - \lm - \rho$.
\end{lem}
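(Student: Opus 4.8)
The plan is to establish two facts and combine them: (1) $\Pi''$ is an alcove path from $A_\circ$ to $A_\circ-\lm-\rho$; (2) the length of $\Pi''$ equals $\sum_{\beta\in\Phi^+}(\lm+\rho,\beta^\vee)$, which is the length common to all reduced alcove paths from $A_\circ$ to $A_\circ-\lm-\rho$. Since ``reduced'' just means ``of minimal length'', (1) and (2) give the statement. I expect the shift by $\rho$ together with the extra summand ``$+1$'' in Proposition \ref{Characterization of extended chain} (compared with Proposition \ref{Characterization of chain}) to make the length count come out on the nose.

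For (1), the point I would first record is that the translation $t_{-\lm}$ sends $H_{\alpha,k}$ to $H_{\alpha,k-(\lm,\alpha^\vee)}$, with $(\lm,\alpha^\vee)\in\Z$ because $\lm\in\Lm$. Hence $t_{-\lm}$ permutes the affine hyperplanes $\{H_{\alpha,k}\}$, so it maps alcoves to alcoves and adjacent pairs to adjacent pairs, taking common facets to common facets. Applying this to the tail $(A'_N,A'_{N+1},\dots,A'_r)$ of $\Pi'$, the sequence $(A'_N-\lm,A'_{N+1}-\lm,\dots,A'_r-\lm)$ is an alcove path from $A'_N-\lm=w_\circ A_\circ-\lm=A_u$ to $A'_r-\lm=(A_\circ-\rho)-\lm=A_\circ-\lm-\rho$. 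Concatenating it with $\Pi=(A_0,\dots,A_u)$ produces exactly $\Pi''$, which is therefore an alcove path from $A_\circ$ to $A_\circ-\lm-\rho$ with $(u+1)+(r-N)$ alcoves, hence of length $u+(r-N)$.

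For (2), I would compute $u+(r-N)$ from the characterizations. As $\Pi\in\widetilde{\AP}(\lm)$, Proposition \ref{Characterization of extended chain} says every $\beta\in\Phi^+$ occurs $(\lm,\beta^\vee)+1$ times in $\Gamma(\Pi)$, so $u=\sum_{\beta\in\Phi^+}((\lm,\beta^\vee)+1)$; as $\Pi'\in\AP(\rho)$, Proposition \ref{Characterization of chain} gives $r=\sum_{\beta\in\Phi^+}(\rho,\beta^\vee)$; and $N=\ell(w_\circ)=|\Phi^+|=\sum_{\beta\in\Phi^+}1$ by definition. Therefore $u+(r-N)=\sum_{\beta\in\Phi^+}(\lm+\rho,\beta^\vee)$. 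On the other hand $\AP(\lm+\rho)\neq\emptyset$ (the adjacency graph of alcoves is connected, so some alcove path from $A_\circ$ to $A_\circ-\lm-\rho$ exists, and a shortest one is reduced), and Proposition \ref{Characterization of chain} applied with $\lm+\rho$ in place of $\lm$ shows that every member of $\AP(\lm+\rho)$ has length $\sum_{\beta\in\Phi^+}(\lm+\rho,\beta^\vee)$. So the minimal length of an alcove path from $A_\circ$ to $A_\circ-\lm-\rho$ is exactly the length of $\Pi''$, whence $\Pi''$ is reduced and $\Pi''\in\AP(\lm+\rho)$.

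I do not expect a real obstacle here; the proof is just a length count, and the only care needed is (i) the translation-invariance of the affine hyperplane arrangement in step (1), and (ii) matching each piece of $\Pi''$ with the correct characterization so that the ``$-N$'' from dropping the first $N$ steps of $\Pi'$ cancels the ``$\sum_{\beta}1$'' coming from the ``$+1$'' in Proposition \ref{Characterization of extended chain}. A more hands-on alternative, if desired, is to track crossed hyperplanes: for each $\beta\in\Phi^+$, $\Pi$ crosses exactly the $H_{\beta,k}$ with $0\geq k\geq-(\lm,\beta^\vee)$ and $(A'_N-\lm,\dots,A'_r-\lm)$ crosses exactly those with $-(\lm,\beta^\vee)-1\geq k\geq-(\lm,\beta^\vee)-(\rho,\beta^\vee)+1$; the two ranges are disjoint and their union is $\{0,-1,\dots,-(\lm+\rho,\beta^\vee)+1\}$, precisely the set of $\beta$-hyperplanes separating $A_\circ$ from $A_\circ-\lm-\rho$, so $\Pi''$ crosses each separating hyperplane once and no others, which again gives reducedness.
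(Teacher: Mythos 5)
Your proposal is correct, but it follows a genuinely different route from the paper's. The paper proves the lemma by checking that the root sequence of $\Pi''$ satisfies both conditions of Proposition \ref{Characterization of chain} for the dominant weight $\lm+\rho$: the multiplicity count is carried out explicitly (splitting $\Pi'$ at $A'_N=w_\circ A_\circ$ and using Proposition \ref{Characterization of extended chain} for the initial piece $\Pi$), while the interlacing condition for triples $\alpha,\beta,\gamma$ with $\gamma^\vee=\alpha^\vee+\beta^\vee$ is only sketched, by analogy with Lemma \ref{extendable of alcove path}. You avoid the interlacing condition altogether: you use only the ``only if'' direction of the characterizations (the multiplicity counts for $\Pi$, for $\Pi'$, and for reduced members of $\AP(\lm+\rho)$), and you supply instead a direct geometric step --- translation by $-\lm$ preserves the affine hyperplane arrangement because $(\lm,\alpha^\vee)\in\Z$, so the shifted tail of $\Pi'$ is an alcove path gluing onto $\Pi$ at $A_u=A'_N-\lm$ --- followed by a length count showing $\Pi''$ attains the minimal possible length $\sum_{\beta\in\Phi^+}(\lm+\rho,\beta^\vee)$, hence is reduced; your separating-hyperplane variant at the end is an equally valid packaging of the same count. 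Your route is more elementary, bypasses the second-condition verification the paper omits, and makes explicit a point the paper leaves implicit, namely that $\Pi''$ is indeed an alcove path ending at $A_\circ-\lm-\rho$; the paper's route, in exchange, yields the full characterization of $\Gamma(\Pi'')$, which is the form in which alcove paths are manipulated elsewhere in the paper. (One small simplification available to you: nonemptiness of the set of alcove paths from $A_\circ$ to $A_\circ-\lm-\rho$ is witnessed by $\Pi''$ itself, so the appeal to connectivity of the alcove adjacency graph is not needed.)
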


\begin{proof}
Let us write $\Pi'' = (A''_0,\ldots,A''_{u+r-N})$, and let $(\beta_i,l_i) \in \Phi^+ \times \Z$ be such that the common facet of $A''_{i-1}$ and $A''_i$ lies in the hyperplane $H_{\beta_i,l_i}$. By Proposition \ref{Characterization of chain}, it suffices to show that the sequence $(\beta_1,\ldots,\beta_{u+r-N})$ satisfies the conditions there.

Recall $\Pi' = (A'_0,\ldots,A'_r) \in \AP(\rho)$. Let $(\beta'_i,l'_i) \in \Phi^+ \times \Z$ be such that the common facet of $A'_{i-1}$ and $A'_i$ lies in the hyperplane $H_{\beta'_i,l'_i}$. Note also that $\beta'_{N+j} = \beta_{u+j}$ for all $j = 1,\ldots,r-N$.

First, let $\beta \in \Phi^+$. Since $(A'_0,\ldots,A'_N) \in \widetilde{\AP}(0)$, by Proposition \ref{Characterization of extended chain}, we have
$$
\sharp\{ i \in \{ 1,\ldots,N \} \mid \beta'_i = \beta \} = 1.
$$
Then, applying Proposition \ref{Characterization of chain} to $\Pi'$, we obtain
\begin{align}
\begin{split}
&\sharp\{ i \in \{ u+1,\ldots,u+r-N \} \mid \beta_i = \beta \} \\
&\qu = \sharp\{ i \in \{ N+1,\ldots,r \} \mid \beta'_i = \beta \} \\
&\qu = \sharp\{ i \in \{ 1,\ldots,r \} \mid \beta'_i = \beta \} - \sharp\{ i \in \{ 1,\ldots,N \} \mid \beta'_i = \beta \} \\
&\qu = (\rho,\beta^\vee) - 1.
\end{split} \nonumber
\end{align}
On the other hand, since $(A''_0,\ldots,A''_{u}) = (A_0,\ldots,A_u) \in \widetilde{\AP}(\lm)$, by Proposition \ref{Characterization of extended chain}, we see that
$$
\sharp\{ i \in \{ 1,\ldots,u \} \mid \beta_i = \beta \} = (\lm,\beta^\vee) + 1.
$$
Combining the results above, we obtain
$$
\sharp\{ i \in \{ 1,\ldots,u+r-N \} \mid \beta_i = \beta \} = (\lm+\rho,\beta^\vee).
$$
This implies the first condition.

The second condition can be verified in a similar way to the proof of Lemma \ref{extendable of alcove path}. Hence, we omit it.
\end{proof}

For an admissible subset $J = \{ j_1, \ldots, j_N \} \in \clA(\Pi)$, the set $\Psi(J) := J$, regarded as a subset of $\{ 1,\ldots,u+r-N \}$, is a member of $\clA(\Pi'')$. Clearly, this gives an injection
$$
\Psi : \clA(\Pi) \rightarrow \clA(\Pi'').
$$

By definition, we have
$$
\wt(\Psi(J)) = -w(\Psi(J))(-\lm-\rho) = -w(J)(-\lm-\rho).
$$
Recall from Remark \ref{Remark on w(J)} that there exists $\nu \in \Lm$ such that $w(J) = t_\nu w_\circ$. Hence,
\begin{align}
\begin{split}
-w(J)(-\lm-\rho) &= -w_\circ(-\lm-\rho) - \nu \\
&= -w_\circ(-\lm) - \nu - w_\circ(-\rho) \\
&= -w(J)(-\lm) - \rho = \wt(J) - \rho.
\end{split} \nonumber
\end{align}
This shows that
\begin{align}\label{Weight of Psi(J)}
\wt(\Psi(J)) = \wt(J) - \rho.
\end{align}

\begin{lem}\label{M(Psi(J),p)}
Let $J \in \clA(\Pi)$, $p \in I$. Then, we have either $M(\Psi(J),p) = M(J,p) + 1 = (-\wt(J),\alpha_p^\vee) + 1$ or $M(\Psi(J),p) = M(J,p) = l^J_i$ for some $i \in \Itil(J,p)$. Moreover, if $M(\Psi(J),p) = M(J,p) + 1$, then $\{ i \in I(J,p) \mid l^J_i = M(J,p) \} = \emptyset$.
\end{lem}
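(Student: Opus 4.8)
The plan is to transport the computation into the ordinary alcove path model via the injection $\Psi : \clA(\Pi) \rightarrow \clA(\Pi'')$ with $\Pi'' \in \AP(\lm+\rho)$, so that Proposition \ref{Properties of alcove paths model} becomes available for $\Psi(J)$. The heart of the argument is to show that the sets $L(J,p)$ and $L(\Psi(J),p)$ differ only in their endpoint term.

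First I would record that, because every fold index $j_k$ of $J$ lies in $\{1,\ldots,u\}$ while $\Pi''$ is obtained from $\Pi$ by appending alcoves, the folded gallery of $\Psi(J)$ agrees with that of $J$ on positions $1,\ldots,u$; in particular $\beta^{\Psi(J)}_i = \beta^J_i$ and $l^{\Psi(J)}_i = l^J_i$ for $i \leq u$, while for $i > u$ the facet $F^{\Psi(J)}_i$ equals $w(J)(F^{\Pi''}_i)$. Writing $w(J) = t_\nu w_\circ$ with $\nu \in \Lm$ (Remark \ref{Remark on w(J)}) and arguing as in Lemma \ref{reflection of hyperplanes}, one obtains $w(J)(H_{\beta,k}) = H_{w_\circ(\beta),k+(\nu,w_\circ(\beta)^\vee)}$, and rewriting $w_\circ(\beta)$ as the negative of a positive root gives $\beta^{\Psi(J)}_i = -w_\circ(\beta^{\Pi''}_i)$ for $i > u$. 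Since $-w_\circ$ permutes $\Delta$, the root $-w_\circ(\alpha_p)$ is simple; but the roots occurring among positions $u+1,\ldots,u+r-N$ of $\Pi''$ are the same as those at positions $N+1,\ldots,r$ of $\Pi'$, none of which is simple, because for a simple root $\alpha$ the only pair $(\alpha,l)$ with $-(\rho,\alpha^\vee) < l \leq 0$ is $(\alpha,0)$, which lies among the first $N$ facets of $\Pi'$. Hence no appended facet of $\Psi(J)$ carries the root $\alpha_p$, that is, $I(\Psi(J),p) = I(J,p)$. Combined with $(\mu^{\Psi(J)},\alpha_p^\vee) = (-\wt(J),\alpha_p^\vee)+1$, which follows from \eqref{Weight of Psi(J)} and $(\rho,\alpha_p^\vee) = 1$, this yields
$$
L(\Psi(J),p) = \{l^J_i \mid i \in I(J,p)\} \cup \{(-\wt(J),\alpha_p^\vee)+1\}, \qquad L(J,p) = \{l^J_i \mid i \in I(J,p)\} \cup \{(-\wt(J),\alpha_p^\vee)\}.
$$
Setting $a := \min\{l^J_i \mid i \in I(J,p)\}$ (with $a := +\infty$ if $I(J,p) = \emptyset$) and $b := (-\wt(J),\alpha_p^\vee)$, one then has $M(J,p) = \min(a,b)$ and $M(\Psi(J),p) = \min(a,b+1)$, and I would conclude by a short case split. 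If $a \geq b+1$, then $M(J,p) = b$ while $M(\Psi(J),p) = b+1 = M(J,p)+1 = (-\wt(J),\alpha_p^\vee)+1$, and no $l^J_i$ equals $b = M(J,p)$; this is the first alternative together with its ``moreover'' clause. If $a \leq b$, then $M(\Psi(J),p) = a = M(J,p)$, and to see that this value is attained at an index of $\Itil(J,p)$ I would apply Proposition \ref{Properties of alcove paths model} \eqref{Properties of alcove paths model 3} to $\Psi(J) \in \clA(\Pi'')$: since $\Itil(\Psi(J),p) = I(\Psi(J),p) \cap \Psi(J) = I(J,p) \cap J = \Itil(J,p)$ (as $J \subseteq \{1,\ldots,u\}$), this gives $M(\Psi(J),p) = \min(\{l^J_i \mid i \in \Itil(J,p)\} \cup \{b+1\})$, and because $a < b+1$ it forces $a = l^J_{i_0}$ for some $i_0 \in \Itil(J,p)$.

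The step I expect to be the main obstacle is the geometric one: verifying that no appended facet of the folded gallery of $\Psi(J)$ carries the root $\alpha_p$. This rests on combining the fact that $-w_\circ$ permutes the simple roots with the precise structure of the $\rho$-extension $\Pi'$, whose appended part contains no simple root. Once $I(\Psi(J),p) = I(J,p)$ is established, the remainder is an elementary comparison of two finite sets of integers.
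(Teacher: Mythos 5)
Your proof is correct, and its skeleton coincides with the paper's at the decisive points: the shift $(-\wt(\Psi(J)),\alpha_p^\vee)=(-\wt(J),\alpha_p^\vee)+1$ from \eqref{Weight of Psi(J)}, the agreement of the folded data of $\Psi(J)$ and $J$ on positions $\leq u$, and the appeal to Proposition \ref{Properties of alcove paths model} \eqref{Properties of alcove paths model 3} for $\Psi(J)\in\clA(\Pi'')$ to force the minimum onto a folded index in the second case. Where you diverge is the step you yourself flag as the main obstacle: you prove the stronger fact $I(\Psi(J),p)=I(J,p)$, by noting that beyond position $u$ the folded facets are $w(J)(F_i)=t_\nu w_\circ(F_i)$, that their roots are $-w_\circ(\beta^{\Pi''}_i)$, and that the $\rho$-tail of $\Pi'$ contains no simple roots (since for simple $\alpha$ the only admissible level is $0$, already used up among the first $N$ facets), while $-w_\circ$ permutes $\Delta$. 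This argument is valid, but the paper shows it is dispensable: since one only needs $I(J,p)\subseteq I(\Psi(J),p)$, the case $M(\Psi(J),p)=(-\wt(\Psi(J)),\alpha_p^\vee)$ already forces $l^J_i>(-\wt(J),\alpha_p^\vee)$ for all $i\in I(J,p)$ directly from the definition of $M(\Psi(J),p)$ as a minimum over the larger set, and the other case is handled purely by Proposition \ref{Properties of alcove paths model} \eqref{Properties of alcove paths model 3}, whose indices all lie in $J\subseteq\{1,\ldots,u\}$. So your route buys a cleaner structural statement (the two sets $L(J,p)$ and $L(\Psi(J),p)$ differ only in the endpoint term), at the cost of an extra geometric analysis of the appended facets that the lemma does not actually require; the paper's argument is shorter and independent of the fine structure of the $\rho$-extension.
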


\begin{proof}
By Proposition \ref{Properties of alcove paths model} \eqref{Properties of alcove paths model 3}, we have
$$
M(\Psi(J),p) = \min(\{ l^{\Psi(J)}_i \mid i \in \Itil(\Psi(J),p) \} \cup \{ (-\wt(\Psi(J)),\alpha_p^\vee) \}).
$$
It is clear that $\Itil(\Psi(J),p) = \Itil(J,p)$, and that $l^{\Psi(J)}_i = l^J_i$ for all $i \in I(J,p)$. Also, by equation \eqref{Weight of Psi(J)}, we have
$$
(-\wt(\Psi(J)),\alpha_p^\vee) = (-\wt(J)+\rho,\alpha_p^\vee) = (-\wt(J),\alpha_p^\vee) + 1.
$$
On the other hand,
$$
M(J,p) = \min(\{ l^J_i \mid i \in I(J,p) \} \cup \{ (-\wt(J),\alpha_p^\vee) \}).
$$

Recall that we have $M(\Psi(J),p) \leq (-\wt(\Psi(J)),\alpha_p^\vee)$. Suppose first that
$$
M(\Psi(J),p) = (-\wt(\Psi(J)),\alpha_p^\vee) = (-\wt(J), \alpha^\vee_p) + 1.
$$
Then, we have $l^J_i > (-\wt(J),\alpha_p^\vee)$ for all $i \in I(J,p)$. This implies that
$$
M(J,p) = (-\wt(J),\alpha_p^\vee) = M(\Psi(J),p) - 1,
$$
and that $\{ i \in I(J,p) \mid l^J_i = M(J,p) \} = \emptyset$.

Next, suppose that $M(\Psi(J),p) < (-\wt(\Psi(J)),\alpha_p^\vee)$. Then, by above, there exists $i \in \Itil(J,p)$ such that $l^J_i = M(\Psi(J),p)$. This implies that $M(J,p) = l^J_i$. Thus, the proof completes.
\end{proof}

Now, recall from Subsection \ref{subsection, Yang-Baxter move} the notion of Yang-Baxter moves. Since it is equivalent to braid moves on the reduced expression of an element in $W_{\aff}$, we can consider the Yang-Baxter moves on $\widetilde{\AP}(\lm)$, as well as on $\AP(\lm)$. Such Yang-Baxter moves give rise to bijections among $\clA(\Pi)$'s, $\Pi \in \widetilde{\AP}(\lm)$.

\begin{theo}
Let $\lm \in \Lm^+$, $\Pi \in \AP(\lm)$, $\Pi_2 \in \widetilde{\AP}(\lm)$. Set $\Pi_1 \in \widetilde{\AP}(\lm)$ to be $\widetilde{\Pi}$ constructed in Lemma \ref{extendable of alcove path}. Then each sequence of Yang-Baxter moves transforming $\Pi_1$ into $\Pi_2$ induces a bijection $Y : \clA(\Pi_1) \rightarrow \clA(\Pi_2)$ commuting with $\wt,\Etil_p,\Ftil_p$ for all $p \in I$; here we understand $Y(0) = 0$.
\end{theo}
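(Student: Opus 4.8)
The plan is to deduce the statement from the corresponding, already available, fact in the \emph{ordinary} alcove path model for the shifted weight $\lm+\rho$, using the embeddings $\Psi$ constructed above. Both $\Pi_1$ and $\Pi_2$ terminate at the alcove $w_\circ A_\circ-\lm$, so appending the fixed tail $(A'_{N+1}-\lm,\ldots,A'_r-\lm)$ to each of them yields reduced alcove paths $\Pi_1'',\Pi_2''\in\AP(\lm+\rho)$. A Yang--Baxter move on $\widetilde{\AP}(\lm)$ is a braid move on the reduced word of the element $v\in W_\aff$ with $vA_\circ=w_\circ A_\circ-\lm$, and this word is an initial segment of the reduced word of the element of $W_\aff$ sending $A_\circ$ to $A_\circ-\lm-\rho$; hence the same sequence of braid moves transforms $\Pi_1''$ into $\Pi_2''$, and, by the Yang--Baxter theory recalled in Subsection \ref{subsection, Yang-Baxter move}, it induces an isomorphism of crystals $Y'':\clA(\Pi_1'')\to\clA(\Pi_2'')$. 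I would then define $Y$ by requiring $\Psi_2\circ Y=Y''\circ\Psi_1$.

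For this to make sense, I would first identify $\Psi_i(\clA(\Pi_i))$ with the set of \emph{maximal} admissible subsets of $\Pi_i''$, that is, those $\tilde J$ with $|\tilde J|=N$ (equivalently $\ol w(\tilde J)=w_\circ$). Indeed, for such a $\tilde J$ the last step of the saturated chain to $w_\circ$ has the form $w_\circ s_\beta\to w_\circ$, which forces $\ell(s_\beta)=1$, i.e.\ $\beta$ simple; since the tail contains no simple root (each $\beta\in\Phi^+$ occurs in it $(\rho,\beta^\vee)-1$ times, which is $0$ when $\beta$ is simple), $\tilde J$ lies in the prefix $\{1,\ldots,u\}$ and therefore in $\im\Psi_i$, and the reverse inclusion is clear. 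Since a Yang--Baxter move preserves the cardinality of admissible subsets in the ordinary model, $Y''$ restricts to a bijection $\Psi_1(\clA(\Pi_1))\to\Psi_2(\clA(\Pi_2))$, so $Y:=\Psi_2^{-1}\circ Y''\circ\Psi_1$ is a well-defined bijection.

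Compatibility with $\wt$ is immediate: $\wt\circ\Psi_i=\wt-\rho$ by \eqref{Weight of Psi(J)}, and $Y''$ preserves weights. For $\Ftil_p$, the key point is that $\Psi_i$ \emph{intertwines} $\Ftil_p$ on $\clA(\Pi_i)$ with $\Ftil_p$ on $\clA(\Pi_i'')$; since $I(\Psi_i(J),p)=I(J,p)$ and $l^{\Psi_i(J)}_i=l^J_i$ on $I(J,p)$, Lemma \ref{M(Psi(J),p)} reduces this to checking that the ``third case'' $\Ftil_p(J)=J\sqcup\{k'\}$ never occurs for $J\in\clA(\Pi_i)$: for $\Pi_1=\widetilde\Pi$ this is Corollary \ref{Nonempty}, and for general $\Pi_2$ I would transport the situation back along $Y''$ --- a putative third case for $K\in\clA(\Pi_2)$ forces, via Lemma \ref{M(Psi(J),p)} and the identity $M(\Psi_2(K),p)=(-\wt(\Psi_2(K)),\alpha_p^\vee)$, that the corresponding $J_0\in\clA(\Pi_1)$ with $\Psi_1(J_0)=(Y'')^{-1}\Psi_2(K)$ (a maximal admissible subset) is itself in the forbidden third case, contradicting Corollary \ref{Nonempty}. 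With the intertwining established, $Y''\circ\Psi_1$ commutes with $\Ftil_p$, $\Psi_2$ intertwines $\Ftil_p$ on its image, and hence $Y$ commutes with $\Ftil_p$; in particular $\clA(\Pi_2)$ is closed under $\Ftil_p$. Finally, $\Psi_i$ does \emph{not} intertwine $\Etil_p$ in general (a boundary artefact of the $\rho$-shift), so I would instead argue formally: $Y$ is a weight-preserving bijection commuting with $\Ftil_p$, with $\clA(\Pi_1)\simeq\clB(\lm)$ and $\clA(\Pi_2)$ closed under $\Ftil_p$, so $\vphi_p\circ Y=\vphi_p$, whence $\vep_p\circ Y=\vep_p$ by the axiom $\vphi_p=\vep_p+(\wt,\alpha_p^\vee)$; this gives closedness of $\clA(\Pi_2)$ under $\Etil_p$, and then the axiom $\Ftil_p(b)=b'\iff b=\Etil_p(b')$ upgrades commutation with $\Ftil_p$ to commutation with $\Etil_p$.

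The step I expect to be the main obstacle is the transport argument for $\Ftil_p$, i.e.\ showing --- non-circularly --- that the ``third case'' of the crystal operators is vacuous on $\clA(\Pi_i)$ for \emph{every} $\Pi_i\in\widetilde{\AP}(\lm)$, not merely for $\widetilde\Pi$; this must be extracted by pulling back along $Y''$ and combining Lemma \ref{M(Psi(J),p)} with the bound $M(\cdot,p)\le 0$ coming (through $\Psi_i$) from Proposition \ref{Properties of alcove paths model} \eqref{Properties of alcove paths model 2}, and the order of the implications needs care so as not to presuppose the closedness of $\clA(\Pi_2)$. A secondary point that must be checked is that Yang--Baxter moves in the ordinary model preserve the cardinality of admissible subsets, which underlies the well-definedness of $Y$.
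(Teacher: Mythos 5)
Your overall strategy is the paper's: embed both extended models into the ordinary model for $\lm+\rho$ via $\Psi$, use the Yang--Baxter crystal isomorphism $Y''$ between $\clA(\Pi_1'')$ and $\clA(\Pi_2'')$, and transfer back using Lemma \ref{M(Psi(J),p)}. Your identification of $\im\Psi_i$ with the size-$N$ admissible subsets of $\Pi_i''$ is correct (the last reflection of a saturated chain ending at $w_\circ$ is simple, and the appended tail contains no simple roots), the $\wt$-step via \eqref{Weight of Psi(J)} is fine, and your transport argument for $\Ftil_p$ can be made to work: assuming the third case for $K\in\clA(\Pi_2)$ forces, via Lemma \ref{M(Psi(J),p)}, $M(\Psi_2(K),p)=(-\wt(\Psi_2(K)),\alpha_p^\vee)$; pulling back along $Y''$ and using $M(J_0,p)\le(-\wt(J_0),\alpha_p^\vee)$ together with the two alternatives of Lemma \ref{M(Psi(J),p)} pins $J_0$ into the third case, contradicting Corollary \ref{Nonempty}. (Two small inaccuracies: $I(\Psi_i(J),p)=I(J,p)$ is false in general --- tail positions may fold onto $\alpha_p$; what is true and suffices is $I(\Psi_i(J),p)\cap\{1,\ldots,u\}=I(J,p)$, $\Itil(\Psi_i(J),p)=\Itil(J,p)$, and the extra indices all exceed $u$, so the relevant minima/maxima are unchanged. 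Also, to match the theorem you should note that your $Y:=\Psi_2^{-1}\circ Y''\circ\Psi_1$ coincides with the bijection induced by the Yang--Baxter moves on the extended model, i.e.\ the commutativity of the square with $\Psi$, which the paper also takes as clear since the braid moves only involve positions $\le u$.)

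The genuine gap is your treatment of $\Etil_p$. The ``formal upgrade'' uses the identity $\vphi_p=\vep_p+(\wt,\alpha_p^\vee)$ on $\clA(\Pi_2)$ and the property $\Ftil_p(b)=b'\iff b=\Etil_p(b')$ for the operators on $\clA(\Pi_2)$. But before the theorem is proved, $\Etil_p,\Ftil_p$ on $\clA(\Pi_2)\subset G(\Pi_2;-\lm)$ are merely combinatorially defined maps: nothing yet relates $\vep_p$, $\vphi_p$ and $\wt$ there, nothing makes $\Etil_p$ and $\Ftil_p$ partial inverses, and $\clA(\Pi_2)$ is not known to be closed under either operator. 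These are exactly the statements that the paper deduces \emph{from} this theorem (Corollaries \ref{extended alcove path is B(lm)} and \ref{Properties of extended alcove paths model}), so invoking them here is circular; in particular ``$\vep_p\circ Y=\vep_p$ by the axiom'' and the final upgrade step have no basis yet. Moreover your stated reason for avoiding the direct route is only half-true: $\Psi_i$ fails to intertwine $\Etil_p$ only at the top of a $p$-string (where $\Etil_p(J)=0$ but $\Etil_p(\Psi_i(J))\neq 0$ because of the $\rho$-shift); away from that boundary the same transport as for $\Ftil_p$ goes through, using the $k_E\neq k'$ half of Corollary \ref{Nonempty}, Lemma \ref{M(Psi(J),p)}, and Proposition \ref{Properties of alcove paths model} \eqref{Properties of alcove paths model 8}--\eqref{Properties of alcove paths model 9}, with the zero case handled separately by comparing $M(\cdot,p)$ with $(-\wt(\cdot),\alpha_p^\vee)$. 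That is how the paper completes the proof (``the commutativity of $Y$ and $\Etil_p$ can be proved similarly''), and it is the repair your argument needs.
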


\begin{proof}
Let $\Pi_1''$ and $\Pi_2''$ be reduced alcove paths from $A_\circ$ to $A_{\circ}-\lm-\rho$ constructed from $\Pi_1$ and $\Pi_2$ in the same way as above, respectively. Then, a sequence of Yang-Baxter moves transforming $\Pi_1$ into $\Pi_2$ also makes $\Pi_1''$ into $\Pi''_2$. Hence, the map $Y$ induces an isomorphism $\clA(\Pi_1'') \rightarrow \clA(\Pi_2'')$ of crystals (by Subsection \ref{subsection, Yang-Baxter move}), and the following diagram commutes:
$$
\xymatrix{
\clA(\Pi_1'') \ar[r]^Y & \clA(\Pi_2'') \\
\clA(\Pi_1) \ar[r]_Y \ar[u]^\Psi & \clA(\Pi_2) \ar[u]_\Psi
}
$$

Let $J \in \clA(\Pi_1)$. Then, by equation \eqref{Weight of Psi(J)}, we have
$$
\wt(J) = \wt(\Psi(J)) + \rho = \wt(Y(\Psi(J))) + \rho = \wt(\Psi(Y(J))) + \rho = \wt(Y(J)),
$$
which shows the commutativity of $Y$ and $\wt$.

Now, we prove that $Y$ commutes with $\Ftil_p$ for all $p \in I$. To begin with, suppose that $\Ftil_p(J) = 0$. This implies that $M(J,p) = 0$. By Lemma \ref{M(Psi(J),p)} and the fact that $M(\Psi(J),p) \leq 0$ (Proposition \ref{Properties of alcove paths model} \eqref{Properties of alcove paths model 2}), we must have $M(\Psi(J),p) = 0$. Since $Y : \clA(\Pi''_1) \rightarrow \clA(\Pi''_2)$ is an isomorphism of crystals, we have
$$
0 = M(\Psi(J),p) = -\vphi_p(\Psi(J)) = -\vphi_p(Y(\Psi(J))) = M(Y(\Psi(J)),p) = M(\Psi(Y(J)),p),
$$
here we used Proposition \ref{Properties of alcove paths model} \eqref{Properties of alcove paths model 4}. Again, by Lemma \ref{M(Psi(J),p)}, we have either $M(Y(J),p) = 0$ or $M(Y(J),p) = -1$. If $M(Y(J),p) = -1$, then we have
$$
(-\wt(Y(J)),\alpha_p^\vee) = -1.
$$
This contradicts that
$$
0 = M(J,p) \leq (-\wt(J),\alpha_p^\vee) = (-\wt(Y(J)), \alpha_p^\vee).
$$
Therefore, we see that
$$
M(Y(J),p) = 0,
$$
and hence,
$$
\Ftil_p(Y(J)) = 0 = \Ftil_p(J),
$$
as desired.

Next, suppose that $M(J,p) < 0$. Recall from Corollary \ref{Nonempty} that
$$
\{ i \in I(J,p) \mid l^J_i = M(J,p) \} \neq \emptyset.
$$
Then, by Lemma \ref{M(Psi(J),p)}, there exists $i \in \Itil(J,p)$ such that $M(\Psi(J),p) = M(J,p) = l^J_i$. Now, it is clear that
$$
\Ftil_p(\Psi(J)) = \Psi(\Ftil_p(J)).
$$
In particular, we have
$$
\ol{w}(\Ftil_p(\Psi(J))) = \ol{w}(\Psi(\Ftil_p(J))) = w_\circ = \ol{w}(\Psi(J)).
$$
Since $Y : \clA(\Pi''_1) \rightarrow \clA(\Pi''_2)$ is an isomorphism of crystals, we compute as
\begin{align}
\begin{split}
\ol{w}(\Ftil_p(\Psi(Y(J)))) &= \ol{w}(Y(\Ftil_p(\Psi(J)))) = \ol{w}(\Ftil_p(\Psi(J))) = \ol{w}(\Psi(J)) = \ol{w}(Y(\Psi(J))) = \ol{w}(\Psi(Y(J))).
\end{split} \nonumber
\end{align}
By Proposition \ref{Properties of alcove paths model} \eqref{Properties of alcove paths model 6}--\eqref{Properties of alcove paths model 7}, this implies that there exists $k \in \Itil(\Psi(Y(J)),p)$ such that $M(\Psi(Y(J)),p) = l^{\Psi(Y(J))}_k$. Hence, by Lemma \ref{M(Psi(J),p)} again, we see that
$$
M(Y(J),p) = M(\Psi(Y(J)),p) = l^{Y(J)}_k.
$$
Therefore, we have
$$
\Ftil_p(\Psi(Y(J))) = \Psi(\Ftil_p(Y(J))),
$$
and hence,
$$
\Psi(\Ftil_p(Y(J))) = \Ftil_p(\Psi(Y(J))) = \Ftil_p(Y(\Psi(J))) = Y(\Ftil_p(\Psi(J))) = Y(\Psi(\Ftil_p(J))) = \Psi(Y(\Ftil_p(J))).
$$
Since $\Psi$ is injective, this implies that
$$
\Ftil_p(Y(J)) = Y(\Ftil_p(J)),
$$
as desired.

The commutativity of $Y$ and $\Etil_p$ can be proved similarly. Thus, the proof completes.
\end{proof}

\begin{cor}\label{extended alcove path is B(lm)}
For each $\Pi \in \widetilde{\AP}(\lm)$, the set $\clA(\Pi)$ equipped with maps $\wt,\Etil_p,\Ftil_p$, $p \in I$ is a crystal isomorphic to $\clB(\lm)$. Moreover, the Yang-Baxter moves induce isomorphisms of crystals among $\clA(\Pi)$'s, $\Pi \in \widetilde{\AP}(\lm)$.
\end{cor}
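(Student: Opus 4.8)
The plan is to deduce the corollary formally from the Proposition asserting that the bijection $\Phi$ commutes with $\wt,\Etil_p,\Ftil_p$ (hence that $\clA(\widetilde{\Pi})\simeq\clB(\lm)$ for the specific extension $\widetilde{\Pi}$ produced by Lemma \ref{extendable of alcove path}) together with the preceding Theorem, which transports this crystal structure along Yang-Baxter moves. First note that $\AP(\lm)\neq\emptyset$, since $A_\circ-\lm$ is an alcove and hence joined to $A_\circ$ by a reduced alcove path. Fix any $\Pi_0\in\AP(\lm)$ and let $\Pi_1\in\widetilde{\AP}(\lm)$ be the extension $\widetilde{\Pi_0}$ of Lemma \ref{extendable of alcove path}. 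By the aforementioned Proposition, $\clA(\Pi_1)$, equipped with its intrinsic maps $\wt,\Etil_p,\Ftil_p$, is a crystal isomorphic to $\clB(\lm)$, with $\Phi:\clA(\Pi_0)\to\clA(\Pi_1)$ an isomorphism.

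Next, let $\Pi\in\widetilde{\AP}(\lm)$ be arbitrary. By Lemma \ref{reduced alcove path and reduced expression}, $\Pi_1$ and $\Pi$ both correspond to reduced expressions of the unique $v\in W_\aff$ with $vA_\circ=w_\circ A_\circ-\lm$; by Matsumoto's theorem these reduced expressions are connected by braid moves, i.e.\ by Yang-Baxter moves, so some sequence of Yang-Baxter moves transforms $\Pi_1$ into $\Pi$. Applying the preceding Theorem (with $\Pi_0$ in the role of its $\Pi$ and $\Pi$ in the role of its $\Pi_2$) yields a bijection $Y:\clA(\Pi_1)\to\clA(\Pi)$ commuting with $\wt,\Etil_p,\Ftil_p$ for all $p\in I$. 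Since the crystal axioms, and the definitions of $\vep_p,\vphi_p$, are phrased purely in terms of the structure maps, they transfer verbatim along the structure-preserving bijection $Y$; hence $\clA(\Pi)$ with its intrinsic maps is a crystal isomorphic to $\clB(\lm)$, with $Y$ (equivalently $Y\circ\Phi$) an isomorphism.

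For the final assertion, given $\Pi,\Pi'\in\widetilde{\AP}(\lm)$ and a sequence of Yang-Baxter moves carrying $\Pi$ to $\Pi'$, I would route it through $\Pi_1$: reversing a sequence of Yang-Baxter moves from $\Pi_1$ to $\Pi$ gives one from $\Pi$ to $\Pi_1$ inducing the inverse bijection, and concatenating with a sequence from $\Pi_1$ to $\Pi'$ shows that the bijection $\clA(\Pi)\to\clA(\Pi')$ induced by the given sequence equals $Y'\circ Y^{-1}$, where $Y:\clA(\Pi_1)\to\clA(\Pi)$ and $Y':\clA(\Pi_1)\to\clA(\Pi')$ are as in the Theorem. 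A composite of bijections commuting with $\wt,\Etil_p,\Ftil_p$ again commutes with them, so this is an isomorphism of crystals. The only delicate point --- the ``main obstacle'', modest as it is --- is that the preceding Theorem is formulated only with the distinguished path $\Pi_1$ as the source of the Yang-Baxter moves, so one must verify that the bijections induced by Yang-Baxter moves compose and invert functorially; this is built into their definition (one braid move at a time) as recalled in Subsection \ref{subsection, Yang-Baxter move}, and once it is noted the general statement follows formally.
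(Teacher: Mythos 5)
Your proposal is correct and follows essentially the same route as the paper, which states this corollary as an immediate consequence of the preceding Proposition (giving $\clA(\widetilde{\Pi})\simeq\clB(\lm)$ for the extension of an ordinary alcove path) and the preceding Theorem (transporting the structure along Yang-Baxter moves to an arbitrary $\Pi\in\widetilde{\AP}(\lm)$). Your extra remarks on routing an arbitrary pair $\Pi,\Pi'$ through the distinguished path $\Pi_1$ and on composing/inverting the bijections induced by individual Yang-Baxter moves just make explicit what the paper leaves implicit.
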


\begin{cor}\label{Properties of extended alcove paths model}
Let $\Pi = (A_0,\ldots,A_u) \in \widetilde{\AP}(\lm)$, $J = \{ j_1,\ldots, j_N \} \in \clA(\Pi)$, $p \in I$. Then, the following hold:
\begin{enumerate}
\item\label{Properties of extended alcove paths model 1} $M(J,p) \leq 0$.
\item $\vphi_p(J) = M(J,p)$.
\item\label{Properties of extended alcove paths model 5} $\vep_p(J) = (-\wt(J),\alpha_p^\vee) - M(J,p)$.
\item\label{Properties of extended alcove paths model 2} We have
$$
\Ftil_p(J) = \begin{cases}
0 \qu & \IF M(J,p) = 0, \\
(J \setminus \{ j_m \}) \sqcup \{k\} \qu & \IF M(J,p) < 0,
\end{cases}
$$
where $m \in \{ 1,\ldots,N \}$ is such that $j_m = \min \{ i \in \Itil(J,p) \mid l^J_i = M(J,p) \}$, and $k = \max (I(J,p) \cap \{ 1,2,\ldots,j_m-1 \})$.
\item\label{Properties of extended alcove paths model 3} We have
$$
\Etil_p(J) = \begin{cases}
0 \qu & \IF M(J,p) = (-\wt(J),\alpha_p^\vee), \\
(J \setminus \{ j_k \}) \sqcup \{m\} \qu & \IF M(J,p) < (-\wt(J),\alpha_p^\vee),
\end{cases}
$$
where $k \in \{ 1,\ldots,N \}$ is such that $j_k = \max \{ i \in \Itil(J,p) \mid l^J_i = M(J,p) \}$, and $m = \min (I(J,p) \cap \{ j_k+1,j_k+2,\ldots,u \})$.
\end{enumerate}
\end{cor}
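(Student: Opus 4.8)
The plan is to transfer the assertions from the ordinary alcove path model, exploiting the two bridges built above: the crystal isomorphism $\Phi : \clA(\Pi_0) \to \clA(\widetilde{\Pi})$ attached to a path $\Pi_0 \in \AP(\lm)$ and its extension $\widetilde{\Pi} \in \widetilde{\AP}(\lm)$ from Lemma \ref{extendable of alcove path}, together with the Yang-Baxter moves relating the crystals $\clA(\Pi)$, $\Pi \in \widetilde{\AP}(\lm)$.

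I would prove items (1)--(3) first. For $\Pi = \widetilde{\Pi}$ as above, write $J = \Phi(J_0)$ with $J_0 \in \clA(\Pi_0)$. In the proof of the proposition on $\Phi$ it was shown that $\wt(\Phi(J_0)) = \wt(J_0)$ and $M(\Phi(J_0),p) = M(J_0,p)$, and that $\Phi$ is a crystal isomorphism; plugging these into Proposition \ref{Properties of alcove paths model} immediately gives $M(J,p) \leq 0$ together with the stated expressions of $\vphi_p(J)$ and $\vep_p(J)$ in terms of $M(J,p)$ and $\wt(J)$. For an arbitrary $\Pi \in \widetilde{\AP}(\lm)$, pick a sequence of Yang-Baxter moves joining $\Pi$ to such a $\widetilde{\Pi}$; by Corollary \ref{extended alcove path is B(lm)} it induces a crystal isomorphism $Y : \clA(\widetilde{\Pi}) \to \clA(\Pi)$. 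Crystal isomorphisms preserve $\wt,\vphi_p,\vep_p$, and the case analysis in the proof of the Yang-Baxter theorem above shows in addition that $M(Y(J),p) = M(J,p)$ (this is where the map $\Psi$ and Lemma \ref{M(Psi(J),p)} enter). Hence items (1)--(3) pass from $\widetilde{\Pi}$ to the general $\Pi$.

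Items (4)--(5) I would establish uniformly for every $\Pi \in \widetilde{\AP}(\lm)$, without any transfer, using the rigidity of the extended model. By Corollary \ref{extended alcove path is B(lm)} the operators $\Etil_p,\Ftil_p$ map $\clA(\Pi)$ into $\clA(\Pi) \sqcup \{0\}$, every admissible subset has cardinality exactly $N$ (Remark \ref{Remark on w(J)}), and $\wt(\Ftil_p(J)) = \wt(J) - \alpha_p$, $\wt(\Etil_p(J)) = \wt(J) + \alpha_p$ whenever the left-hand sides are nonzero. Inspecting the definition of $\Ftil_p$: its first clause applies exactly when $M(J,p) = 0$, by item (1); its last clause, which would yield the set $J \sqcup \{k'\}$ of cardinality $N+1$, can never occur; hence when $M(J,p) < 0$ we are always in the middle clause $(J \setminus \{m_F\}) \cup \{k_F\}$, and there one must have $m_F \in J$ and $k_F \notin J$, for otherwise the cardinality would differ from $N$ or the weight would not drop by $\alpha_p$. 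This is precisely formula (4) with $j_m = m_F$ and $k = k_F$; matching these with the indices in the statement, and checking they are well defined, is then a direct unwinding of the definitions using $\Itil(J,p) = I(J,p) \cap J$. The argument for $\Etil_p$ and formula (5) is the mirror image, the clause $k_E = k'$ now being excluded because it would lower the cardinality to $N-1$.

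The step I expect to be the main obstacle is making the identity $M(Y(J),p) = M(J,p)$ precise---it has to be read off the case analysis in the Yang-Baxter theorem's proof---and, more generally, the bookkeeping of the level labels $l^J_i$ when one passes between the folded galleries on $\widetilde{\Pi}$, on $\Pi''$, and on $\Pi_0$; here one uses repeatedly that, by Lemma \ref{added facet contains lm}, every appended facet labelled by a root $\alpha_p$ carries level $(-\wt(J),\alpha_p^\vee)$, so it contributes only that value to the data defining $M(J,p)$. The conceptual content is slight once one observes that the equality $|J| = N$ in the extended model automatically suppresses the exceptional clauses of the Kashiwara operators.
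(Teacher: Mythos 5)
The paper states this corollary without writing out a proof, leaving it as a byproduct of the preceding development (the proposition on $\Phi$, Corollary \ref{Nonempty}, Lemma \ref{M(Psi(J),p)}, and the Yang--Baxter theorem), so what you have done is reconstruct the missing argument, and your reconstruction is correct. For (1)--(3) you follow essentially the intended route: the case $\Pi=\widetilde{\Pi}$ is Corollary \ref{Nonempty} (obtained from $M(\Phi(J_0),p)=M(J_0,p)$, $\wt(\Phi(J_0))=\wt(J_0)$ and Proposition \ref{Properties of alcove paths model}), and the identity $M(Y(J),p)=M(J,p)$ that you flag as the main obstacle is indeed extractable from the Yang--Baxter theorem's proof: the case $M(J,p)=0$ is treated there explicitly, and when $M(J,p)<0$ one combines $\vphi_p(\Psi(Y(J)))=\vphi_p(Y(\Psi(J)))=\vphi_p(\Psi(J))$ with Proposition \ref{Properties of alcove paths model} \eqref{Properties of alcove paths model 4} and Lemma \ref{M(Psi(J),p)}. (Alternatively, item (1) for arbitrary $\Pi$ follows directly from Lemma \ref{M(Psi(J),p)} and $M(\Psi(J),p)\leq 0$, with no transfer.) Your treatment of (4)--(5) is a nice self-contained rigidity argument not spelled out in the paper: since $\clA(\Pi)$ is closed under the operators, every admissible subset has cardinality $N$, and the weight shifts by $\pm\alpha_p$, the exceptional clauses are excluded and moreover $m_F\in J$, $k_F\notin J$ (resp.\ $k_E\in J$, $m_E\notin J$), which is exactly what identifies $j_m$ and $j_k$ with minima/maxima over $\Itil(J,p)$ rather than $I(J,p)$. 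One small point of precision: in ruling out the clause $k_E=k'$ of $\Etil_p$, cardinality alone only covers the subcase $k'\in J$; if $k'\notin J$ the formula would return $J$ itself, which must instead be excluded by the weight shift --- the same dual argument you used for $\Ftil_p$, so state it explicitly rather than by "mirror image".
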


\subsection{Almost $\bfi_A$-decreasing subsets}\label{Almost decreasing subsets}
Let $\Pi = (A_0,\ldots,A_u) \in \widetilde{\AP}(\lm)$ with $\Gamma(\Pi) = (\beta_1,\ldots,\beta_u)$. For $1 \leq i \leq u$, set
$$
N(i) = N(i;\Pi) := \sharp \{ k > i \mid \beta_k = \beta_i \}.
$$

Recall from Proposition \ref{Characterization of extended chain} that the number of occurrences of $\beta_i$ in $\Gamma(\Pi)$ is equal to $(\lm,\beta_i^\vee) + 1$ and that $l_i = -\sharp\{ j < i \mid \beta_j = \beta_i \}$. Then, we have
\begin{align}\label{l, N, and lm}
-l_i + N(i) = (\lm,\beta_i^\vee).
\end{align}

From now on, assume that our root system is of type $A_{n-1}$. In this case, we have $\alpha^\vee = \alpha$ for all $\alpha \in \Phi$. Hence, we do not distinguish them. Recall the reduced word $\bfi_A = (i_1,\ldots,i_N)$ from Subsection \ref{GT patterns}.

\begin{defi}\normalfont
Let $\Pi \in \widetilde{\AP}(\lm)$ and $J = \{ j_1, \ldots, j_N \} \in \clA(\Pi)$. We say that $J$ is an almost $\bfi_A$-decreasing subset if it satisfies the following:
\begin{itemize}
\item $\{ \beta_j \mid j \in J \} = \Phi^+$.
\item For each $1 \leq k < l \leq N$, we have either $(\beta_{j_k},\beta_{j_l}) = 0$ or $\beta_{j_l} <_{\bfi_A} \beta_{j_k}$.
\end{itemize}
For an almost $\bfi_A$-decreasing subset $J = \{ j_1,\ldots, j_N \}$ and $1 \leq i < j \leq n$, we set $N_{i,j} = N_{i,j}(J) := N(j_k)$ if $\beta_{j_k} = \eps_i-\eps_j$. Also, we set $N(J) := (N_{i,j})_{1 \leq i < j \leq n}$.
\end{defi}

Now, we are ready to state our main result in this paper.

\begin{theo}\label{Main Theorem}
Let $\lm \in \Lm^+$ and $\Pi \in \widetilde{\AP}(\lm)$.
\begin{enumerate}
\item\label{Main Theorem 1} Each $J \in \clA(\Pi)$ can be transformed by a sequence of Yang-Baxter moves into an admissible subset $J'$ such that $\Etil_{i_k}^l \Etil_{i_{k-1}}^{\max} \cdots \Etil_{i_1}^{\max}(J')$ is an almost $\bfi_A$-decreasing subset for all $1 \leq k \leq N$ and $0 \leq l \leq \vep_{i_k}(\Etil_{i_{k-1}}^{\max} \cdots \Etil_{i_1}^{\max}(J'))$.
\item\label{Main Theorem 2} Let $J \in \clA(\Pi)$, and $J'$ be as in \eqref{Main Theorem 1}. Then, $N(J')$ is independent of the choice of $J'$; due to this result, we may define $N_{i,j}(J) := N_{i,j}(J')$ for each $1 \leq i < j \leq n$, and $N(J) := N(J')$.
\item\label{Main Theorem 3} For each $J \in \clA(\Pi)$, the tuple $\bfa(J) = (a_{i,j}(J))_{1 \leq i \leq j \leq n}$ defined by
$$
a_{i,j}(J) = \lm_i - N_{i,j}(J) \qu (\text{we set $N_{i,j}(J) = 0$ if $i = j$})
$$
is a Gelfand-Tsetlin pattern of shape $\lm$. Moreover, this assignment gives rise to an isomorphism of crystals between $\clA(\Pi)$ and $\GT(\lm)$.
\end{enumerate}
\end{theo}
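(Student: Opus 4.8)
The strategy is to compute the $\bfi_A$-string datum of an admissible subset once it has been put into the normal form of part \eqref{Main Theorem 1}, to identify it with the formula of Proposition \ref{String datum of GT}, and then to apply Lemma \ref{sufficient condition on isomorphisms}. For part \eqref{Main Theorem 1} itself I would work with one distinguished reduced alcove path $\Pi_0\in\widetilde{\AP}(\lm)$ whose root sequence $\Gamma(\Pi_0)$ is ``sorted'' with respect to the reflection order $\leq_{\bfi_A}$, as far as the conditions of Proposition \ref{Characterization of extended chain} permit; such a path can be produced by a lexicographic construction of the kind already used to build the auxiliary path $\Pi'$. The point is that on $\Pi_0$ the admissible subsets occurring among the successive raising images $\Etil_{i_k}^l\Etil_{i_{k-1}}^{\max}\cdots\Etil_{i_1}^{\max}$ of members of $\clA(\Pi_0)$ are all almost $\bfi_A$-decreasing, which one checks from the Bruhat-chain condition defining admissibility together with the explicit description of $\Etil_p$ in Corollary \ref{Properties of extended alcove paths model}. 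Since Yang-Baxter moves induce isomorphisms of crystals among the $\clA(\Pi)$, $\Pi\in\widetilde{\AP}(\lm)$ (Corollary \ref{extended alcove path is B(lm)}), and commute with the $\Etil_p$, transporting an arbitrary $J\in\clA(\Pi)$ into $\clA(\Pi_0)$ produces the required $J'$.

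The heart of the proof is the explicit evaluation of $\str_{\bfi_A}(J')$ for a normal form $J'$, which I would carry out by induction along $\bfi_A=(i_1,\dots,i_N)$: at stage $k$ the subset $J'_{k-1}:=\Etil_{i_{k-1}}^{\max}\cdots\Etil_{i_1}^{\max}(J')$ is almost $\bfi_A$-decreasing, so Corollary \ref{Properties of extended alcove paths model} expresses the $k$-th string-datum entry in terms of a single facet level while describing how $\Etil_{i_k}^{\max}$ relocates the relevant index and shifts $\wt$. Converting levels into the root-multiplicity numbers $N(\cdot)$ via equation \eqref{l, N, and lm}, and exploiting the column-by-column shape of $\bfi_A$ to keep control of the accumulated weight shift, one should arrive at
\[
\str_{\bfi_A}(J')_{i,j}\;=\;\sum_{m=1}^{j-i}\bigl(N_{m,\,m+n-j+1}(J')-N_{m,\,m+n-j}(J')\bigr).
\]
By Proposition \ref{String datum of GT} the right-hand side is exactly the $(i,j)$-entry of $\str_{\bfi_A}(\bfa(J'))$, where $\bfa(J')=(\lm_i-N_{i,j}(J'))_{1\le i\le j\le n}$ is for the moment merely a tuple of integers (with $N_{i,i}=0$); that is, $\str_{\bfi_A}(J')$ is the image of $\bfa(J')$ under the linear substitution appearing in that proposition. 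This substitution is triangular, hence invertible, on the affine space of tuples with prescribed diagonal entries $\lm_i$, and $\str_{\bfi_A}(J')$, being a genuine string datum of $\clB(\lm)\simeq\GT(\lm)$, is the image of a unique element of $\GT(\lm)$ under it; comparing, that element must be $\bfa(J')$, so $\bfa(J')\in\GT(\lm)$.

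Parts \eqref{Main Theorem 2} and \eqref{Main Theorem 3} now follow formally. Since Yang-Baxter moves are isomorphisms of crystals, $\str_{\bfi_A}(J')=\str_{\bfi_A}(J)$ for every normal form $J'$ of $J$; by the previous step $\bfa(J')$ is the unique Gelfand-Tsetlin pattern with $\bfi_A$-string datum $\str_{\bfi_A}(J)$, hence independent of the choice of $J'$, proving \eqref{Main Theorem 2} and legitimizing $\bfa(J):=\bfa(J')$, $N_{i,j}(J):=N_{i,j}(J')$. The resulting map $\clA(\Pi)\to\GT(\lm)$, $J\mapsto\bfa(J)$, satisfies $\str_{\bfi_A}(\bfa(J))=\str_{\bfi_A}(J)$; it is injective because $\str_{\bfi_A}$ is injective on $\clA(\Pi)\simeq\clB(\lm)$, and it is a bijection since $|\clA(\Pi)|=\dim V(\lm)=|\GT(\lm)|$. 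As both $\clA(\Pi)$ and $\GT(\lm)$ are isomorphic to $\clB(\lm)$ (Corollary \ref{extended alcove path is B(lm)} and Subsection \ref{GT patterns}), Lemma \ref{sufficient condition on isomorphisms} shows that $J\mapsto\bfa(J)$ is an isomorphism of crystals. I expect the main obstacle to be the string-datum computation above, together with the part of \eqref{Main Theorem 1} asserting that the almost $\bfi_A$-decreasing normal form survives along the entire $\bfi_A$-chain of maximal raising operators: both require tracking simultaneously the facet levels and the evolution of the weight under the whole sequence of operators, which is exactly the technical input that the normal form of part \eqref{Main Theorem 1} is designed to furnish.
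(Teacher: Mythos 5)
Your overall architecture coincides with the paper's (a distinguished path, transport by Yang--Baxter moves, a string-datum computation, comparison with Proposition \ref{String datum of GT}, and Lemma \ref{sufficient condition on isomorphisms}), but the two load-bearing steps are asserted rather than proved. First, for part \eqref{Main Theorem 1} you never show that the admissible subsets of your sorted path $\Pi_0$, or of their raising images, are almost $\bfi_A$-decreasing; ``one checks from the Bruhat-chain condition together with Corollary \ref{Properties of extended alcove paths model}'' is not an argument, and note that a fully $\leq_{\bfi_A}$-sorted $\Gamma(\Pi_0)$ cannot exist, since Proposition \ref{Characterization of extended chain} forces the occurrences of $\alpha,\beta,\gamma$ with $\gamma^\vee=\alpha^\vee+\beta^\vee$ to interleave. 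This is why the paper writes down the explicit interleaved sequence $\Gamma(\lm)$ and then proves the almost-decreasing property \emph{indirectly}: Proposition \ref{J(a)} constructs, for each Gelfand-Tsetlin pattern $\bfa$, an almost $\bfi_A$-decreasing admissible subset $J(\bfa)$ with prescribed multiplicities $N_{i,j}=\lm_i-a_{i,j}$, and the count $|\GT(\lm)|=|\clB(\lm)|=|\clA(\Pi(\lm))|$ then forces every $J\in\clA(\Pi(\lm))$ to be of this form. That bijection-plus-counting argument is the missing idea; without it (or a substitute) part \eqref{Main Theorem 1} is unproven.

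Second, the string-datum evaluation, which you yourself call the heart of the proof, is left at ``one should arrive at,'' and your plan for it is circular. To compute $\vep_p$ one must identify where the minimum $M(\cdot,p)$ among the levels $l^{J}_{p,n},l^{J}_{p-1,n-1},\ldots,l^{J}_{1,n-p+1}$ and $(-\wt(J),\alpha_p)$ is attained; in the paper this is exactly Lemma \ref{vep(J)}, whose proof uses the inequalities $N_{a,b}\geq N_{a,b-1}$ and $\lm_a-\lm_{a+1}+N_{a+1,b}\geq N_{a,b}$ of Corollary \ref{aij and nij} --- that is, precisely the statement that $\bfa(J')$ is a Gelfand-Tsetlin pattern. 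You propose instead to \emph{deduce} GT-ness of $\bfa(J')$ from the string-datum formula (your observation that the substitution in Proposition \ref{String datum of GT} is invertible on tuples with fixed diagonal is correct), but the formula is not available until those inequalities are known. The paper breaks this circle by proving $\bfa(J')\in\GT(\lm)$ beforehand: for $\Pi(\lm)$ via Proposition \ref{J(a)}, and for a general normal form $J'$ by a short direct argument using $j_{b,c}<j_{a,c}<j_{a,b}$, the interleaving condition of Proposition \ref{Characterization of extended chain}, and equation \eqref{l, N, and lm}; only then does it rerun the computation of Proposition \ref{String datum of J} verbatim. Your concluding derivation of parts \eqref{Main Theorem 2} and \eqref{Main Theorem 3} from the string-datum identity is sound, but it rests on these two unestablished inputs.
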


Before moving to a detailed discussion, we outline the proof. First, we construct a certain alcove path $\Pi(\lm) \in \widetilde{\AP}(\lm)$, and show that each $J \in \clA(\Pi(\lm))$ is an almost $\bfi_A$-decreasing subset. This, together with Corollary \ref{extended alcove path is B(lm)} proves item \eqref{Main Theorem 1}. Next, we prove item \eqref{Main Theorem 3} for $\Pi = \Pi(\lm)$ by comparing the $\bfi_A$-string data of $J$ and $\bfa(J)$. Finally, we prove item \eqref{Main Theorem 2} and \eqref{Main Theorem 3} by computing the $\bfi_A$-string datum of $J'$.

The rest of this subsection is devoted to investigating basic properties of almost $\bfi_A$-decreasing subsets.

Let $\Pi = (A_0,\ldots,A_u) \in \widetilde{\AP}(\lm)$ with $\Gamma(\Pi) = (\beta_1,\ldots,\beta_u)$. Let $J = \{ j_1,\ldots, j_N \} \in \clA(\Pi)$ be an almost $\bfi_A$-decreasing subset. Also, let us write $J = (A^J_0,F^J_1,A^J_1,\ldots,F^J_u,A^J_u,\lm^J)$ with $F^J_i \subset H_{\beta^J_i,l^J_i}$, $\beta^J_i \in \Phi^+$, $l^J_i \in \Z$. Recall that $\{ \beta_j \mid j \in J \} = \Phi^+$. Then, for each $1 \leq a < b \leq n$, there exists a unique $j_{a,b} \in J$ such that $\beta_{j_{a,b}} = \eps_a-\eps_b$. For notational simplicity, we write $\beta_{a,b} := \beta_{j_{a,b}}$, $\beta^J_{a,b} := \beta^J_{j_{a,b}}$, $l_{a,b} := l_{j_{a,b}}$, and $l^J_{a,b} := l^J_{j_{a,b}}$. We will use these notation whenever we consider an almost $\bfi_A$-decreasing subset.

\begin{prop}\label{Properties of almost decreasing subset}
Let $1 \leq a < b \leq n$, $p \in \{ 1,\ldots,n-1 \}$, and $q \in \{ 1,\ldots,p-1 \}$. Then, we have the following:
\begin{enumerate}
\item\label{Properties of almost decreasing subset 1} $j_{a+1,b},j_{a,b+1},j_{a+1,b+1} < j_{a,b}$.
\item\label{Properties of almost decreasing subset 2} $\beta^J_{a,b} = \alpha_{n-(b-a)}$.
\item\label{Properties of almost decreasing subset 3} $\Itil(J,p) = \{ j_{p,n}, j_{p-1,n-1}, \cdots, j_{1,n-p+1} \}$.
\item\label{Properties of almost decreasing subset 4} $l^J_{p,n} = N_{p,n} - N_{p+1,n} - (\lm_p - \lm_{p+1})$.
\item\label{Properties of almost decreasing subset 5} $l^J_{q,n-p+q} - l^J_{q+1,n-p+q+1} = N_{q,n-p+q} - N_{q+1,n-p+q} - N_{q,n-p+q+1} + N_{q+1,n-p+q+1}$.
\item\label{Properties of almost decreasing subset 6} $\wt(J) = w_\circ(\lm) - \sum_{m=1}^{n-1} \sum_{\substack{1 \leq c < d \leq n \\ d-c=n-m}} l_{c,d} \alpha_m$.
\item\label{Properties of almost decreasing subset 7} $(-\wt(J),\alpha_p) - l^J_{1,n-p+1} = N_{1,n-p+1} - N_{1,n-p}$.
\end{enumerate}
\end{prop}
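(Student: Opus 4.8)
The plan is to prove \eqref{Properties of almost decreasing subset 1} directly from the definitions, then \eqref{Properties of almost decreasing subset 2} (which is the crux), deduce \eqref{Properties of almost decreasing subset 3}, and finally derive the weight formula \eqref{Properties of almost decreasing subset 6} and the level identities \eqref{Properties of almost decreasing subset 4}, \eqref{Properties of almost decreasing subset 5}, \eqref{Properties of almost decreasing subset 7} from a single computation of the translation parts of the affine maps $w_{1,m-1}(J)$.

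For \eqref{Properties of almost decreasing subset 1}: the roots $\eps_{a+1}-\eps_b$ and $\eps_a-\eps_{b+1}$ both have nonzero pairing with $\eps_a-\eps_b$ and satisfy $\eps_a-\eps_b<_{\bfi_A}\eps_{a+1}-\eps_b$ and $\eps_a-\eps_b<_{\bfi_A}\eps_a-\eps_{b+1}$, by the explicit description of $<_{\bfi_A}$ recalled before Proposition \ref{String datum of GT}. Since a non-orthogonal pair of positive roots must occur in an almost $\bfi_A$-decreasing subset in $<_{\bfi_A}$-decreasing order, this gives $j_{a+1,b}<j_{a,b}$ and $j_{a,b+1}<j_{a,b}$; for $j_{a+1,b+1}$ one applies the same reasoning to the pair $\{\eps_{a+1}-\eps_{b+1},\eps_{a+1}-\eps_b\}$, obtaining $j_{a+1,b+1}<j_{a+1,b}<j_{a,b}$. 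Iterating \eqref{Properties of almost decreasing subset 1} produces the chain $j_{p,n}<j_{p-1,n-1}<\cdots<j_{1,n-p+1}$, which will be used for \eqref{Properties of almost decreasing subset 3}.

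The main work is \eqref{Properties of almost decreasing subset 2}. Writing $j_k=j_{a,b}$, the folding description of the gallery $\phi_J(\gamma(\Pi;\mu))$ gives $F^J_{j_k}=w_{1,k-1}(J)(F_{j_k})$, so $\beta^J_{j_k}$ equals $\ol{w}_{1,k-1}(J)(\eps_a-\eps_b)$ up to sign; admissibility of $J$ forces $\ell(\ol{w}_{1,k-1}(J)s_{\beta_{j_k}})=\ell(\ol{w}_{1,k-1}(J))+1$, hence in fact $\beta^J_{j_k}=\ol{w}_{1,k-1}(J)(\eps_a-\eps_b)>0$. It remains to show this positive root is $\alpha_{n-(b-a)}$; equivalently, that the permutation $\ol{w}_{1,k-1}(J)=s_{\beta_{j_1}}\cdots s_{\beta_{j_{k-1}}}$ sends $a\mapsto n-b+a$ and $b\mapsto n-b+a+1$. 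I would establish this by induction on $k$, using $\ol{w}_{1,k}(J)=s_{\beta^J_{j_k}}\ol{w}_{1,k-1}(J)$ and tracking the images of the indices $a$ and $b$: item \eqref{Properties of almost decreasing subset 1} identifies which transpositions are applied before position $k$ (namely $j_{a',b'}$ with $a'\ge a$, $b'\ge b$, $(a',b')\ne(a,b)$, together with certain transpositions whose index sets are disjoint from $\{a,b\}$), and a token-tracking argument shows the latter do not affect $\ol{w}_{1,k-1}(J)(a)$ and $\ol{w}_{1,k-1}(J)(b)$. This bookkeeping is the step I expect to be the main obstacle; everything afterwards is organized counting. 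Given \eqref{Properties of almost decreasing subset 2}, item \eqref{Properties of almost decreasing subset 3} is immediate: $\Itil(J,p)=\{j\in J\mid\beta^J_j=\alpha_p\}=\{j_{a,b}\mid 1\le a<b\le n,\ b-a=n-p\}$, ordered increasingly by the chain from \eqref{Properties of almost decreasing subset 1}.

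For \eqref{Properties of almost decreasing subset 4}--\eqref{Properties of almost decreasing subset 7} I would compute translation parts. Since $s_{\beta_{j_l},l_{j_l}}=t_{l_{j_l}\beta_{j_l}}s_{\beta_{j_l}}$, pushing translations to the left in $w_{1,m-1}(J)=s_{F_{j_1}}\cdots s_{F_{j_{m-1}}}$ gives $w_{1,m-1}(J)=t_{\nu_{m-1}}\ol{w}_{1,m-1}(J)$ with $\nu_{m-1}=\sum_{l<m}l_{j_l}\,\ol{w}_{1,l-1}(J)(\beta_{j_l})=\sum_{l<m}l_{j_l}\beta^J_{j_l}$. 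Taking $m=N+1$, using $\ol{w}(J)=w_\circ$ (Remark \ref{Remark on w(J)}) and $\wt(J)=-w(J)(-\lm)=w_\circ(\lm)-\nu_N$, formula \eqref{Properties of almost decreasing subset 2} rewrites $\nu_N=\sum_{1\le c<d\le n}l_{c,d}\beta^J_{c,d}$ as $\sum_m(\sum_{d-c=n-m}l_{c,d})\alpha_m$, which is \eqref{Properties of almost decreasing subset 6}. For the remaining items, transforming $F_{j_m}\subset H_{\beta_{j_m},l_{j_m}}$ by $w_{1,m-1}(J)$ shows $l^J_{j_m}=l_{j_m}+(\nu_{m-1},(\beta^J_{j_m})^\vee)=l_{j_m}+\sum_{l<m}l_{j_l}(\beta^J_{j_l},\alpha_p^\vee)$ with $\alpha_p=\beta^J_{j_m}$; only $\beta^J_{j_l}\in\{\alpha_{p-1},\alpha_p,\alpha_{p+1}\}$ contribute, i.e.\ $\beta_{j_l}=\eps_c-\eps_d$ with $d-c\in\{n-p-1,n-p,n-p+1\}$, and items \eqref{Properties of almost decreasing subset 1}, \eqref{Properties of almost decreasing subset 3} pin down exactly which of these precede $j_m$. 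Substituting $l_{c,d}=N_{c,d}-(\lm_c-\lm_d)$ from \eqref{l, N, and lm} and cancelling the $\lm$-terms yields \eqref{Properties of almost decreasing subset 4} (case $j_m=j_{p,n}$) and \eqref{Properties of almost decreasing subset 5} (case $j_m=j_{q,n-p+q}$). Finally, \eqref{Properties of almost decreasing subset 7} follows by applying the same formula with $j_m=j_{1,n-p+1}$ and combining it with \eqref{Properties of almost decreasing subset 6} and the identity $w_\circ(\alpha_p)=-\alpha_{n-p}$, which give $(-\wt(J),\alpha_p)=(\lm_{n-p}-\lm_{n-p+1})+(\nu_N,\alpha_p)$; subtracting $l^J_{1,n-p+1}$ and telescoping leaves $N_{1,n-p+1}-N_{1,n-p}$.
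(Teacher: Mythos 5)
There is a genuine gap, and it sits exactly at the crux, item \eqref{Properties of almost decreasing subset 2}, which you yourself flag as ``the main obstacle'' and do not carry out. Your plan is to show that the permutation $\ol{w}_{1,k-1}(J)$ sends $a\mapsto n-b+a$ and $b\mapsto n-b+a+1$ by induction, arguing that the transpositions preceding $j_{a,b}$ are either of the form $j_{a',b'}$ with $a'\geq a$, $b'\geq b$ or have index sets disjoint from $\{a,b\}$, and that the latter ``do not affect'' the images of $a$ and $b$. That last claim is not valid as stated: in the product $s_{\beta_{j_1}}\cdots s_{\beta_{j_{k-1}}}$ the factors are applied successively, and once a transposition such as $(a,d)$ with $d>b$ has acted, the intermediate image of $a$ is no longer in $\{a,b\}$, so a later-acting transposition disjoint from $\{a,b\}$ can perfectly well move it. So the token-tracking would have to control the entire history of intermediate images, which is precisely the nontrivial part; without it, item \eqref{Properties of almost decreasing subset 2} (and hence \eqref{Properties of almost decreasing subset 3}) is unproved. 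The paper avoids this by a different device: it writes $H_{\beta^J_{a,b},l^J_{a,b}}$ as the image of $H_{\beta_{a,b},l_{a,b}}$ under the preceding folding reflections, iteratively discards the reflections in roots $<_{\bfi_A}\beta_{a,b}$ (these are forced to be orthogonal to everything in between, hence fix the hyperplane), and then computes explicitly to get the recursion $\beta^J_{a,b}=\beta^J_{a+1,b+1}$, $l^J_{a,b}=l_{a,b}-l_{a+1,b}-l_{a,b+1}+l_{a+1,b+1}+l^J_{a+1,b+1}$, with base case $b=n$ giving $\beta^J_{a,n}=\alpha_a$, $l^J_{a,n}=l_{a,n}-l_{a+1,n}$; item \eqref{Properties of almost decreasing subset 2} and the level identities \eqref{Properties of almost decreasing subset 4}, \eqref{Properties of almost decreasing subset 5}, \eqref{Properties of almost decreasing subset 7} then fall out of the same computation via \eqref{l, N, and lm}.

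The rest of your plan is sound and close to the paper. Item \eqref{Properties of almost decreasing subset 1} is the paper's argument verbatim, and your derivation of \eqref{Properties of almost decreasing subset 6} via translation parts ($w(J)=t_{\nu}w_\circ$ with $\nu=\sum_{i\in J}l_i\beta^J_i$) is exactly the paper's. Your route to \eqref{Properties of almost decreasing subset 4}, \eqref{Properties of almost decreasing subset 5}, \eqref{Properties of almost decreasing subset 7} through $l^J_{j_m}=l_{j_m}+(\nu_{m-1},(\beta^J_{j_m})^\vee)$ is a legitimate variant of the paper's recursion and does work, but the assertion that items \eqref{Properties of almost decreasing subset 1} and \eqref{Properties of almost decreasing subset 3} ``pin down exactly'' which contributing roots precede $j_m$ needs to be spelled out: one must iterate \eqref{Properties of almost decreasing subset 1} to see that $j_{c,d}>j_{a,b}$ whenever $(c,d)\leq(a,b)$ componentwise, and then check there are no ``incomparable'' roots of the relevant types $d-c\in\{n-p-1,n-p,n-p+1\}$; this verification is short but it is doing real work, and as written your sketch skips it. In any case, all of \eqref{Properties of almost decreasing subset 3}--\eqref{Properties of almost decreasing subset 7} hinge on \eqref{Properties of almost decreasing subset 2} and the accompanying level recursion, so until that step is actually proved the proposal does not establish the proposition.
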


\begin{proof}
Let us prove \eqref{Properties of almost decreasing subset 1}. Since $(a,b) <_{\bfi_A} (a+1,b), (a,b+1)$ and $(\beta_{a,b},\beta_{a+1,b}) = (\beta_{a,b},\beta_{a,b+1}) = 1$, we must have $j_{a+1,b},j_{a,b+1} < j_{a,b}$. Replacing $(a,b)$ by $(a+1,b)$, we obtain $j_{a+1,b+1} < j_{a+1,b}$. This implies $j_{a+1,b+1} < j_{a,b}$.

Next, let us prove \eqref{Properties of almost decreasing subset 2}. By the definitions of $\beta^J_{a,b}$ and $l^J_{a,b}$, we have
$$
H_{\beta^J_{a,b},l^J_{a,b}} = \left(\prod_{j \in J,\ j < j_{a,b}} s_{\beta_j,l_j}\right)(H_{\beta_{a,b},l_{a,b}}).
$$
Suppose that there exists $k \in J$ such that $k < j_{a,b}$ and $\beta_k <_{\bfi_A} \beta_{a,b}$. We can take the maximum $k$ with this property. Since $\beta_{a,b} \leq_{\bfi_A} \beta_{k'}$ for all $k' \in J$ such that $k < k' \leq j_{a,b}$, we have $\beta_k <_{\bfi_A} \beta_{k'}$. Hence, $(\beta_k,\beta_{k'}) = 0$ for all $k < k' \leq j_{a,b}$. Therefore,
$$
\left( \prod_{j \in J,\ j < j_{a,b}} s_{\beta_j,l_j} \right)(H_{\beta_{a,b},l_{a,b}}) = \left( \prod_{j \in J \setminus \{ k \},\ j < j_{a,b}} s_{\beta_j,l_j} \right)(H_{\beta_{a,b},l_{a,b}}).
$$
Repeating this procedure, we obtain
\begin{align}
\begin{split}
H_{\beta^J_{a,b},l^J_{a,b}} &= \left( \prod_{\substack{j \in J,\ j < j_{a,b} \\ \beta_{a,b} <_{\bfi_A} \beta_j}} s_{\beta_j,l_j} \right)(H_{\beta_{a,b},l_{a,b}}) \\
&= \left( \prod_{\substack{1 \leq c < d \leq n \\ j_{c,d} < j_{a,b} \AND (a,b) <_{\bfi_A} (c,d)}} s_{\beta_{c,d},l_{c,d}} \right)(H_{\beta_{a,b},l_{a,b}}),
\end{split} \nonumber
\end{align}
where the last product is taken in the decreasing order of $\leq_{\bfi_A}$.

First, consider the case when $b = n$. In this case, we have $(a,n) <_{\bfi_A} (c,d)$ if and only if $d = n$ and $c > a$. Since $(\beta_{c,n},\beta_{a,n}) = 1$ for all $c > a$, we see that $j_{c,n} < j_{a,n}$. By means of Lemma \ref{reflection of hyperplanes}, we compute as
\begin{align}
\begin{split}
H_{\beta^J_{a,n},l^J_{a,n}} &= \left( \prod_{a < c < n} s_{\beta_{c,n},l_{c,n}} \right)(H_{\beta_{a,n},l_{a,n}}) \\
&= \left( \prod_{a+1 < c < n} s_{\beta_{c,n},l_{c,n}} \right) s_{\beta_{a+1,n},l_{a+1,n}}(H_{\beta_{a,n},l_{a,n}}) \\
&= \left( \prod_{a+1 < c < n} s_{\beta_{c,n},l_{c,n}} \right) (H_{\beta_{a,a+1},l_{a,n}-l_{a+1,n}}) \\
&= H_{\beta_{a,a+1},l_{a,n}-l_{a+1,n}}.
\end{split} \nonumber
\end{align}
This implies that
$$
\beta^J_{a,n} = \beta_{a,a+1} = \eps_a-\eps_{a+1} = \alpha_a, \qu l^J_{a,n} = l_{a,n} - l_{a+1,n}.
$$

Next, consider the case when $b = a+1 < n$. In this case, we have $(a,a+1) <_{\bfi_A} (c,d) \leq_{\bfi_A} (a+1,a+2)$ if and only if $d = a+2$ and $c \leq a+1$. Noting that $(\beta_{i,a+2},\beta_{a,a+1}) = 0$ for all $i < a$, we compute as
\begin{align}
\begin{split}
&\left( \prod_{\substack{1 \leq c < d \leq n \\ j_{a+1,a+2} \leq j_{c,d} < j_{a,a+1} \AND (a,a+1) <_{\bfi_A} (c,d) \leq_{\bfi_A} (a+1,a+2)}} s_{\beta_{c,d},l_{c,d}} \right)(H_{\beta_{a,a+1},l_{a,a+1}}) \\
&= \left( \prod_{\substack{1 \leq c \leq a+1 \\ j_{a+1,a+2} \leq j_{c,a+2} < j_{a,a+1}}} s_{\beta_{c,a+2},l_{c,a+2}} \right)(H_{\beta_{a,a+1},l_{a,a+1}}) \\
&= s_{\beta_{a+1,a+2},l_{a+1,a+2}} s_{\beta_{a,a+2},l_{a,a+2}}(H_{\beta_{a,a+1},l_{a,a+1}}) \\
&= s_{\beta_{a+1,a+2},l_{a+1,a+2}}(H_{\beta_{a+1,a+2},-l_{a,a+1}+l_{a,a+2}}) \\
&= H_{\beta_{a+1,a+2},l_{a,a+1}-l_{a,a+2}+2l_{a+1,a+2}}.
\end{split} \nonumber
\end{align}
This implies that
$$
\beta^J_{a,a+1} = \beta^J_{a+1,a+2}, \AND l^J_{a,a+1} = l_{a,a+1} - l_{a,a+2} + l_{a+1,a+2} + l^J_{a+1,a+2}.
$$

Finally, let us consider the case when $b \neq n, a+1$. In a way similar to above, we see that
$$
\beta^J_{a,b} = \beta^J_{a+1,b+1}, \AND l^J_{a,b} = l_{a,b} - l_{a+1,b} - l_{a,b+1} + l_{a+1,b+1} + l^J_{a+1,b+1}.
$$

Summarizing, we obtain
$$
(\beta^J_{a,b},l^J_{a,b}) = \begin{cases}
(\alpha_a,l_{a,n} - l_{a+1,n}) \qu & \IF b = n, \\
(\beta^J_{a+1,b+1},l_{a,b} - l_{a+1,b} - l_{a,b+1} + l_{a+1,b+1} + l^J_{a+1,b+1}) \qu & \IF b < n,
\end{cases}
$$
where we understand $l_{c,d} = 0$ if $c = d$. In particular, we see that
$$
\beta^J_{a,b} = \beta^J_{n-b+a,n} = \alpha_{n-(b-a)}.
$$
This proves \eqref{Properties of almost decreasing subset 2}.

Now, \eqref{Properties of almost decreasing subset 3} is immediate from \eqref{Properties of almost decreasing subset 2}.

Let us prove \eqref{Properties of almost decreasing subset 4} and \eqref{Properties of almost decreasing subset 5}. Since
$$
-l_{i,j} + N_{i,j} = (\lm, \beta_{i,j}) = \lm_i-\lm_j,
$$
it follows that
$$
l^J_{a,b} = \begin{cases}
N_{a,n} - N_{a+1,n} - (\lm_a-\lm_{a+1}) \qu & \IF b = n, \\
N_{a,b} - N_{a+1,b} - N_{a,b+1} + N_{a+1,b+1} + l^J_{a+1,b+1} \qu & \IF b < n.
\end{cases}
$$
This implies \eqref{Properties of almost decreasing subset 4} and \eqref{Properties of almost decreasing subset 5}.

Let us prove \eqref{Properties of almost decreasing subset 6}. We compute as
$$
w(J) = s_{\beta_{j_1},l_{j_1}} \cdots s_{\beta_{j_N},l_{j_N}} = (t_{l_{j_1}\beta_{j_1}} s_{\beta_{j_1}}) \cdots (t_{l_{j_N}\beta_{j_N}} s_{\beta_{j_N}}) = t_\nu w_\circ,
$$
where
$$
\nu = \sum_{i \in J} l_i \beta^J_i.
$$
Here, we used the fact that
$$
s_\alpha t_{l\beta} = t_{l s_\alpha(\beta)} s_\alpha \qu \Forall \alpha,\beta \in \Phi, l \in \Z.
$$
By \eqref{Properties of almost decreasing subset 2}, we see that
$$
\nu = \sum_{m=1}^{n-1} \sum_{\substack{1 \leq c < d \leq n \\ d-c = n-m}} l_{c,d} \alpha_m.
$$
Hence, we obtain
$$
\wt(J) = -w(J)(-\lm) = -t_\nu w_\circ(-\lm) = w_\circ(\lm) - \sum_{m=1}^{n-1} \sum_{\substack{1 \leq c < d \leq n \\ d-c = n-m}} l_{c,d} \alpha_m,
$$
as desired.

Finally, let us prove \eqref{Properties of almost decreasing subset 7}. Using \eqref{Properties of almost decreasing subset 6}, we compute
$$
(\wt(J),\alpha_p) = (w_\circ(\lm),\alpha_p) - 2\sum_{\substack{1 \leq c < d \leq n \\ d-c=n-p}} l_{c,d} + \sum_{\substack{1 \leq c < d \leq n \\ d-c=n-p\pm 1}} l_{c,d}.
$$
Note that
$$
(w_\circ(\lm),\alpha_p) = -(\lm,\alpha_{n-p}) = -\lm_{n-p}+\lm_{n-p+1}.
$$
Also, we compute $l^J_{1,n-p+1}$ as follows:
\begin{align}
\begin{split}
l^J_{1,n-p+1} &= \sum_{i=1}^{p-1}(l^J_{i,n-p+i} - l^J_{i+1,n-p+i+1}) + l^J_{p,n} \\
&= \sum_{i=1}^{p-1}(l_{i,n-p+i} - l_{i+1,n-p+i} - l_{i,n-p+i+1} + l_{i+1,n-p+i+1}) + (l_{p,n} - l_{p+1,n}) \\
&= 2\sum_{\substack{1 \leq c < d \leq n \\ d-c=n-p}} l_{c,d} - \sum_{\substack{1 \leq c < d \leq n \\ d-c=n-p\pm 1}} l_{c,d} - l_{1,n-p+1} + l_{1,n-p}.
\end{split} \nonumber
\end{align}
Hence, we obtain
$$
(-\wt(J),\alpha_p) - l^J_{1,n-p+1} = \lm_{n-p} - \lm_{n-p+1} + l_{1,n-p+1} - l_{1,n-p} = N_{1,n-p+1} - N_{1,n-p},
$$
as desired. Thus, the proof completes.
\end{proof}

\begin{prop}\label{Properties of Ep(J)}
Let $p \in \{ 1,\ldots,n-1 \}$ and suppose that $\Etil_p(J) \neq 0$ and that $\Etil_p(J)$ is an almost $\bfi_A$-decreasing subset. Let us write $\Etil_p(J) = (J \setminus \{ j \}) \sqcup \{ m \}$ for some $j,m$.  Then, the following hold:
\begin{enumerate}
\item\label{Properties of Ep(J) 1} $j = j_{c,d}$, where $1 \leq c < d \leq n$ are such that $j_{c,d} = \max \{ j_{a,b} \mid b-a = n-p \AND l^J_{a,b} = M(J,p) \}$.
\item\label{Properties of Ep(J) 2} If $k \in J$ satisfies $j < k < m$, then $(\beta_j, \beta_k) = 0$.
\item\label{Properties of Ep(J) 5} $$
N_{i,j}(\Etil_p(J)) = \begin{cases}
N_{i,j} \qu & \IF (i,j) \neq (c,d), \\
N_{c,d}-1 \qu & \IF (i,j) = (c,d).
\end{cases}
$$
\end{enumerate}
\end{prop}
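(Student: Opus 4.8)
The plan is to deduce the three assertions in order, using the explicit description of $\Etil_p$ on $\clA(\Pi)$ (Corollary~\ref{Properties of extended alcove paths model}~\eqref{Properties of extended alcove paths model 3}), Proposition~\ref{Properties of almost decreasing subset}, and one decisive observation about the new index $m$. Since $\Etil_p(J)\neq 0$, that formula gives $M(J,p)<(-\wt(J),\alpha_p)$, $j=\max\{i\in\Itil(J,p)\mid l^J_i=M(J,p)\}$ and $m=\min\big(I(J,p)\cap\{j+1,\dots,u\}\big)$. By Proposition~\ref{Properties of almost decreasing subset}~\eqref{Properties of almost decreasing subset 3} we have $\Itil(J,p)=\{j_{a,b}\mid b-a=n-p\}$, so $j=j_{c,d}$ for the pair with $d-c=n-p$ realizing that maximum; this is \eqref{Properties of Ep(J) 1}. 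Note that $\beta_j=\eps_c-\eps_d$ and, since $j\in I(J,p)$, $\beta^J_j=\alpha_p$. The decisive observation is that $\beta_m=\eps_c-\eps_d$: the set $\Etil_p(J)=(J\setminus\{j\})\sqcup\{m\}$ is assumed almost $\bfi_A$-decreasing, so $\{\beta_{j'}\mid j'\in\Etil_p(J)\}=\Phi^+$, and since $\{\beta_{j'}\mid j'\in J\}=\Phi^+$ with $\beta_j=\eps_c-\eps_d$, deleting $j$ and inserting $m$ can restore all of $\Phi^+$ only if $\beta_m=\eps_c-\eps_d=\beta_j$.

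For \eqref{Properties of Ep(J) 2}, let $k\in J$ with $j<k<m$; then $k\in\Etil_p(J)$ and $k$ precedes $m$ there, so the almost $\bfi_A$-decreasingness of $\Etil_p(J)$ gives $(\beta_k,\beta_m)=0$ or $\beta_m<_{\bfi_A}\beta_k$, while that of $J$ (applied to $j<k$) gives $(\beta_j,\beta_k)=0$ or $\beta_k<_{\bfi_A}\beta_j$. If $(\beta_j,\beta_k)=(\beta_m,\beta_k)$ were nonzero, we would obtain $\beta_m<_{\bfi_A}\beta_k<_{\bfi_A}\beta_j=\beta_m$, which is impossible; hence $(\beta_j,\beta_k)=0$.

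For \eqref{Properties of Ep(J) 5}: when $(a,b)\neq(c,d)$ the position $j_{a,b}$ still lies in $\Etil_p(J)$ and carries the $\Pi$-root $\eps_a-\eps_b$, so $N_{a,b}(\Etil_p(J))=N(j_{a,b};\Pi)=N_{a,b}$; and because $\beta_m=\eps_c-\eps_d$, the element of $\Etil_p(J)$ carrying $\eps_c-\eps_d$ is $m$, so $N_{c,d}(\Etil_p(J))=N(m;\Pi)$. It then remains to show $N(m;\Pi)=N(j;\Pi)-1$, i.e.\ that $m$ is the first occurrence of $\eps_c-\eps_d$ in $\Gamma(\Pi)$ after $j$. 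Suppose instead that $j<k^\ast<m$ with $\beta_{k^\ast}=\eps_c-\eps_d$. Arguing exactly as in the proof of Proposition~\ref{Properties of almost decreasing subset}~\eqref{Properties of almost decreasing subset 2}, $\beta^J_{k^\ast}=\pm\big(s_{\beta_{j_1}}\cdots s_{\beta_{j_r}}\big)(\eps_c-\eps_d)$, where $j_1<\cdots<j_r$ are the elements of $J$ smaller than $k^\ast$; by \eqref{Properties of Ep(J) 2} the reflections $s_{\beta_{j_s}}$ with $j<j_s<k^\ast$ fix $\eps_c-\eps_d$, and as these are the rightmost factors (which act first on $\eps_c-\eps_d$) they drop out, while the remaining reflection $s_{\beta_j}=s_{\eps_c-\eps_d}$ only changes a sign, so $\beta^J_{k^\ast}$ reduces to $\pm\beta^J_j=\pm\alpha_p$, hence $\beta^J_{k^\ast}=\alpha_p$. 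This contradicts the minimality of $m$ in $I(J,p)\cap\{j+1,\dots,u\}$. Therefore no such $k^\ast$ exists and $N(m;\Pi)=N(j;\Pi)-1=N_{c,d}-1$, proving \eqref{Properties of Ep(J) 5}.

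The main obstacle is the observation $\beta_m=\eps_c-\eps_d$. It comes almost for free from the standing hypothesis that $\Etil_p(J)$ is almost $\bfi_A$-decreasing, but it is the lever for both \eqref{Properties of Ep(J) 2} and \eqref{Properties of Ep(J) 5}; after it, the only genuine computation is the short reflection bookkeeping in the third paragraph. One should establish the assertions in the order \eqref{Properties of Ep(J) 1}, then $\beta_m=\eps_c-\eps_d$, then \eqref{Properties of Ep(J) 2}, then \eqref{Properties of Ep(J) 5}, so as to avoid circularity.
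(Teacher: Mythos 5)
Your proposal is correct and follows essentially the same route as the paper: item (1) from the explicit $\Etil_p$ formula together with $\Itil(J,p)=\{j_{a,b}\mid b-a=n-p\}$, then $\beta_m=\beta_j$ from comparing the root sets of the two almost $\bfi_A$-decreasing subsets, then the two-sided $<_{\bfi_A}$ contradiction for (2), and finally the observation that an intermediate occurrence of $\eps_c-\eps_d$ would lie in $I(J,p)$, contradicting the minimality of $m$. Your third paragraph merely spells out the hyperplane bookkeeping (via Lemma \ref{reflection of hyperplanes}) that the paper compresses into the one line ``by (2), $\beta^J_i=\beta^J_j=\alpha_p$,'' so there is no substantive difference.
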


\begin{proof}
Item \eqref{Properties of Ep(J) 1} follows from Corollary \ref{Properties of extended alcove paths model} \eqref{Properties of extended alcove paths model 3}, and Proposition \ref{Properties of almost decreasing subset} \eqref{Properties of almost decreasing subset 3}.

Let us prove item \eqref{Properties of Ep(J) 2}. We have
$$
\{ \beta_k \mid k \in \Etil_p(J) \} = (\{ \beta_k \mid k \in J \} \setminus \{ \beta_j \}) \cup \{ \beta_m \}.
$$
Since both $J$ and $\Etil_p(J)$ are almost $\bfi_A$-decreasing subsets, we have
$$
\{ \beta_k \mid k \in J \} = \{ \beta_k \mid k \in \Etil_p(J) \} = \Phi^+.
$$
This implies that
$$
\beta_j = \beta_m.
$$
Let $k \in J$ be such that $j < k < m$. Assume contrary that $(\beta_j,\beta_k) \neq 0$. Since $J$ is an almost $\bfi_A$-decreasing subset, this implies that
$$
\beta_k <_{\bfi_A} \beta_j = \beta_m.
$$
On the other hand, since $\Etil_p(J) = (J \setminus \{j\}) \cup \{m\}$ is an almost $\bfi_A$-decreasing subset, we have
$$
\beta_m <_{\bfi_A} \beta_k.
$$
Thus, we obtain a contradiction. Hence, we conclude that
$$
(\beta_j,\beta_k) = 0.
$$



Let us prove item \eqref{Properties of Ep(J) 5}. It suffices to show that $m = \min \{ i > j \mid \beta_i = \beta_j \}$. Assume contrary that there exists $i > j$ such that $\beta_i = \beta_j$ and $i < m$. By \eqref{Properties of Ep(J) 2}, we see that
$$
\beta^J_i = \beta^J_j = \alpha_p.
$$
This implies that $i \in I(J,p)$. However, this contradicts that
$$
m = \min(I(J,p) \cap \{ j+1,\ldots,u \}).
$$
Thus, the proof completes.

\end{proof}

\subsection{Proof of Theorem \ref{Main Theorem} \eqref{Main Theorem 1}}
Let us construct the reduced alcove path $\Pi(\lm) \in \widetilde{\AP}(\lm)$. For $1 \leq i \leq n-1$, consider the following sequence of positive roots
\begin{align}
\begin{split}
\Gamma(i) := (&\eps_i-\eps_{n},\eps_i-\eps_{n-1},\ldots,\eps_i-\eps_{i+1}, \\
&\eps_{i-1}-\eps_{n},\eps_{i-1}-\eps_{n-1},\ldots,\eps_{i-1}-\eps_{i+1}, \\
&\ldots,\\
&\eps_1-\eps_{n},\eps_{1}-\eps_{n-1},\ldots,\eps_{1}-\eps_{i+1}),
\end{split} \nonumber
\end{align}
and set
\begin{align}
\begin{split}
\Gamma(\lm) := &\Gamma(n-1)^{\lm_{n-1}-\lm_n} \circ (\eps_{n-1}-\eps_n) \\
\circ &\Gamma(n-2)^{\lm_{n-2}-\lm_{n-1}} \circ (\eps_{n-2}-\eps_n,\eps_{n-2}-\eps_{n-1}) \\
\circ &\cdots \\
\circ &\Gamma(1)^{\lm_1-\lm_2} \circ (\eps_1-\eps_n,\eps_1-\eps_{n-1},\ldots,\eps_1-\eps_2),
\end{split} \nonumber
\end{align}
where $\circ$ means the concatenation of sequences of positive roots.

\begin{ex}\label{Example 431}\normalfont
Suppose that $n = 3$ and $\lm = (2,1,0)$. Then,
$$
\Gamma(\lm) = (\eps_2-\eps_3,\eps_1-\eps_3,\eps_2-\eps_3,\eps_1-\eps_3,\eps_1-\eps_2,\eps_1-\eps_3,\eps_1-\eps_2).
$$
\end{ex}

\begin{lem}\label{Gamma(lm) is reduced}
$\Gamma(\lm)$ is the sequence of positive roots of a reduced alcove path from $A_\circ$ to $w_\circ A_\circ - \lm$.
\end{lem}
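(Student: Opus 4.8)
The plan is to check that $\Gamma(\lm)$ satisfies the two conditions of Proposition~\ref{Characterization of extended chain}; the assertion of the lemma is then immediate from that proposition (in particular the endpoint $w_\circ A_\circ - \lm$ comes for free). Write $m_i := \lm_i - \lm_{i+1} \ge 0$ for $1 \le i \le n-1$. Two structural facts drive everything. First, the block $\Gamma(i)$ contains each positive root $\eps_a-\eps_b$ (with $a<b$) exactly once when $a \le i < b$, and not at all otherwise; this is read off from the definition of $\Gamma(i)$, whose rows are indexed by the first coordinate running from $i$ down to $1$, each row listing the second coordinates from $n$ down to $i+1$. Second, the tail $(\eps_i-\eps_n,\eps_i-\eps_{n-1},\ldots,\eps_i-\eps_{i+1})$ inserted after $\Gamma(i)^{m_i}$ consists of exactly the positive roots with first coordinate $i$, in decreasing order of the second coordinate; and $\Gamma(\lm)$ is precisely the concatenation, over $i = n-1, n-2, \ldots, 1$, of $\Gamma(i)^{m_i}$ followed by that tail.

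For the first condition, fix $\eps_a-\eps_b \in \Phi^+$. By the two facts, the number of occurrences of $\eps_a-\eps_b$ in $\Gamma(\lm)$ equals $\sum_{i=a}^{b-1} m_i$ (contributions from the copies of the blocks $\Gamma(i)$ with $a \le i < b$) plus $1$ (the unique occurrence, in the tail after $\Gamma(a)^{m_a}$), and the sum telescopes to $\lm_a - \lm_b = (\lm,(\eps_a-\eps_b)^\vee)$. Hence the multiplicity is $(\lm,(\eps_a-\eps_b)^\vee)+1$, as required.

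For the second condition, note that in type $A_{n-1}$ the relation $\gamma^\vee = \alpha^\vee+\beta^\vee$ forces, after possibly swapping $\alpha$ and $\beta$, that $\alpha = \eps_a-\eps_c$, $\beta = \eps_c-\eps_b$, $\gamma = \eps_a-\eps_b$ for some $a<c<b$. I would first determine, inside a single copy of $\Gamma(i)$ and inside each tail, which of $\alpha,\beta,\gamma$ occur and in what order. Reading off the rows: $\Gamma(i)$ contributes the ordered pair $(\beta,\gamma)$ if $c \le i < b$, the ordered pair $(\gamma,\alpha)$ if $a \le i < c$, and nothing otherwise; the tail after $\Gamma(c)^{m_c}$ contributes the single term $\beta$, the tail after $\Gamma(a)^{m_a}$ contributes the ordered pair $(\gamma,\alpha)$, and all other tails contribute nothing. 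Concatenating these contributions in the global order $i = b-1, b-2, \ldots, a$ yields the word
$$
(\beta\gamma)^{P}\,\beta\,(\gamma\alpha)^{Q}\,\gamma\alpha, \qquad P := \sum_{i=c}^{b-1} m_i = \lm_c-\lm_b, \qquad Q := \sum_{i=a}^{c-1} m_i = \lm_a-\lm_c.
$$
It remains to observe that in this word every entry in an odd position lies in $\{\alpha,\beta\}$ and every entry in an even position equals $\gamma$, which is exactly the required alternation (its length $2P+2Q+3$ also agrees with $2(\lm,\gamma^\vee)+3$ from Remark~\ref{Remark on betaik 2}).

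The main obstacle is the bookkeeping in the last step: correctly extracting the relative order of $\alpha$, $\beta$, $\gamma$ within each $\Gamma(i)$ and within each tail, splitting the relevant indices $i$ into the two intervals $[c,b-1]$ and $[a,c-1]$, and tracking the two special tails at $i=c$ and $i=a$. There is no conceptual difficulty; one just needs care with the index ranges and with the degenerate cases $P=0$ or $Q=0$, which are handled by the same verification.
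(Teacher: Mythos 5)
Your proof is correct and follows essentially the same route as the paper: both verify the two conditions of Proposition~\ref{Characterization of extended chain}, with the key step being the explicit computation that for each triple $a<c<b$ the subsequence of $\Gamma(\lm)$ on $\{\eps_a-\eps_c,\eps_c-\eps_b,\eps_a-\eps_b\}$ is $(\beta\gamma)^{\lm_c-\lm_b}\,\beta\,(\gamma\alpha)^{\lm_a-\lm_c+1}$, which is exactly the word the paper asserts (the paper leaves this verification, and the resulting multiplicity count, as ``straightforwardly verified,'' while you spell it out).
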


\begin{proof}
Let us write $\Gamma(\lm) = (\beta_1,\ldots,\beta_u)$. It is straightforwardly verified that for each $i < j < k$, the subsequence of $\Gamma(\lm)$ consisting of $\eps_i-\eps_j,\eps_j-\eps_k,\eps_i-\eps_k$ is
$$
(\eps_j-\eps_k,\eps_i-\eps_k)^{\lm_j-\lm_k} \circ (\eps_j-\eps_k) \circ (\eps_i-\eps_k,\eps_i-\eps_j)^{\lm_i-\lm_j+1}.
$$
Then, the lemma follows from Proposition \ref{Characterization of extended chain}.
\end{proof}

Let $\Pi(\lm)$ denote the reduced alcove path from $A_\circ$ to $w_\circ A_\circ - \lm$ such that $\Gamma(\Pi(\lm)) = \Gamma(\lm)$.

\begin{ex}[{cf. Examples \ref{Example 232}, \ref{Example 415}, and \ref{Example 431}}]\normalfont
Suppose that $n = 3$ and $\lm = (2,1,0)$. Then, the elements of $\clA(\Pi(\lm))$ are
$$
\{ 1,2,5 \}, \{ 1,2,7 \}, \{ 1,4,5 \}, \{ 1,4,7 \}, \{ 1,6,7 \}, \{ 3,4,5 \}, \{ 3,4,7 \}, \{ 3,6,7 \}.
$$
These are all almost $\bfi_A$-decreasing subsets. The $N(J)=(N_{2,3}(J),N_{1,3}(J),N_{1,2}(J))$'s, $J \in \clA(\Pi(\lm))$ are
$$
(1,2,1), (1,2,0), (1,1,1), (1,1,0), (1,0,0), (0,1,1), (0,1,0), (0,0,0).
$$
\end{ex}

\begin{prop}\label{J(a)}
Let $\bfa = (a_{i,j})_{1 \leq i \leq j \leq n} \in \GT(\lm)$. Then, there exists a unique $J(\bfa) \in \clA(\Pi(\lm))$ such that $J(\bfa)$ is an almost $\bfi_A$-decreasing subset, and $N_{i,j}(J(\bfa)) = \lm_i-a_{i,j}$ for all $1 \leq i < j \leq n$.
\end{prop}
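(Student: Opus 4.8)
The plan is to write down $J(\bfa)$ explicitly, reduce its admissibility to a combinatorial condition by a general observation, and then verify that condition from the block structure of $\Gamma(\lm)$ together with the Gelfand--Tsetlin inequalities; uniqueness will be automatic. Indeed, if $J \in \clA(\Pi(\lm))$ is an almost $\bfi_A$-decreasing subset then $\{ \beta_j \mid j \in J \} = \Phi^+$, so for each $1 \leq a < b \leq n$ the set $J$ contains exactly one of the $\lm_a - \lm_b + 1$ occurrences of $\eps_a - \eps_b$ in $\Gamma(\lm)$; the $N$-value of the $t$-th occurrence from the left is $\lm_a - \lm_b + 1 - t$, so the requirement $N_{a,b}(J) = \lm_a - a_{a,b}$ forces $J$ to contain precisely the $(a_{a,b} - \lm_b + 1)$-st occurrence of $\eps_a - \eps_b$ (a legitimate index since $\lm_b \leq a_{a,b} \leq \lm_a$). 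This proves uniqueness and dictates the definition of $J(\bfa)$; with this definition the identity $N_{a,b}(J(\bfa)) = \lm_a - a_{a,b}$ is automatic, and it remains to prove that $J(\bfa)$ is an admissible almost $\bfi_A$-decreasing subset.

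For admissibility I would first establish a general fact: for any $\Pi = (A_0,\ldots,A_u) \in \widetilde{\AP}(\lm)$, a subset $J = \{ j_1 < \cdots < j_N \}$ of $\{1,\ldots,u\}$ with $\{ \beta_{j_1},\ldots,\beta_{j_N} \} = \Phi^+$ (each root occurring once) whose root sequence obeys the second condition in the definition of an almost $\bfi_A$-decreasing subset necessarily lies in $\clA(\Pi)$. The key point is that in type $A$ any $\alpha,\beta,\gamma \in \Phi^+$ with $\alpha + \beta = \gamma$ are pairwise non-orthogonal; hence the total order ``$\beta_{j_1} < \cdots < \beta_{j_N}$'' agrees with the reverse of $\leq_{\bfi_A}$ on every pair that can appear in the reflection-order axiom, and since the reverse of a reflection order is again a reflection order, this total order is itself a reflection order. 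Letting $\bfi' = (i'_1,\ldots,i'_N)$ be the corresponding reduced word of $w_\circ$ and $\delta_m := s_{i'_1}\cdots s_{i'_{m-1}}(\alpha_{i'_m})$ the associated positive roots, so that $\beta_{j_m} = \delta_m$ for all $m$, the telescoping identity $s_{\delta_1}s_{\delta_2}\cdots s_{\delta_k} = s_{i'_k}s_{i'_{k-1}}\cdots s_{i'_1}$ (a short induction using $s_{\delta_m} = (s_{i'_1}\cdots s_{i'_{m-1}})s_{i'_m}(s_{i'_1}\cdots s_{i'_{m-1}})^{-1}$) shows that $\ell(s_{\beta_{j_1}}\cdots s_{\beta_{j_k}}) = k$ for all $k$ and that $s_{\beta_{j_1}}\cdots s_{\beta_{j_N}} = (s_{i'_1}\cdots s_{i'_N})^{-1} = w_\circ$; thus $J$ defines a saturated chain from $e$ to $w_\circ$, i.e.\ $J \in \clA(\Pi)$.

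Granting this, what remains is to check that $J(\bfa)$ is almost $\bfi_A$-decreasing. The first condition (that the roots exhaust $\Phi^+$) holds by construction, so I must show that for non-orthogonal $\eps_a-\eps_b <_{\bfi_A} \eps_c-\eps_d$ the chosen occurrence of $\eps_c-\eps_d$ precedes the chosen occurrence of $\eps_a-\eps_b$ in $\Gamma(\lm)$. Writing $\Gamma(\lm) = B_{n-1} \circ \cdots \circ B_1$ with $B_i = \Gamma(i)^{\lm_i-\lm_{i+1}} \circ (\eps_i-\eps_n,\ldots,\eps_i-\eps_{i+1})$, and noting that each copy of $\Gamma(i)$ is the concatenation of the rows $(\eps_k-\eps_n,\ldots,\eps_k-\eps_{i+1})$ for $k = i, i-1, \ldots, 1$, a direct count places the $(a_{a,b}-\lm_b+1)$-st occurrence of $\eps_a-\eps_b$ in a block index, copy number, row, and slot that vary weakly monotonically with $a_{a,b}$, with $a$, and with $b$ respectively. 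Since a larger block index, a larger copy number, a larger row index (i.e.\ larger first index), and a smaller slot index (i.e.\ larger second index) all mean ``earlier in $\Gamma(\lm)$'', non-orthogonality (which forces the two roots to share an index, leaving the cases $a=c$ with $b<d$, $b=d$ with $a<c$, and $b=c$ with $b<d$) combined with the relevant Gelfand--Tsetlin inequality (such as $a_{a,d} \leq a_{a,b}$ or $a_{c,b} \leq a_{a,b}$) gives the desired ordering in each case. This case analysis --- in particular the handling of equal parts of $\lm$ and of the trailing segments $(\eps_i-\eps_n,\ldots,\eps_i-\eps_{i+1})$ of the blocks $B_i$ --- is the main obstacle: it is completely elementary but bookkeeping-heavy.
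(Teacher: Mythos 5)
Your uniqueness argument and the construction of $J(\bfa)$ (choose, for each $\eps_a-\eps_b$, the occurrence in $\Gamma(\lm)$ with exactly $\lm_a-a_{a,b}$ later occurrences of the same root) coincide with the paper's. Your intermediate lemma is also correct, and it makes explicit what the paper uses in one breath: in type $A$ any triple $\alpha,\beta,\alpha+\beta$ of positive roots is pairwise non-orthogonal, so the almost $\bfi_A$-decreasing condition forces the order of appearance along $J$ to be the reverse of $<_{\bfi_A}$ on all such pairs; hence that order is a reflection order, and the telescoping identity $s_{\delta_1}\cdots s_{\delta_k}=s_{i'_k}\cdots s_{i'_1}$ gives $\ell(s_{\beta_{j_1}}\cdots s_{\beta_{j_k}})=k$ and $s_{\beta_{j_1}}\cdots s_{\beta_{j_N}}=w_\circ$, i.e.\ admissibility. (The paper argues in the reverse direction: it verifies directly that the chosen roots appear in a reflection order and deduces admissibility, then gets the almost-decreasing property by a short case check.)

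The genuine gap is that the substantive content of the proposition is exactly the step you leave unexecuted: showing that for every non-orthogonal pair the chosen occurrence of the $<_{\bfi_A}$-larger root precedes that of the smaller one in $\Gamma(\lm)$. Your block/copy/row/slot plan is only asserted, and as stated it is not yet checkable: the monotonicity claims depend on conventions you never fix (how copies are numbered, the blocks $\Gamma(i)^{\lm_i-\lm_{i+1}}$ being empty when $\lm_i=\lm_{i+1}$, the chosen occurrence lying in a trailing segment when $a_{a,b}=\lm_a$), and the Gelfand--Tsetlin input differs by case (one needs $a_{i,k}\le a_{i,j}$ when the first indices agree, and the chained inequality $a_{j,k}\le\cdots\le a_{i,k}$ when the last indices agree), none of which is actually carried out. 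The paper closes this step with far less bookkeeping by working one triple at a time: by the computation already recorded in the proof of Lemma \ref{Gamma(lm) is reduced}, the subsequence of $\Gamma(\lm)$ consisting of $\eps_i-\eps_j,\eps_j-\eps_k,\eps_i-\eps_k$ is $(\eps_j-\eps_k,\eps_i-\eps_k)^{\lm_j-\lm_k}\circ(\eps_j-\eps_k)\circ(\eps_i-\eps_k,\eps_i-\eps_j)^{\lm_i-\lm_j+1}$, and counting occurrences from the right, the inequalities $a_{j,k}\le a_{i,k}\le a_{i,j}$ show that the chosen occurrences appear in the order $\eps_j-\eps_k$, then $\eps_i-\eps_k$, then $\eps_i-\eps_j$. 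Since every non-orthogonal pair sits inside such a triple, this local analysis is precisely the relative-position statement your argument still needs; replacing your global coordinate count by it would complete your proof.
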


\begin{proof}
Let us write $\Gamma(\lm) = (\beta_1,\ldots,\beta_u)$. Let $1 \leq k < l \leq n$. By the definition of Gelfand-Tsetlin patterns, we have
$$
\lm_l = a_{l,l} \leq a_{l-1,l} \leq \cdots \leq a_{k+1,l} \leq a_{k,l} \leq a_{k,l-1} \leq \cdots \leq a_{k,k+1} \leq a_{k,k} = \lm_k.
$$
Hence, it follows that
$$
0 \leq \lm_k - a_{k,l} \leq \lm_k-\lm_l = (\lm,\eps_k-\eps_l).
$$
Since the number of occurrences of $\eps_k-\eps_l$ in $\Gamma(\lm)$ is equal to $(\lm,\eps_k-\eps_l)+1$, there exists a unique $i_{k,l} \in \{ 1,\ldots,u \}$ such that $\beta_{i_{k,l}} = \eps_k-\eps_l$ and $N(i_{k,l}) = \lm_k-a_{k,l}$.

Let $J(\bfa) := \{ j_1,\ldots,j_N \}$ with $j_1 < \cdots < j_N$ be the rearrangement of $\{ i_{k,l} \mid 1 \leq k < l \leq n \}$. Clearly, we have $\{ \beta_j \mid j \in J(\bfa) \} = \Phi^+$. We show that the total order on $\Phi^+$ defined by $\beta_{j_1} < \cdots < \beta_{j_N}$ is a reflection order. Let $1 \leq a,b,c \leq N$ be such that $\beta_{j_c} = \beta_{j_a} + \beta_{j_b}$. Exchanging $a$ and $b$ if necessary, we can write $\beta_{j_a} = \eps_i-\eps_j$, $\beta_{j_b} = \eps_j-\eps_k$, $\beta_{j_c} = \eps_i-\eps_k$ for some $1 \leq i < j < k \leq n$. As we have seen in the proof of Lemma \ref{Gamma(lm) is reduced}, the subsequence of $\Gamma(\lm)$ consisting of $\eps_i-\eps_j,\eps_j-\eps_k,\eps_i-\eps_k$ is
$$
(\eps_j-\eps_k,\eps_i-\eps_k)^{\lm_j-\lm_k} \circ (\eps_j-\eps_k) \circ (\eps_i-\eps_k,\eps_i-\eps_j)^{\lm_i-\lm_j+1}.
$$
This shows that the $N_{i,j}+1$-th $\eps_i-\eps_j$ from the right is on the right of the $N_{i,k}+1$-th $\eps_i-\eps_k$ from the right since we have
$$
N_{i,k} = \lm_i-a_{i,k} > \lm_i-a_{i,j} = N_{i,j}.
$$
This implies that $c < a$. Similarly, we see that $b < c$. Thus, we obtain
$$
b < c < a.
$$
Therefore, $\beta_{j_1} < \cdots < \beta_{j_N}$ is a reflection order. In particular, $J(\bfa) \in \clA(\Pi(\lm))$.

Finally, we prove that $J(\bfa)$ is an almost $\bfi_A$-decreasing subset. Let $1 \leq a < b \leq N$ be such that $(\beta_{j_a},\beta_{j_b}) \neq 0$. Then, there exist $1 \leq i < j < k \leq n$ such that one of the following holds:
\begin{itemize}
\item $\beta_{j_a} = \eps_i-\eps_j$ and $\beta_{j_b} = \eps_i-\eps_k$.
\item $\beta_{j_a} = \eps_i-\eps_j$ and $\beta_{j_b} = \eps_j-\eps_k$.
\item $\beta_{j_a} = \eps_i-\eps_k$ and $\beta_{j_b} = \eps_i-\eps_j$.
\item $\beta_{j_a} = \eps_i-\eps_k$ and $\beta_{j_b} = \eps_j-\eps_k$.
\item $\beta_{j_a} = \eps_j-\eps_k$ and $\beta_{j_b} = \eps_i-\eps_j$.
\item $\beta_{j_a} = \eps_j-\eps_k$ and $\beta_{j_b} = \eps_i-\eps_k$.
\end{itemize}
However, by argument above, only the third, fifth, or sixth can happen. In each case, we obtain $\beta_{j_b} <_{\bfi_A} \beta_{j_a}$. This proves that $J(\bfa)$ is an almost $\bfi_A$-decreasing subset.
\end{proof}

\begin{cor}
Each $J \in \clA(\Pi(\lm))$ is an almost $\bfi_A$-decreasing subset, and the map $\clA(\Pi(\lm)) \rightarrow \GT(\lm);\ J \mapsto \bfa(J)$ is bijective.
\end{cor}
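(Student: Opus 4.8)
The plan is a short counting argument that leverages Proposition \ref{J(a)} as essentially the only nontrivial input. First I would record that the assignment $\bfa \mapsto J(\bfa)$ of Proposition \ref{J(a)} is a well-defined map $\GT(\lm) \to \clA(\Pi(\lm))$, that its image consists of almost $\bfi_A$-decreasing subsets, and that it is injective: if $J(\bfa) = J(\bfa')$, then for all $1 \leq i < j \leq n$ we get $\lm_i - a_{i,j} = N_{i,j}(J(\bfa)) = N_{i,j}(J(\bfa')) = \lm_i - a'_{i,j}$, while $a_{i,i} = \lm_i = a'_{i,i}$; hence $\bfa = \bfa'$. This yields
$$
|\GT(\lm)| \;\leq\; \sharp\{ J \in \clA(\Pi(\lm)) \mid J \text{ is almost } \bfi_A\text{-decreasing} \} \;\leq\; |\clA(\Pi(\lm))|.
$$

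Next I would pin down the outer terms. By Lemma \ref{Gamma(lm) is reduced} we have $\Pi(\lm) \in \widetilde{\AP}(\lm)$, so by Corollary \ref{extended alcove path is B(lm)} the set $\clA(\Pi(\lm))$ is a finite crystal isomorphic to $\clB(\lm)$; thus $|\clA(\Pi(\lm))| = |\clB(\lm)|$. On the other hand, the bijection $\GT(\lm) \leftrightarrow \SST(\lm)$ recalled in Subsection \ref{GT patterns}, together with the isomorphism $\SST(\lm) \simeq \clB(\lm)$ of crystals, gives $|\GT(\lm)| = |\clB(\lm)|$. Therefore $|\GT(\lm)| = |\clA(\Pi(\lm))|$, all three quantities in the displayed chain coincide, and both inequalities are equalities. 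In particular every $J \in \clA(\Pi(\lm))$ is an almost $\bfi_A$-decreasing subset, and $\bfa \mapsto J(\bfa)$ is a bijection from $\GT(\lm)$ onto $\clA(\Pi(\lm))$.

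Finally I would identify $J \mapsto \bfa(J)$ as the inverse of $\bfa \mapsto J(\bfa)$. Since all elements of $\clA(\Pi(\lm))$ are now known to be almost $\bfi_A$-decreasing, the tuple $N(J) = (N_{i,j}(J))_{1 \leq i < j \leq n}$ is defined for every $J \in \clA(\Pi(\lm))$, hence so is $\bfa(J) = (\lm_i - N_{i,j}(J))_{1 \leq i \leq j \leq n}$ (with $N_{i,i}(J) := 0$). For $\bfa \in \GT(\lm)$, Proposition \ref{J(a)} gives $a_{i,j}(J(\bfa)) = \lm_i - N_{i,j}(J(\bfa)) = \lm_i - (\lm_i - a_{i,j}) = a_{i,j}$ for $i < j$ and $a_{i,i}(J(\bfa)) = \lm_i = a_{i,i}$, so $\bfa(J(\bfa)) = \bfa$. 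Thus $J \mapsto \bfa(J)$ is a left inverse of the bijection $\bfa \mapsto J(\bfa)$, hence is itself the inverse bijection $\clA(\Pi(\lm)) \to \GT(\lm)$; in particular $\bfa(J) \in \GT(\lm)$ for every $J$, which was not obvious a priori.

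I do not expect a genuine obstacle: all the combinatorial content — constructing $J(\bfa)$, verifying it lies in $\clA(\Pi(\lm))$, checking it is almost $\bfi_A$-decreasing, and computing its root multiplicities — has already been carried out in Proposition \ref{J(a)}, while Corollary \ref{extended alcove path is B(lm)} supplies the crystal isomorphism needed for the count. The only point meriting care is ensuring that the cardinalities being compared are finite and all equal to $|\clB(\lm)|$, which is immediate from the cited results, and that there is no circularity: Proposition \ref{J(a)} and Corollary \ref{extended alcove path is B(lm)} are established independently of this corollary.
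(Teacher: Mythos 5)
Your proposal is correct and follows essentially the same route as the paper: Proposition \ref{J(a)} gives an injection $\GT(\lm) \rightarrow \clA(\Pi(\lm))$, the cardinality count $|\GT(\lm)| = |\clB(\lm)| = |\clA(\Pi(\lm))|$ (via Corollary \ref{extended alcove path is B(lm)}) upgrades it to a bijection whose image consists of almost $\bfi_A$-decreasing subsets, and $J \mapsto \bfa(J)$ is identified as the inverse. Your sandwich of cardinalities merely makes explicit what the paper leaves implicit, so there is no substantive difference.
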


\begin{proof}
By Proposition \ref{J(a)}, we obtain an injection $\GT(\lm) \rightarrow \clA(\Pi(\lm));\ \bfa \mapsto J(\bfa)$. This is indeed a bijection since we have $|\GT(\lm)| = |\clB(\lm)| = |\clA(\Pi(\lm))|$. It is easily verified that the inverse of this bijection is given by $J \mapsto \bfa(J)$. This proves the corollary.
\end{proof}

\begin{cor}\label{aij and nij}
Let $J \in \clA(\Pi(\lm))$. Then, for each $1 \leq i < j \leq n$, we have
$$
\lm_i-\lm_{i+1} + N_{i+1,j} \geq N_{i,j} \geq N_{i,j-1}.
$$
\end{cor}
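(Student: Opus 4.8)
\section*{Proof proposal for the final corollary}

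The plan is to obtain the statement as an immediate consequence of the preceding corollary, which asserts that for $J \in \clA(\Pi(\lm))$ the set $J$ is an almost $\bfi_A$-decreasing subset (so all the quantities $N_{i,j}(J)$ are well defined) and that the tuple $\bfa(J) = (a_{i,j}(J))_{1 \leq i \leq j \leq n}$, with $a_{i,j}(J) = \lm_i - N_{i,j}(J)$ and the convention $N_{i,i}(J) := 0$, lies in $\GT(\lm)$. Thus I would begin the proof by fixing $J \in \clA(\Pi(\lm))$ and invoking that corollary to conclude $\bfa(J) \in \GT(\lm)$.

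Next I would simply write out the defining inequalities of the Gelfand-Tsetlin pattern $\bfa(J)$: for every $1 \leq i < j \leq n$ one has
$$
a_{i+1,j}(J) \leq a_{i,j}(J) \leq a_{i,j-1}(J),
$$
where in the boundary case $j = i+1$ one reads $a_{i,j-1}(J) = a_{i,i}(J) = \lm_i$ and $a_{i+1,j}(J) = a_{j,j}(J) = \lm_j$, which is exactly what the substitution below produces upon setting $N_{i,i}(J) := 0$. Substituting $a_{k,l}(J) = \lm_k - N_{k,l}(J)$ throughout converts this into
$$
\lm_{i+1} - N_{i+1,j}(J) \leq \lm_i - N_{i,j}(J) \leq \lm_i - N_{i,j-1}(J).
$$

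Finally I would rearrange: the left inequality is equivalent to $N_{i,j}(J) \leq \lm_i - \lm_{i+1} + N_{i+1,j}(J)$, and the right inequality is equivalent to $N_{i,j-1}(J) \leq N_{i,j}(J)$; combining the two yields $\lm_i - \lm_{i+1} + N_{i+1,j}(J) \geq N_{i,j}(J) \geq N_{i,j-1}(J)$, as claimed. There is no genuine obstacle here — the only point requiring a moment's care is the bookkeeping for the boundary indices $j = i+1$, which the convention $N_{i,i}(J) := 0$ is precisely designed to absorb. A more hands-on alternative would be to read the inequalities off directly from the explicit shape of $\Gamma(\lm)$ via Proposition \ref{Properties of almost decreasing subset}, but routing through $\GT(\lm)$ is cleaner and needs no new computation.
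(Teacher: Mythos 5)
Your proposal is correct and follows essentially the same route as the paper: the paper's proof simply takes the Gelfand--Tsetlin pattern $\bfa$ with $\lm_i - a_{i,j} = N_{i,j}$ (available by the preceding corollary) and observes that the claimed inequalities are exactly the defining conditions $a_{i+1,j} \leq a_{i,j} \leq a_{i,j-1}$ rewritten in terms of the $N_{i,j}$, including the boundary convention $N_{i,i}=0$ that you also note.
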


\begin{proof}
Let $\bfa \in \GT(\lm)$ be such that $\lm_i-a_{i,j} = N_{i,j}$. Then, the assertion is equivalent to the defining condition for $\bfa$ that
$$
a_{i+1,j} \leq a_{i,j} \leq a_{i,j-1}.
$$
\end{proof}

\subsection{Proof of Theorem \ref{Main Theorem} \eqref{Main Theorem 3} for $\Pi(\lm)$}
Let $J = \{ j_1,\ldots, j_N \} = \{ j_{a,b} \mid 1 \leq a < b \leq n \} \in \clA(\Pi(\lm))$.

\begin{lem}\label{vep(J)}
Let $p \in \{ 1,\ldots,n-1 \}$. Suppose that $N_{c,d} = N_{c,c+n-p}$ for all $1 \leq c < d \leq n$ satisfying $d-c \geq n-p$. Then, we have
$$
\vep_p(J) = (-\wt(J),\alpha_p) - l^J_{p,n} = \sum_{a=1}^p (N_{a,a+n-p} - N_{a,a+n-p-1}),
$$
and
$$
N_{c,d}(\Etil_p^{\max}(J)) = \begin{cases}
N_{c,d} \qu & \IF d-c \neq n-p, \\
N_{c,d-1} \qu & \IF d-c = n-p.
\end{cases}
$$
for all $1 \leq c < d \leq n$.
\end{lem}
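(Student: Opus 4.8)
\emph{Step 1 (determining $M(J,p)$ and the formula for $\vep_p(J)$).} Set $L_q:=l^J_{q,n-p+q}$ for $1\le q\le p$. By Proposition~\ref{Properties of almost decreasing subset}\,\eqref{Properties of almost decreasing subset 3} (together with the extended-model analogue of Proposition~\ref{Properties of alcove paths model}\,\eqref{Properties of alcove paths model 3}), $M(J,p)=\min\{L_1,\dots,L_p,(-\wt(J),\alpha_p)\}$. Substituting the hypothesis $N_{q,n-p+q+1}=N_{q,n-p+q}$ (valid since $(n-p+q+1)-q\ge n-p$) into Proposition~\ref{Properties of almost decreasing subset}\,\eqref{Properties of almost decreasing subset 5} gives $L_q-L_{q+1}=N_{q+1,n-p+q+1}-N_{q+1,n-p+q}\ge 0$ by Corollary~\ref{aij and nij}, and Proposition~\ref{Properties of almost decreasing subset}\,\eqref{Properties of almost decreasing subset 7} together with Corollary~\ref{aij and nij} gives $(-\wt(J),\alpha_p)-L_1=N_{1,n-p+1}-N_{1,n-p}\ge 0$. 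Hence $(-\wt(J),\alpha_p)\ge L_1\ge\cdots\ge L_p$, so $M(J,p)=L_p=l^J_{p,n}$. Then $\vep_p(J)=(-\wt(J),\alpha_p)-l^J_{p,n}$ by Corollary~\ref{Properties of extended alcove paths model}\,\eqref{Properties of extended alcove paths model 5}; writing this as $\bigl[(-\wt(J),\alpha_p)-L_1\bigr]+\sum_{q=1}^{p-1}\bigl[L_q-L_{q+1}\bigr]$ and inserting the two identities above, a reindexing yields $\sum_{a=1}^p(N_{a,a+n-p}-N_{a,a+n-p-1})$.

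\emph{Step 2 (the chain $\Etil_p^k(J)$).} Put $J^{(k)}:=\Etil_p^k(J)$ for $0\le k\le\vep:=\vep_p(J)$; each $J^{(k)}$ lies in $\clA(\Pi(\lm))$, hence is an almost $\bfi_A$-decreasing subset, so Propositions~\ref{Properties of almost decreasing subset} and~\ref{Properties of Ep(J)} apply to it. Since $j_{1,n-p+1}>\cdots>j_{p,n}$ in any such subset (Proposition~\ref{Properties of almost decreasing subset}\,\eqref{Properties of almost decreasing subset 1}), Proposition~\ref{Properties of Ep(J)}\,\eqref{Properties of Ep(J) 1} shows that passing from $J^{(k)}$ to $J^{(k+1)}$ removes $j_{q_0,n-p+q_0}$, where $q_0=q_0(k)$ is the \emph{smallest} index with $l^{J^{(k)}}_{q_0,n-p+q_0}=M(J^{(k)},p)$; by Proposition~\ref{Properties of Ep(J)}\,\eqref{Properties of Ep(J) 5} this lowers $N_{q_0,n-p+q_0}$ by $1$ and fixes every other $N_{c,d}$. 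Therefore $N_{c,d}(\Etil_p^{\max}(J))=N_{c,d}(J)$ whenever $d-c\ne n-p$, and, summing over the $p$ roots with $d-c=n-p$, $\sum_{a=1}^p N_{a,n-p+a}(\Etil_p^{\max}(J))=\sum_{a=1}^p N_{a,n-p+a}(J)-\vep=\sum_{a=1}^p N_{a,n-p+a-1}(J)$ by Step~1. So it is enough to establish the pointwise bound $N_{q,n-p+q}(\Etil_p^{\max}(J))\ge N_{q,n-p+q-1}(J)$ for all $q$, since the displayed sum identity then forces all these inequalities to be equalities.

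\emph{Step 3 (the pointwise bound).} I would show $N_{q,n-p+q}(J^{(k)})\ge N_{q,n-p+q-1}(J)$ for all $q$ and all $0\le k\le\vep$ by induction on $k$, the case $k=0$ being Corollary~\ref{aij and nij}. As only $N_{q_0,n-p+q_0}$ moves in passing from $J^{(k)}$ to $J^{(k+1)}$, it suffices to prove $N_{q_0,n-p+q_0}(J^{(k)})>N_{q_0,n-p+q_0-1}(J)$ when $k<\vep$. If $q_0=1$: Proposition~\ref{Properties of almost decreasing subset}\,\eqref{Properties of almost decreasing subset 7} for $J^{(k)}$ reads $(-\wt(J^{(k)}),\alpha_p)-l^{J^{(k)}}_{1,n-p+1}=N_{1,n-p+1}(J^{(k)})-N_{1,n-p}(J^{(k)})$, whose left side is positive because $l^{J^{(k)}}_{1,n-p+1}=M(J^{(k)},p)<(-\wt(J^{(k)}),\alpha_p)$ (as $\vep_p(J^{(k)})>0$), while $N_{1,n-p}(J^{(k)})=N_{1,n-p}(J)$ since the index-difference is $\ne n-p$. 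If $q_0\ge 2$: Proposition~\ref{Properties of almost decreasing subset}\,\eqref{Properties of almost decreasing subset 5} for $J^{(k)}$ at index $q_0-1$, combined with $l^{J^{(k)}}_{q_0-1,n-p+q_0-1}>l^{J^{(k)}}_{q_0,n-p+q_0}=M(J^{(k)},p)$ (minimality of $q_0$), with $N_{q_0-1,n-p+q_0}(J^{(k)})=N_{q_0-1,n-p+q_0}(J)=N_{q_0-1,n-p+q_0-1}(J)$ (the first because the index-difference is $\ne n-p$, the second by the hypothesis on $J$), with $N_{q_0-1,n-p+q_0-1}(J^{(k)})\le N_{q_0-1,n-p+q_0-1}(J)$ (it only decreases along the chain), and with $N_{q_0,n-p+q_0-1}(J^{(k)})=N_{q_0,n-p+q_0-1}(J)$, forces $N_{q_0,n-p+q_0}(J^{(k)})-N_{q_0,n-p+q_0-1}(J)\ge 1$. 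This closes the induction and completes the proof. The crux is precisely this last step: one must track exactly which entry $\Etil_p$ lowers at each stage and recognise that the hypothesis on $J$ is just what makes the relevant $l$-values monotone and ties the off-diagonal entries $N_{q-1,n-p+q}$ to $N_{q-1,n-p+q-1}$ all along the chain.
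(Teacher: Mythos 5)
Your proof is correct and follows essentially the same route as the paper: your Step 1 (the monotone chain $l^J_{p,n}\le\cdots\le l^J_{1,n-p+1}\le(-\wt(J),\alpha_p)$ obtained from Proposition~\ref{Properties of almost decreasing subset} \eqref{Properties of almost decreasing subset 5}, \eqref{Properties of almost decreasing subset 7} and Corollary~\ref{aij and nij}, followed by the telescoping computation of $\vep_p(J)$) is exactly the paper's argument. Your Steps 2--3 are a correct, more detailed fleshing-out of the paper's one-sentence claim that the second assertion follows by iterating Proposition~\ref{Properties of Ep(J)} \eqref{Properties of Ep(J) 5}: tracking which diagonal entry is decremented at each stage and closing with the pointwise bound plus the sum identity is a valid way to justify that iteration.
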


\begin{proof}
By our assumption and Proposition \ref{Properties of almost decreasing subset} \eqref{Properties of almost decreasing subset 5}, \eqref{Properties of almost decreasing subset 7} and Corollary \ref{aij and nij}, we have
$$
l^J_{p-i-1,n-i-1} - l^J_{p-i,n-i} = N_{p-i,n-i} - N_{p-i,n-i-1} \geq 0
$$
for all $i = 0,\ldots,p-2$, and
$$
(-\wt(J),\alpha_p) - l^J_{1,n-p+1} = N_{1,n-p+1} - N_{1,n-p} \geq 0.
$$
Hence, it follows that
$$
l^J_{p,n} \leq l^J_{p-1,n-1} \leq \cdots \leq l^J_{1,n-p+1} \leq (-\wt(J),\alpha_p).
$$
Then, using Corollary \ref{Properties of extended alcove paths model} \eqref{Properties of extended alcove paths model 5} and equations above, we compute
$$
\vep_p(J) = (-\wt(J),\alpha_p^\vee) - l^J_{p,n} = \sum_{a=1}^p (N_{a,a+n-p} - N_{a,a+n-p-1}).
$$

The second assertion follows by applying Proposition \ref{Properties of Ep(J)} \eqref{Properties of Ep(J) 5} iteratively.
\end{proof}

\begin{prop}\label{String datum of J}
Let us write $\str_{\bfi_A}(J) = (d_{a,b}(J))_{1 \leq a < b \leq n}$. Then, we have
$$
d_{a,b}(J) = \sum_{m=1}^{b-a} (N_{m,m+n-b+1} - N_{m,m+n-b}).
$$
for all $1 \leq a < b \leq n$.
\end{prop}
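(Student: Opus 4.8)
The strategy is to evaluate $\str_{\bfi_A}(J)$ directly by marching through the reduced word $\bfi_A=(1;\,2,1;\,3,2,1;\,\dots;\,n-1,\dots,1)$ block by block, keeping track along the way of the whole tuple of root multiplicities of the successive elements $\Etil_{i_k}^{\max}\cdots\Etil_{i_1}^{\max}(J)$. Write $\bfi_A$ as blocks $B_2,\dots,B_n$ with $B_b=(b-1,b-2,\dots,1)$; the $a$-th letter of $B_b$ is $b-a$, and the positive root attached to that letter is $\eps_a-\eps_b$, so the $(a,b)$-entry of $\str_{\bfi_A}(J)$ equals $\vep_{b-a}$ evaluated at the element obtained from $J$ after performing blocks $B_2,\dots,B_{b-1}$ and then the first $a-1$ letters of $B_b$. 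Since every element of $\clA(\Pi(\lm))$ is an almost $\bfi_A$-decreasing subset (established just above), each of these intermediate elements again lies in $\clA(\Pi(\lm))$ (as it is a crystal) and is almost $\bfi_A$-decreasing, so Lemma~\ref{vep(J)} applies at every step, provided its hypothesis is checked.

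The heart of the proof is the following book-keeping claim. For $2\le b\le n$ and $0\le j\le b-1$, let $K^{(b)}_j$ be the element obtained from $J$ by performing blocks $B_2,\dots,B_{b-1}$ and then the first $j$ letters of $B_b$, so that $K^{(2)}_0=J$, $K^{(b)}_{b-1}=K^{(b+1)}_0$, and $K^{(n)}_{n-1}=b_\lm$. Then
$$
N_{c,c+\delta}\bigl(K^{(b)}_j\bigr)=
\begin{cases}
N_{c,c+\delta}(J) & \text{if } 1\le\delta\le n-b,\\
N_{c,c+n-b}(J) & \text{if } n-b+1\le\delta\le n-b+j,\\
N_{c,c+n-b+1}(J) & \text{if } n-b+j+1\le\delta\le n-1,
\end{cases}
$$
with the convention $N_{c,c}(J):=0$. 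In words: by Lemma~\ref{vep(J)} each $\Etil_p^{\max}$ overwrites the $(n-p)$-th diagonal of the $N$-tuple by the $(n-p-1)$-th one, so running through block $B_b$ successively collapses all diagonals $\ge n-b+1$ onto the value sitting on diagonal $n-b$. I would prove the claim by induction on $b$ with an inner induction on $j$: the base case $b=2$, $j=0$ is trivial, and $K^{(b)}_{b-1}=K^{(b+1)}_0$ is an immediate consistency check of the three-case formula. For the inner step $K^{(b)}_j=\Etil_{b-j}^{\max}(K^{(b)}_{j-1})$ one applies Lemma~\ref{vep(J)} with $p=b-j$: its hypothesis ``$N_{c,d}=N_{c,c+n-p}$ whenever $d-c\ge n-p$'' is exactly the assertion that $N_{c,c+\delta}(K^{(b)}_{j-1})$ is independent of $\delta$ for $\delta\ge n-b+j$, which is the third clause of the claim for $K^{(b)}_{j-1}$; the lemma then rewrites precisely the $(n-b+j)$-th diagonal, and comparing the three ranges termwise yields the claim for $K^{(b)}_j$, the only slightly delicate points being the diagonal $\delta=n-b$ (where $N_{c,c}=0$ intervenes) and the case $j=1$. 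This book-keeping claim, and within it the observation that the hypothesis of Lemma~\ref{vep(J)} is automatically met at each step because the already-processed diagonals have collapsed to a single value, is the one genuinely nontrivial part of the argument; everything else is mechanical substitution.

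Granting the claim, the proposition follows at once. We have $d_{a,b}(J)=\vep_{b-a}\bigl(K^{(b)}_{a-1}\bigr)$, and the hypothesis of Lemma~\ref{vep(J)} with $p=b-a$ at $K^{(b)}_{a-1}$ holds because $N_{c,c+\delta}(K^{(b)}_{a-1})$ is independent of $\delta$ for $\delta\ge n-b+a$ (third clause of the claim with $j=a-1$). Lemma~\ref{vep(J)} therefore gives
$$
d_{a,b}(J)=\sum_{m=1}^{b-a}\Bigl(N_{m,\,m+n-b+a}\bigl(K^{(b)}_{a-1}\bigr)-N_{m,\,m+n-b+a-1}\bigl(K^{(b)}_{a-1}\bigr)\Bigr).
$$
Now $(m,m+n-b+a)$ lies on the diagonal $n-b+a\ge n-b+(a-1)+1$, hence equals $N_{m,m+n-b+1}(J)$ by the third clause, while $(m,m+n-b+a-1)$ lies on the diagonal $n-b+a-1$, which for $a\ge2$ is the top of the middle range for $K^{(b)}_{a-1}$ and for $a=1$ lies in the first range (or is the zero diagonal), so in every case it equals $N_{m,m+n-b}(J)$. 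Substituting yields $d_{a,b}(J)=\sum_{m=1}^{b-a}\bigl(N_{m,m+n-b+1}(J)-N_{m,m+n-b}(J)\bigr)$, which is the asserted formula.
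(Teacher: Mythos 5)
Your proposal is correct and follows essentially the same route as the paper: your book-keeping claim for $K^{(b)}_j$ is exactly the paper's equation \eqref{Claim} for $J^{(k)}$ under the re-indexing $b=p+1$, $j=p-q$, proved by the same induction along $\bfi_A$ using Lemma \ref{vep(J)} at each step, and the final substitution into the formula for $\vep_{b-a}$ matches the paper's concluding computation.
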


\begin{proof}
Let us write $\bfi_A = (i_1,\ldots,i_N)$. For each $k \in \{ 1,\ldots,N \}$, define $J^{(k)} \in \clA(\Pi(\lm))$ by
$$
J^{(1)} := J, \qu J^{(k)} := \Etil_{i_{k-1}}^{\max}(J^{(k-1)}).
$$
For each $p \in \{ 1,\ldots,n-1 \}$, set $k_p := \min \{ k \mid i_k = p \}$. Then, each $k \in \{ 1,\ldots,N \}$ can be uniquely written as $k = k_p + p -q$ for some $p \in \{ 1,\ldots,n-1 \}$ and $q \in \{ 1,\ldots,p \}$. Note that $i_{k_p+p-q} = q$, and that $\vep_q(J^{(k_p+p-q)}) = d_{p-q+1,p+1}(J)$.

We show that
\begin{align}\label{Claim}
N_{c,d}(J^{(k)}) = \begin{cases}
N_{c,d} \qu & \IF d-c < n-p, \\
N_{c,c+n-p-1} \qu & \IF n-p \leq d-c < n-q, \\
N_{c,c+n-p} \qu & \IF d-c \geq n-q
\end{cases}
\end{align}
for all $1 \leq c < d \leq n$. If this is the case, then by Lemma \ref{vep(J)}, we obtain
\begin{align}
\begin{split}
d_{p-q+1,p+1}(J) = \vep_q(J^{(k_p+p-q)}) &= \sum_{a=1}^q(N_{a,a+n-q}(J^{(k_p+p-q)}) - N_{a,a+n-q-1}(J^{(k_p+p-q)})) \\
&= \sum_{a=1}^q(N_{a,a+n-p} - N_{a,a+n-p-1}),
\end{split} \nonumber
\end{align}
as desired.

Let us prove \eqref{Claim} by induction on $k = k_p + p -q$. When $k = 1 = k_1 + 1 - 1$, equation \eqref{Claim} clearly holds. Suppose that $k = k_p + p - q \geq 1$, and equation \eqref{Claim} holds. First, consider the case when $q > 1$. In this case, we have $k+1 = k_p + p - (q-1)$. By Lemma \ref{vep(J)}, we have
\begin{align}
\begin{split}
N_{c,d}(J^{(k+1)}) = N_{c,d}(\Etil_q^{\max}(J^{(k)})) &= \begin{cases}
N_{c,d}(J^{(k)}) \qu & \IF d-c \neq n-q, \\
N_{c,d-1}(J^{(k)}) \qu & \IF d-c = n-q
\end{cases} \\
&= \begin{cases}
N_{c,d} \qu & \IF d-c < n-p, \\
N_{c,c+n-p-1} \qu & \IF n-p \leq d-c < n-(q-1), \\
N_{c,c+n-p} \qu & \IF d-c \geq n-(q-1).
\end{cases}
\end{split} \nonumber
\end{align}
This implies equation \eqref{Claim} for $k+1$.

Next, consider the case when $q = 1$. In this case, we have $k+1 = k_{p+1} + (p+1) - (p+1)$. Again by Lemma \ref{vep(J)}, we have
\begin{align}
\begin{split}
N_{c,d}(J^{(k+1)}) = N_{c,d}(\Etil_1^{\max}(J^{(k)})) &= \begin{cases}
N_{c,d}(J^{(k)}) \qu & \IF d-c \neq n-1, \\
N_{c,d-1}(J^{(k)}) \qu & \IF d-c = n-1
\end{cases} \\
&= \begin{cases}
N_{c,d} \qu & \IF d-c < n-(p+1), \\
N_{c,c+n-(p+1)} \qu & \IF d-c \geq n-(p+1).
\end{cases}
\end{split} \nonumber
\end{align}
This implies equation \eqref{Claim} for $k+1$. Thus, the proof completes.
\end{proof}

\begin{cor}
The map $\clA(\Pi(\lm)) \rightarrow \GT(\lm);\ J \mapsto \bfa(J)$ is an isomorphism of crystals.
\end{cor}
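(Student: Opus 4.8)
The plan is to deduce the statement from Lemma~\ref{sufficient condition on isomorphisms}. By Corollary~\ref{extended alcove path is B(lm)}, $\clA(\Pi(\lm))$ is a crystal isomorphic to $\clB(\lm)$; by Subsection~\ref{GT patterns}, so is $\GT(\lm)$; and by the corollary just proved, $J \mapsto \bfa(J)$ is a bijection $\clA(\Pi(\lm)) \to \GT(\lm)$. Hence it suffices to verify that this bijection preserves the $\bfi_A$-string datum, i.e., that $\str_{\bfi_A}(\bfa(J)) = \str_{\bfi_A}(J)$ for every $J \in \clA(\Pi(\lm))$; then Lemma~\ref{sufficient condition on isomorphisms} immediately yields that $J \mapsto \bfa(J)$ is an isomorphism of crystals.

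To carry this out I would simply compare the two explicit string-datum formulas already at hand. Proposition~\ref{String datum of J} tells us that the $(i,j)$-entry of $\str_{\bfi_A}(J)$ is $d_{i,j}(J) = \sum_{m=1}^{j-i}(N_{m,m+n-j+1}(J) - N_{m,m+n-j}(J))$. On the other side, $\bfa(J)$ has entries $a_{i,j}(J) = \lm_i - N_{i,j}(J)$ (with $N_{i,i}(J) = 0$, matching $a_{i,i}(J) = \lm_i$), so Proposition~\ref{String datum of GT} applied to $\bfa(J)$ gives $d_{i,j}(\bfa(J)) = \sum_{m=1}^{j-i}(a_{m,m+n-j}(J) - a_{m,m+n-j+1}(J))$, in which the $\lm_m$-terms cancel pairwise, leaving exactly $\sum_{m=1}^{j-i}(N_{m,m+n-j+1}(J) - N_{m,m+n-j}(J)) = d_{i,j}(J)$. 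Thus $\str_{\bfi_A}(\bfa(J)) = \str_{\bfi_A}(J)$, as needed.

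I do not expect a genuine obstacle here: essentially all the work has been front-loaded into Proposition~\ref{String datum of J} (the computation of the string datum of almost $\bfi_A$-decreasing subsets in $\clA(\Pi(\lm))$, itself resting on Lemma~\ref{vep(J)} and the inductive claim~\eqref{Claim}) and into Proposition~\ref{String datum of GT}. The only points requiring care are the bookkeeping of the two index shifts $m+n-j$ versus $m+n-j+1$ and the degenerate diagonal case $j = n$, where one checks that the convention $N_{m,m}(J) = 0$ is compatible with the value $a_{m,m}(J) = \lm_m$ appearing in Proposition~\ref{String datum of GT}; both are immediate.
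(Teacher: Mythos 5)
Your proposal is correct and follows exactly the paper's argument: invoke Lemma \ref{sufficient condition on isomorphisms} and check $\str_{\bfi_A}(\bfa(J)) = \str_{\bfi_A}(J)$ by comparing Propositions \ref{String datum of GT} and \ref{String datum of J}, where the $\lm_m$-terms cancel. You merely spell out the cancellation and the diagonal convention that the paper leaves as ``easily verified.''
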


\begin{proof}
By Lemma \ref{sufficient condition on isomorphisms}, it suffices to show that $\str_{\bfi_A}(J) = \str_{\bfi_A}(\bfa(J))$ for all $J \in \clA(\Pi(\lm))$. One can easily verify this equality from Propositions \ref{String datum of GT} and \ref{String datum of J}.
\end{proof}

\subsection{Proof of Theorem \ref{Main Theorem} \eqref{Main Theorem 2} and \eqref{Main Theorem 3}}
Let $\Pi = (A_0,\ldots,A_u) \in \widetilde{\AP}(\lm)$, $J = \{ j_1,\ldots,j_N \} \in \clA(\Pi)$. Let $J'$ be an admissible subset that is obtained from a sequence of Yang-Baxter moves, and $\Etil_{i_k}^l \Etil_{i_{k-1}}^{\max} \cdots \Etil_{i_1}^{\max}(J')$ is an almost $\bfi_A$-decreasing subset for all $1 \leq k \leq N$ and $0 \leq l \leq \vep_{i_k}(\Etil_{i_{k-1}}^{\max} \cdots \Etil_{i_1}^{\max}(J'))$. For each $a < b$, let $j_{a,b} \in J'$ be such that $\beta_{j_{a,b}} = \eps_a-\eps_b$. Also, we write $N_{i,j} = N_{i,j}(J')$.

\begin{prop}
Set $\bfa(J') := (a_{i,j}(J'))_{1 \leq i \leq j \leq n}$, where $a_{i,j}(J') := \lm_i-N_{i,j}$. Then, $\bfa(J') \in \GT(\lm)$. Consequently, there exists a unique $J'' \in \clA(\Pi(\lm))$ such that $N(J') = N(J'')$.
\end{prop}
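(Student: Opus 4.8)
The plan is to verify the two defining inequalities of a Gelfand--Tsetlin pattern for $\bfa(J')$ by a direct count; only the fact that $J'$ is an almost $\bfi_A$-decreasing subset will enter. Since $N_{i,i}=0$ we have $a_{i,i}(J')=\lm_i$, so it remains to show, for all $1\le i<j\le n$,
$$
N_{i,j}-N_{i+1,j}\le \lm_i-\lm_{i+1} \AND N_{i,j}\ge N_{i,j-1},
$$
with the convention $N_{i,i}:=0$. The boundary cases are immediate: $N_{j-1,j}=N(j_{j-1,j})\le(\lm,\eps_{j-1}-\eps_j)=\lm_{j-1}-\lm_j$ by \eqref{l, N, and lm} (recall $l_i\le 0$), and $N_{i,i+1}\ge 0=N_{i,i}$.

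For the remaining cases I would fix $i<j$ and work with the relevant $\frsl_3$-triple of positive roots $\alpha,\beta,\gamma$ with $\gamma=\eps_i-\eps_j$, where $(\alpha,\beta)=(\eps_i-\eps_{i+1},\eps_{i+1}-\eps_j)$ if $j>i+1$ (for the first inequality) and $(\alpha,\beta)=(\eps_i-\eps_{j-1},\eps_{j-1}-\eps_j)$ if $j-1>i$ (for the second). In either case $\gamma^\vee=\alpha^\vee+\beta^\vee$, the three roots are pairwise non-orthogonal, and the explicit description of $<_{\bfi_A}$ gives $\alpha<_{\bfi_A}\gamma<_{\bfi_A}\beta$. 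By Proposition \ref{Characterization of extended chain} and Remark \ref{Remark on betaik 2}, the subsequence $(\beta_{p_1},\dots,\beta_{p_k})$ of $\Gamma(\Pi)$ consisting of the occurrences of $\alpha,\beta,\gamma$ has $k=2(\lm_i-\lm_j)+3$, with $\beta_{p_m}=\gamma$ for even $m$ and $\beta_{p_m}\in\{\alpha,\beta\}$ for odd $m$. Since $J'$ meets each positive root exactly once and $\alpha,\beta,\gamma$ are mutually non-orthogonal, the almost $\bfi_A$-decreasing property forces the three members of $J'$ whose roots are $\alpha,\beta,\gamma$ to sit in strictly decreasing $<_{\bfi_A}$-order, i.e. at positions $j_\beta<j_\gamma<j_\alpha$; write $j_\beta=p_{2a-1}$, $j_\gamma=p_{2b}$, $j_\alpha=p_{2c-1}$, so that $a\le b$ and $b<c$.

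All occurrences of $\gamma$ (resp. of $\alpha$, resp. of $\beta$) in $\Gamma(\Pi)$ lie in this subsequence, so we may count inside it. First, $N_{i,j}=N(j_\gamma)=\#\{m\text{ even}:m>2b\}=(\lm_i-\lm_j+1)-b$. For the first inequality, $N_{i+1,j}=N(j_\beta)=\#\{m\text{ odd}:m>2a-1,\ \beta_{p_m}=\beta\}\ge(\lm_{i+1}-\lm_j+1)-a$, since the $a$ odd slots $p_1,\dots,p_{2a-1}$ carry at most $a$ copies of $\beta$; with $a\le b$ this yields $N_{i,j}-N_{i+1,j}\le\lm_i-\lm_{i+1}$. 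For the second inequality, $N_{i,j-1}=N(j_\alpha)=\#\{m\text{ odd}:m>2c-1,\ \beta_{p_m}=\alpha\}=(\lm_i-\lm_{j-1}+1)-A$, where $A$ counts the $\alpha$'s among the $c$ odd slots $p_1,\dots,p_{2c-1}$; since those $c$ slots carry at most $\lm_{j-1}-\lm_j+1$ copies of $\beta$, we get $A\ge c-(\lm_{j-1}-\lm_j+1)$, hence $N_{i,j-1}\le(\lm_i-\lm_j+2)-c\le(\lm_i-\lm_j+1)-b=N_{i,j}$ using $b<c$. This proves $\bfa(J')\in\GT(\lm)$.

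The final assertion then follows from Proposition \ref{J(a)}: put $J'':=J(\bfa(J'))\in\clA(\Pi(\lm))$; then $N_{i,j}(J'')=\lm_i-a_{i,j}(J')=N_{i,j}$ for all $i<j$, so $N(J')=N(J'')$, and $J''$ is the unique such element because $J\mapsto\bfa(J)$ is a bijection from $\clA(\Pi(\lm))$ onto $\GT(\lm)$, so two elements of $\clA(\Pi(\lm))$ with the same $N$ coincide. The one point that requires care is the column inequality $N_{i,j}\ge N_{i,j-1}$: the naive bound $A\ge 1$ (from $j_\alpha$ itself being an $\alpha$) is too weak, and one must instead estimate the number of $\beta$'s — not $\alpha$'s — in an initial segment of the odd slots and combine this with the strict inequality $b<c$.
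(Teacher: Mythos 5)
Your proof is correct and follows essentially the same route as the paper: boundary bounds from $0\le N_{i,j}\le(\lm,\eps_i-\eps_j)$, then for each GT inequality an interlacing/counting argument inside the subsequence of an $\frsl_3$-triple $\alpha,\beta,\gamma$ (using Proposition \ref{Characterization of extended chain} and the ordering $j_\beta<j_\gamma<j_\alpha$ forced by almost $\bfi_A$-decreasingness), and finally Proposition \ref{J(a)} for existence and uniqueness of $J''$. Your explicit parity count is just a detailed version of the step the paper dismisses as "easy to see" ($-l_{a,c}\ge -l_{b,c}$ and $N_{a,c}\ge N_{a,b}$ for the triple $(a,b,c)$), so no substantive difference.
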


\begin{proof}
Let $1 \leq i < j \leq n$. Since the number of occurrences of $\eps_i-\eps_j$ in $\Gamma(\Pi)$ is equal to $\lm_{i}-\lm_j+1$, we have $0 \leq N_{i,j} \leq \lm_{i}-\lm_j$. This implies that
$$
\lm_j \leq \lm_i - N_{i,j} \leq \lm_i.
$$

Let $1 \leq a < b < c \leq n$. Since $(a,b) <_{\bfi_A} (a,c) <_{\bfi_A} (b,c)$ and $(\beta_{a,b},\beta_{a,c}), (\beta_{a,c},\beta_{b,c}) \neq 0$, we have $j_{b,c} < j_{a,c} < j_{a,b}$. From Proposition \ref{Characterization of extended chain} and equation \eqref{l, N, and lm}, it is easy to see that
$$
-l_{a,c} \geq -l_{b,c}, \AND N_{a,c} \geq N_{a,b}.
$$
These inequalities imply that
$$
\lm_b - N_{b,c} \leq \lm_a - N_{a,c} \leq \lm_a - N_{a,b}.
$$
Then, for each $1 \leq k \leq n$ and $1 \leq i < j \leq n$, we have
$$
\lm_k - N_{k,k} = \lm_k,
$$
and
$$
\lm_{i+1} - N_{i+1,j} \leq \lm_i - N_{i,j} \leq \lm_i - N_{i,j-1}.
$$
This shows that $\bfa(J') = (\lm_i - N_{i,j})_{1 \leq i \leq j \leq n}$ is a Gelfand-Tsetlin pattern of shape $\lm$, as desired.
\end{proof}

\begin{prop}\label{String datum of J for general Pi}
Let $J'' \in \clA(\Pi(\lm))$ be such that $N(J') = N(J'')$. Then, we have $\str_{\bfi_A}(J') = \str_{\bfi_A}(J'')$.
\end{prop}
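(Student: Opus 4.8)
The plan is to reduce the statement to the formula of Proposition~\ref{String datum of J}, by checking that that formula is valid for $J'$ exactly as it is for elements of $\clA(\Pi(\lm))$. Concretely, I would prove that $\str_{\bfi_A}(J') = (d_{a,b}(J'))_{1 \leq a < b \leq n}$ with
\[
d_{a,b}(J') = \sum_{m=1}^{b-a} \left( N_{m,\,m+n-b+1}(J') - N_{m,\,m+n-b}(J') \right).
\]
Granting this, Proposition~\ref{String datum of J} applied to $J'' \in \clA(\Pi(\lm))$ yields the same expression with $N(J')$ replaced by $N(J'')$; since $N(J') = N(J'')$ by hypothesis, the two string data coincide, which is the assertion.

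To obtain the displayed formula for $J'$, I would re-run the proof of Proposition~\ref{String datum of J} verbatim. Writing $\bfi_A = (i_1,\ldots,i_N)$, set $J'^{(1)} := J'$ and $J'^{(k)} := \Etil_{i_{k-1}}^{\max}(J'^{(k-1)})$ for $2 \leq k \leq N$. The hypothesis imposed on $J'$ says precisely that every $J'^{(k)}$, and every intermediate subset $\Etil_{i_k}^{l}(J'^{(k)})$ occurring while forming these maximal strings, is an almost $\bfi_A$-decreasing subset of $\Pi$; hence Proposition~\ref{Properties of almost decreasing subset}, Proposition~\ref{Properties of Ep(J)}, and Corollary~\ref{Properties of extended alcove paths model} apply to all of them exactly as they did over $\clA(\Pi(\lm))$. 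The only ingredient of the proof of Proposition~\ref{String datum of J} that was special to $\Pi(\lm)$ is Corollary~\ref{aij and nij} (invoked inside Lemma~\ref{vep(J)}), i.e.\ the Gelfand--Tsetlin inequalities for $N(\,\cdot\,)$, in particular $N_{i,j} \geq N_{i,j-1}$. But the proposition immediately preceding the present one establishes $\bfa(K) \in \GT(\lm)$ for an arbitrary almost $\bfi_A$-decreasing subset $K$ of any path in $\widetilde{\AP}(\lm)$, hence for each $J'^{(k)}$, which supplies exactly these inequalities. Therefore Lemma~\ref{vep(J)} is available for every $J'^{(k)}$, and the inductive proof of the identity~\eqref{Claim}---which expresses $N(J'^{(k)})$ in terms of $N(J')$---together with the resulting evaluation $d_{p-q+1,p+1}(J') = \vep_q(J'^{(k_p+p-q)})$ goes through unchanged.

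I expect the only delicate point to be verifying that the hypothesis of Lemma~\ref{vep(J)} is met at each stage: when reading off $\vep_q(J'^{(k)})$ one needs the stabilization $N_{c,d}(J'^{(k)}) = N_{c,\,c+n-q}(J'^{(k)})$ for all $d-c \geq n-q$. As in the original argument this is not an additional assumption but a consequence of the inductive hypothesis~\eqref{Claim} at the index $k$, so the induction stays self-contained. Beyond that the work is purely bookkeeping: confirming that none of the auxiliary lemmas used in Proposition~\ref{String datum of J} relied on $\Pi = \Pi(\lm)$ beyond the almost $\bfi_A$-decreasing property and the Gelfand--Tsetlin inequalities, both of which are now in hand for $J'$ and all of its iterated $\Etil^{\max}$-images.
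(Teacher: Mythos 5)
Your proposal is correct and follows essentially the same route as the paper: the paper's proof simply observes that the computation in Proposition~\ref{String datum of J} never used $J''\in\clA(\Pi(\lm))$ beyond the almost $\bfi_A$-decreasing property of all iterated $\Etil^{\max}$-images, so the same formula expresses $\str_{\bfi_A}(J')$ in terms of $N(J')$, and $N(J')=N(J'')$ finishes the argument. Your extra care in replacing Corollary~\ref{aij and nij} by the preceding proposition (which gives the Gelfand--Tsetlin inequalities for any almost $\bfi_A$-decreasing subset) makes explicit a point the paper leaves implicit, but it is the same proof.
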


\begin{proof}
In Proposition \ref{String datum of J}, we described $\str_{\bfi_A}(J'')$ in terms of $N(J'')$. During the calculation, we used the facts that $\Etil_{i_k}^l \Etil_{i_{k-1}}^{\max} \cdots \Etil_{i_1}^{\max}(J'')$ is an almost $\bfi_A$-decreasing subset for all $1 \leq k \leq N$ and $0 \leq l \leq \eps_{i_k}(\Etil_{i_{k-1}}^{\max} \cdots \Etil_{i_1}^{\max}(J''))$, but did not use the fact that $J'' \in \clA(\Pi(\lm))$. Therefore, we can describe $\str_{\bfi_A}(J')$ in terms of $N(J')$ in exactly the same way as $\str_{\bfi_A}(J'')$. Since $N(J') = N(J'')$, we conclude that $\str_{\bfi_A}(J') = \str_{\bfi_A}(J'')$.
\end{proof}

Now, let us prove Theorem \ref{Main Theorem} \eqref{Main Theorem 2}. Recall that $J'$ is obtained from $J$ by a sequence of Yang-Baxter moves, which is a crystal isomorphism. Hence, we have
$$
\str_{\bfi_A}(J) = \str_{\bfi_A}(J') = \str_{\bfi_A}(J'').
$$
On the other hand, $\str_{\bfi_A}(J'')$ determines $N(J'')$ since the assignment $\tilde{J} \mapsto N(\tilde{J})$, $\tilde{J} \in \clA(\Pi(\lm))$ is injective. Therefore, $N(J')(=N(J''))$ is determined by $\str_{\bfi_A}(J)(=\str_{\bfi_A}(J''))$, which is independent of the choices of $J'$. Thus, Theorem \ref{Main Theorem} \eqref{Main Theorem 2} follows.

Finally, we prove Theorem \ref{Main Theorem} \eqref{Main Theorem 3}. The assignment $J \mapsto J''$ above is an isomorphism $\clA(\Pi) \rightarrow \clA(\Pi(\lm))$ by Lemma \ref{sufficient condition on isomorphisms}. On the other hand, we have an isomorphism $\clA(\Pi(\lm)) \rightarrow \GT(\lm)$ of crystals which sends $J''$ to $\bfa(J'')$. Since $N(J) = N(J') = N(J'')$, we have $\bfa(J) = \bfa(J') = \bfa(J'')$. Therefore, the assignment $J \mapsto \bfa(J)$ is a composite of two isomorphisms of crystals, and hence, an isomorphism. This proves Theorem \ref{Main Theorem} \eqref{Main Theorem 3}.

\end{document}